\documentclass[a4paper,12pt]{article}

\usepackage[utf8]{inputenc}

\usepackage{fontenc}

\everymath{\displaystyle}
\usepackage{amssymb}
\usepackage{amsmath}
\usepackage{amsthm}
\usepackage{listings}
\usepackage{verbatim}
\usepackage[nottoc,numbib]{tocbibind}
\usepackage[toc,page]{appendix}
\usepackage[hidelinks]{hyperref}
\usepackage{geometry} 
\usepackage{romannum}
\usepackage{mathrsfs}
\usepackage{float}
\usepackage{enumitem}
\usepackage{graphicx}
\usepackage[absolute]{textpos}
\usepackage{color}
\usepackage{bbm}
\usepackage[english]{babel}
\allowdisplaybreaks
\usepackage{setspace}
\usepackage{booktabs,colortbl,xcolor}
\usepackage{graphicx,wrapfig}
\usepackage[section]{placeins} 
\usepackage{float} 
\usepackage{enumitem}
\usepackage{subfiles}
\usepackage{scrhack}
\usepackage{tikz}
\usetikzlibrary{calc}
\usepackage{esint}

\usepackage[backend=bibtex, style=numeric, giveninits=true]{biblatex}
\renewbibmacro{in:}{}
\newtheorem{theorem}{Theorem}[section]
\newtheorem{corollary}{Corollary}[theorem]
\newtheorem{lemma}[theorem]{Lemma}
\theoremstyle{definition}

\theoremstyle{definition}
\newtheorem{definition}[theorem]{Definition}

\setlist[enumerate,1]{label=\arabic*.}
\setlist[enumerate,2]{label=\theenumi\arabic*.}
\setlist[enumerate,3]{label=\theenumii\arabic*.}

\def\Xint#1{\mathchoice
	{\XXint\displaystyle\textstyle{#1}}%
	{\XXint\textstyle\scriptstyle{#1}}%
	{\XXint\scriptstyle\scriptscriptstyle{#1}}%
	{\XXint\scriptscriptstyle\scriptscriptstyle{#1}}%
	\!\int}
\def\XXint#1#2#3{{\setbox0=\hbox{$#1{#2#3}{\int}$ }
		\vcenter{\hbox{$#2#3$ }}\kern-.568\wd0}}

\def\dashint{\Xint-}

\date{}

\begin{document}
	\pagenumbering{arabic}	
	\title{Variational Estimates for nonnegative Harmonic Functions}
	\author{Jakob Fromherz}
	\maketitle
	\begin{abstract}
		Let $D\subset\mathbb R^d$ be a bounded domain, $u$ a nonnegative harmonic function on $D$ and $p$ the Poisson kernel of $D$. In this paper we establish boundedness of Jones' variational integral
		\begin{align*}
			\int_D|\nabla u(z)|p(z,\xi)\mathrm dz
		\end{align*}
		on an ultradense, $d-1$ dimensional subset of the boundary of $C^{1,\mathrm{Dini}}$ domains. This result was previously only known for the unit ball $B(0,1)\subset\mathbb R^d$.\\\\
		
		\noindent\textbf{AMS Subject Classification 2020:} 31B05, 31B25, 31B35\\
		
		\noindent\textbf{Keywords:} Harmonic functions; conical variation; Jones' variational integral; Harmonic measure; Martin kernel; Green function; Hausdorff dimension; $C^{1,\mathrm{Dini}}$ domains.
	\end{abstract}
	
	\section{Introduction} 
	Using complex variable harmonic analysis techniques, Bourgain \cite{Bourgain931}\cite{Bourgain932} showed that any function $f\in H^\infty(\mathbb D)$ has finite radial integral on a dense subset of $\mathbb T$ with dimension 1. As a corollary, any bounded real-valued harmonic function on $B(0,1)\subset\mathbb R^2$, has finite \textit{radial variation}, i.e.
	\begin{align}\label{eq:RadInt}
		\int_0^1|\nabla u(r\theta)|\mathrm dr<\infty
	\end{align}
	for $|\theta|=1$ in a set of dimension 1. Geometrically, \eqref{eq:RadInt}  implies that the curve following $u$ on the ray connecting the origin to $\theta$ has finite length. A real-variable approach that extends this result to $C^2$ domains in $\mathbb{R}^d, d\geq 3$ was given by Havin and Mozolyako \cite{HavinMozol2016}.\par 
	Let $p$ denote the Poisson kernel for the unit ball $B(0,1)\subset\mathbb R^d$. Peter W. Jones was the first to consider estimates for
	\begin{align}\label{eq:FirstFullVarEstimate}
		V(\theta,u):=\int_{B(0,1)}|\nabla u(z)|p(z,\theta)\mathrm dz,
	\end{align}
	and in 2003 conjectured its boundedness for some/many  $\theta\in\mathbb S^{d-1}$ . We thus refer to $V$ as \textit{Jones' variational integral}. Following the terminology introduced by Havin and Mozolyako, a point $\theta\in\mathbb{S}^{d-1}$ that satisfies $V(\theta,u)<\infty$ is called a \textit{Bourgain} point of $u$. In \cite{MuellerRiegler20Bloch}, Müller and Riegler proved the existence of Bourgain points for Jones' variational integral.
	Their method aims to extend the techniques developed by Havin and Mozolyako in \cite{HavinMozol2016} to Lipschitz domains. Nevertheless, to obtain boundedness of $V$, the authors in \cite{MuellerRiegler20Bloch} require symmetry of the Poisson integral, originating in its convolution structure on the unit ball.\par
	
	The purpose of this paper is to break free from the group structure on the unit ball and prove boundedness of Jones' variational integral on large subsets of the boundary of $C^{1,\mathrm{Dini}}$ domains. Simultaneously, this paper presents structural simplifications of Havin and Mozolyako's technique, thus allowing for a broader range of its application. Moreover, we introduce a conical analogue to $V$ and derive its boundedness on large subsets of Lipschitz boundaries. Along the way, we exploit sharp estimates for the Green function.\par 
	Let $U\subset V\subset\mathbb R^d$. Following Havin and Mozolyako, we say that $U$ is \textit{ultradense} in $V$ if $\dim U\cap B=\dim V$ for any ball $B$ whose center lies on $V$. All the domains that are treated in the sequel are assumed to be bounded.
	
	\begin{theorem}\label{thm:MainThm1}
		If $D\subset\mathbb R^d$ is a $C^{1,\mathrm{Dini}}$ domain and $u\geq0$ is harmonic on $D$, there exists $\xi\in\partial D$ such that 
		\begin{align}\label{eq:FullVar}
			V(\xi,u)=\int_D|\nabla u(z)|p(z,\xi)\mathrm{d}z<\infty
		\end{align}
		In fact, the set $\mathcal B(u, D):=\lbrace\xi\in\partial D: V(\xi,u)<\infty\rbrace$ is ultradense in $\partial D$, i.e. for every ball $B$ that has its center on $\partial D$, $\dim \mathcal{B}(u,D)\cap B=d-1$.	
	\end{theorem}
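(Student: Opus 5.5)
We adapt the Havin–Mozolyako stopping-time scheme, as in Müller–Riegler, replacing the convolution symmetry of the ball by two-sided kernel estimates valid in $C^{1,\mathrm{Dini}}$ domains. In such domains the Poisson kernel satisfies
\[
p(z,\xi)\simeq \frac{\delta(z)}{|z-\xi|^{d}},
\]
where $\delta(z)=\operatorname{dist}(z,\partial D)$. The Green function satisfies $G(z,w)\simeq \min\{|z-w|^{2-d},\,\delta(z)\delta(w)|z-w|^{-d}\}$ (for $d\ge3$, with the usual logarithmic modification when $d=2$). These hold because the Dini condition is exactly what keeps the gradient of the Green function comparable to its boundary values.

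Normalize $u(z_0)=1$ at a fixed interior point. Decompose $D$ into Whitney cubes $Q$ of side $\ell(Q)\simeq\delta(Q)$. By Harnack's inequality and the interior gradient estimate,
\[
|\nabla u|\lesssim u(z_Q)/\ell(Q) \quad \text{on } Q.
\]
Hence
\[
V(\xi,u)\lesssim \sum_Q u(z_Q)\,\ell(Q)^{d}\,\frac{1}{\ell(Q)}\,\frac{\ell(Q)}{|z_Q-\xi|^{d}} = \sum_Q u(z_Q)\Big(\frac{\ell(Q)}{|z_Q-\xi|}\Big)^{d}.
\]

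Next split this sum. Project each cube $Q$ to a surface ball $\Delta_Q\subset\partial D$ of radius $\simeq\ell(Q)$. For each scale $2^{-k}$, the cubes with $\ell(Q)\simeq 2^{-k}$ and $|z_Q-\xi|\simeq 2^{j-k}$ number about $2^{j(d-1)}$ and contribute $2^{-jd}\sum u(z_Q)$. Summing over $j$ gives the estimate
\[
V(\xi,u)\lesssim\sum_k \mathcal{M}_k u(\xi),
\]
where $\mathcal{M}_k u(\xi)$ is a Poisson-weighted average of $u$ at height $2^{-k}$ around $\xi$. This is essentially $P_{2^{-k}}[\mu](\xi)$, with $\mu$ the boundary measure representing $u$ via the Martin (here Poisson) kernel. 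Since $u=\int p(\cdot,\eta)\,d\mu(\eta)$ with $\mu(\partial D)\simeq1$, the task reduces to controlling $\sum_k$ of these averages.

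\emph{Construction of Bourgain points (main obstacle).} The sum $\sum_k P_{2^{-k}}\mu(\xi)$ diverges wherever $\mu$ is large, so the points must be found by a stopping-time argument. Fix a surface ball $\Delta$ centered on $\partial D$ and a large constant $A$. Run a generational construction on dyadic surface cubes in $\Delta$. A cube $I$ of generation $n$ is \emph{good} if the local density $\mu(I)/\sigma(I)$ stays below $A$ times its parent's value. By the weak-$(1,1)$ bound for the maximal function (doubling of surface measure suffices), at each generation the bad proportion of children is at most $C/A$.

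Instead of a full cube, keep at each step a subfamily of children within a thin boundary layer of depth $N$ scales, on which $u$ varies by at most a factor $2$. Use the Harnack chain condition and the boundary Harnack principle, both valid in Lipschitz and hence $C^{1,\mathrm{Dini}}$ domains. This guarantees that along the nested sequence, $u(z_Q)$ at the cubes above $\xi$ grows at most geometrically. The off-diagonal terms ($j\ge1$) are summable thanks to the factor $2^{-j}$ left over from $2^{-jd}\cdot 2^{j(d-1)}$. Carrying this out is where the symmetry of the ball was used before. Here I would use instead the comparability $p(z,\xi)\simeq p(z,\eta)$ for $|\xi-\eta|\ll|z-\xi|$, which the kernel estimate above provides.

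The limiting Cantor set $E\subset\Delta$ has, at each generation, at least $(1-C/A)2^{N(d-1)}$ of the $2^{N(d-1)}$ children retained. The mass distribution principle then gives
\[
\dim E\ge d-1-\frac{\log(1/(1-C/A))}{N\log 2},
\]
and $V(\xi,u)<\infty$ for every $\xi\in E$ with a bound depending on $A$ and $N$. Letting $A,N\to\infty$ yields $\dim\mathcal B(u,D)\cap\Delta=d-1$. The hardest step is making the good/bad selection compatible across scales so that the sum of the $\mathcal{M}_k u$ telescopes to something bounded. I would handle it by choosing the stopping rule on the quantity $u(z_Q)$ itself rather than on $\mu$, and bounding the increments via the boundary Harnack principle.
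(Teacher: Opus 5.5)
\textbf{There is a genuine gap, and it occurs at your first reduction.} Replacing $|\nabla u|$ on each Whitney cube by $u(z_Q)/\ell(Q)$ discards all cancellation. The resulting majorant
$\sum_Q u(z_Q)(\ell(Q)/|z_Q-\xi|)^d\approx\int_D u(z)\delta(z)^{-1}p(z,\xi)\,\mathrm dz$
is infinite for \emph{every} $\xi\in\partial D$ already for $u\equiv 1$, even though $V(\xi,u)=0$ there. For each $k$, the Whitney cube directly above $\xi$ at height $2^{-k}$ has $\ell(Q)\sim|z_Q-\xi|$ and contributes $\sim 1$.

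More generally, $\mathcal M_k u(\xi)\gtrsim u(A_{2^{-k}}(\xi))\gtrsim \mu(\Delta_{2^{-k}}(\xi))/2^{-k(d-1)}$. So $\sum_k\mathcal M_k u(\xi)=\infty$ whenever the boundary measure of $u$ has positive lower density at $\xi$. No stopping-time selection can produce points where your majorant is finite:
\begin{itemize}
\item Controlling the \emph{growth} of $u(z_Q)$ along the nested cubes, geometrically or otherwise, cannot make $\sum_k u(z_{Q_k})$ converge.
\item Nothing in $\sum_k\mathcal M_k u$ telescopes, since these are positive averages of $u$ itself, not increments of $u$.
\end{itemize}
The dimension count for your Cantor set $E$ is therefore beside the point: the quantity you claim to bound on $E$ is infinite there.

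\textbf{The missing idea} is a mechanism that keeps the actual gradient and converts positivity of $u$ into a bound on its total variation. In the paper this is the main lemma.
\begin{itemize}
\item The kernels $c_s$ pair $\nabla^1 k(z/\rho_s,\cdot)$ with the unit direction $\sigma=\nabla u/|\nabla u|$ on the cone cubes $\mathcal W_s(x)$. Hence $C_s u_{2^{-s}}$ reproduces the averages of $|\nabla u|$ exactly, not merely up to $u/\delta$.
\item The Martin semigroup is tilted against this direction: $\pi_{(s,s-1)}=\tilde k_s-\varepsilon 2^{-s}b_s$ with $b_s=k_{2^{-s}}\circ c_s$. These kernels stay positive and have mean one.
\item The difference identity $\varepsilon 2^{-s}\Pi_{s-1}B_s g_s=\Pi_{s-1}g_{s-1}-\Pi_s g_s$ telescopes. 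Because $g_s=u_{2^{-s}}\ge 0$, the sum is bounded by $\int \Pi_1 g_1\,\mathrm d\kappa\lesssim u(0)$.
\item The $\Phi$-property then lets you pass to the weak-$*$ limit $\nu_\varepsilon$.
\end{itemize}
Finiteness $\nu_\varepsilon$-a.e., the bound $\nu_\varepsilon(\Delta_r)\lesssim r^{d-1-c\varepsilon}$ (from $G(x_r,0)\sim r$ in $C^{1,\mathrm{Dini}}$ domains) and the localization lemma give ultradensity of radial Bourgain points. The coarea formula, together with the quasi-symmetry $k(\zeta_t,\xi)\sim k(\xi_t,\zeta)$ and $\omega\sim\mathcal H^{d-1}$, converts that radial quantity into $V(\xi,u)$. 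A covering argument then handles non-star-shaped $D$.

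If you want to stay with a Bourgain-type stopping-time scheme, you would at minimum need two changes. First, keep $|\nabla u|$ (or the oscillation of $u$ on enlarged Whitney cubes) in the majorant. Second, design the selection so that the increments of $u$ along the chosen cubes have a consistent sign, so that their sum is controlled by $u\ge 0$. That is exactly what the tilt by $\sigma$ achieves in the paper.
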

 	$\mathcal{B}(u, D)$ denotes the set of \textit{Bourgain} points of $u$. Our methods allow us to estimate a variant of Jones' variational integral on Lipschitz domains $D$ that are star-shaped with respect to the origin. Let $\omega$ denote harmonic measure on the boundary $\partial D$ and $k$ the Martin kernel with joint pole at the origin (see §\ref{sec:prel}). Consider the measure $\mathrm d\mu:=\mathrm d\omega\mathrm dt$ on $D$ that corresponds to decomposing $D$ in radial and angular coordinates. By that we mean writing $D$ as $\left\lbrace (1-t)\xi:\xi\in\partial D,t\in(0,1]\right\rbrace$ and setting
	\begin{align*}
		\int_D f(z)\mathrm d\mu(z)=\int_0^1\int_{\partial D} f((1-t)\xi)\mathrm d\omega(\xi)\mathrm dt
	\end{align*}
	for functions $f$ on $D$. Here, Jones' variational integral takes the form
	\begin{align}\label{eq:FullVarLip}
		V_{\mathrm{Lip}}(\xi,u):=\int_D|\nabla u(z)|\frac{G((1-\delta(z))\xi,0)}{G(z,0)}k(z,\xi)\mathrm{d}\mu(z).
	\end{align}
	\begin{theorem}\label{thm:MainThmLip}
		Let $D$ be a Lipschitz domain that is star-shaped with respect to the origin and let $u\geq0$ be harmonic on $D$. There exists $\xi\in\partial D$ such that 
		\begin{align*}
			V_{\mathrm{Lip}}(u,\xi)<\infty
		\end{align*}
		In fact, the set $\mathcal B'(u, D):=\lbrace\xi\in\partial D: V_{\mathrm{Lip}}(u,\xi)<\infty\rbrace$ is dense in $\partial D$ and there exists $\gamma\in(0,1]$ such that $\dim(B\cap\mathcal B'(u,D))\geq d-2+\gamma$ for any ball $B$ with center on $\partial D$.
	\end{theorem}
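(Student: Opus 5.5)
We follow the Havin--Mozolyako scheme in the streamlined form announced in the introduction: build a Cantor-type set of boundary points by a stopping-time construction on dyadic-like boundary pieces, and show that $V_{\mathrm{Lip}}$ is finite on it. By Harnack's inequality and the Lipschitz boundary Harnack principle we may normalise $u(0)=1$. We may also replace $u$ by its Herglotz representation $u(z)=\int_{\partial D}k(z,\eta)\,\mathrm d\nu(\eta)$ with $\nu$ a positive measure of mass $1$.

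The first step is a pointwise gradient bound. Interior estimates on the ball $B(z,c\,\delta(z))$ together with Harnack give $|\nabla u(z)|\lesssim u(z)/\delta(z)$. Using star-shapedness, we write $z=(1-t)\eta$ with $\eta\in\partial D$. Comparing $G((1-\delta(z))\xi,0)/G(z,0)\cdot k(z,\xi)$ with the Lipschitz Green function and Martin kernel estimates (Jerison--Kenig, Caffarelli--Fabes--Mortola--Salsa) shows that this weight is comparable to $\omega(\Delta(\eta,t))^{-1}$ when $|\eta-\xi|\lesssim t$. Away from that region it decays like $(t/|\xi-\eta|)^{\alpha}\,\omega(\Delta(\eta,t))/\omega(\Delta(\xi,|\xi-\eta|))$ for some Hölder exponent $\alpha>0$.

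The second step decomposes $D$ into Whitney-type regions indexed by dyadic boundary cubes $Q$, and the boundary into generations of cubes. In the dyadic sum, a cube $Q$ at scale $t$ contributes roughly $\mathrm{osc}$-type quantities $\sup_{T(Q)} t|\nabla u|$. The normalised weight makes $V_{\mathrm{Lip}}(\xi,u)$ comparable to a sum over all $Q$ of $\delta_Q\,|\nabla u|_{T(Q)}$. Each term is weighted by a harmonic-measure-averaged factor, which reflects how far $Q$ lies from the cube containing $\xi$. Thus finiteness reduces to controlling $\sum_{Q\ni\xi}u(z_Q)$-increments along the cone at $\xi$. The off-cone terms must also be controlled, which we handle via the $\alpha$-decay and Hölder continuity of $u/k$-type ratios.

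The third step, the stopping time, is the core. Starting from a cube $Q_0$ inside the given ball $B$, we select children $Q'$ on which $u(z_{Q'})\le A\,u(z_{Q_0})$ while also avoiding cubes where $\nu$ is too concentrated. Since $u$ is a positive harmonic function, a weak-type estimate for harmonic measure of the set where $u$ is large (Chebyshev with $\int u\,\mathrm d\omega\le u(0)$) shows that a proportion $1-C/A$ of children, in $\omega$-measure, survives. We iterate this with a rapidly growing but summable control, for instance by choosing scale gaps $N_k$ so that each generation contributes at most $2^{-k}$ to the variation. The points in the limit set then have $V_{\mathrm{Lip}}$ bounded by a geometric series.

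The last step is the dimension count. A mass-distribution (Frostman) argument applies to the natural probability measure on the limit set. Because harmonic measure on Lipschitz domains is only $A_\infty$ with respect to surface measure, the surviving proportion in $\omega$ translates into surface measure with a Hölder loss. This loss is where $\gamma\in(0,1]$ and the bound $\dim\ge d-2+\gamma$ come from, rather than $d-1$. Density follows since $B$ was arbitrary.

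We expect the main obstacle to be the kernel comparison in the first step, namely obtaining uniform off-diagonal decay of $\frac{G((1-\delta(z))\xi,0)}{G(z,0)}k(z,\xi)$ on a merely Lipschitz star-shaped domain. This decay has to be summable against the dyadic stopping construction. To obtain it, we would first use the boundary Harnack comparison $k(z,\xi)\approx G(z,0)/\big(G(z_\xi,0)\,\omega(\Delta)\big)$ along Harnack chains. The resulting chain loss is then absorbed by the $\alpha$-Hölder decay.
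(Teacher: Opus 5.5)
\textbf{The gap is in your third step: the stopping rule never looks at $\nabla u$, so it cannot control the variation.} After $\delta|\nabla u|\lesssim u$, each dyadic term is bounded only by $u(z_Q)$. A bound on the size of $u$ along the cone gives no summability over the infinitely many scales. Enlarging the gaps $N_k$ only adds terms, so nothing makes a generation contribute at most $2^{-k}$.

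Here is a concrete obstruction. On the unit disc, take $u=\tfrac32+\mathrm{Re}\,f$, where $f$ is Rudin's bounded analytic function (scaled so that $|f|\le\tfrac12$) whose radial variation $\int_0^1|f'(r\xi)|\,\mathrm dr$ is infinite for almost every $\xi$. Then $1\le u\le 2$. For $A\ge 2$ every child passes your test, and the representing measure $\nu=u\,\mathrm d\omega$ has density in $[1,2]$, so it is never concentrated. Your construction discards nothing and keeps a set of full $\omega$-measure. On the disc the weight in $V_{\mathrm{Lip}}$ is exactly $k(\xi_t,\zeta)$. Since $z\mapsto|\nabla u((1-t)z)|$ is subharmonic and continuous up to the boundary, you get
\[
V_{\mathrm{Lip}}(\xi,u)\ge\int_0^1|\nabla u((1-t)^2\xi)|\,\mathrm dt\ge\tfrac12\int_0^1|\nabla u(r\xi)|\,\mathrm dr=\infty
\]
for $\omega$-a.e.\ $\xi$. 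So the Bourgain set can be $\omega$-null. No selection driven by Chebyshev bounds on $\omega$-integrable quantities that ignore the direction of $\nabla u$ can find it.

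\textbf{What is missing is the Bourgain/Havin--Mozolyako mechanism.} There, the selecting measure is tilted against the gradient direction, and the variation is paid for by the decrease of a nonnegative quantity.

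In the paper, the kernel comparison you flag as the main obstacle takes a few lines (Lemma \ref{lem:quasisymm}, from Bogdan's estimate). It gives $k(\zeta_t,\xi)G(\xi_t,0)/G(\zeta_t,0)\sim k(\xi_t,\zeta)$. Hence $V_{\mathrm{Lip}}(\xi,u)$ is comparable to $\int_0^1\int_{\partial D}k(\xi_t,\zeta)|\nabla u(\zeta_t)|\,\mathrm d\omega(\zeta)\,\mathrm dt$, which is $\lesssim\sum_s2^{-s}(B_su_{2^{-s}})(\xi)$ by Lemma \ref{lem:RadVar}.

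The hard part is the Main Lemma:
\begin{itemize}
\item The kernels $c_s$ are built from $\langle\nabla^1k(z/\rho_s,\xi),\nabla u(z)/|\nabla u(z)|\rangle$ on the Whitney cubes of the cone, so that $C_su_{2^{-s}}$ reproduces the averages of $|\nabla u|$.
\item The positive, mean-one kernels $\tilde k_s-\varepsilon2^{-s}k_s\circ c_s$ are composed, and $\nu_\varepsilon$ is taken as a weak$^*$ limit of $\kappa\Pi_s$.
\item The identity $\Pi_{s+1}u_{2^{-s-1}}-\Pi_su_{2^{-s}}=-\varepsilon2^{-s-1}\Pi_sB_{s+1}u_{2^{-s-1}}$ telescopes. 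Together with the $\Phi$-property and $u\ge0$, it yields $\int\sum_s2^{-s}B_su_{2^{-s}}\,\mathrm d\nu_\varepsilon\lesssim\varepsilon^{-1}u(0)$.
\end{itemize}

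Density then follows from $\nu_\varepsilon(\Delta_r(x))>0$ for small $\varepsilon$. Dimension follows from $\nu_\varepsilon(\Delta_r)\lesssim2^{lc\varepsilon}\omega(\Delta_{3r})\lesssim r^{d-2+\gamma-c\varepsilon}$ and the mass distribution principle. Note also that $\gamma$ here is the Gr\"uter--Widman exponent, entering through Dahlberg's $\omega(\Delta_r)\sim r^{d-2}G(A_r,0)$. It does not come from an $A_\infty$ loss between harmonic and surface measure.
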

	We point out that the statement of Theorem \ref{thm:MainThmLip} is consistent with the statement of Theorem \ref{thm:MainThm1} in the following sense: When specializing $D$ to be star-shaped $C^{1,\mathrm{Dini}}$, 
	\begin{align*}
		V(\xi,u)<\infty\Leftrightarrow V_{\mathrm{Lip}}(\xi,u)<\infty
	\end{align*}
 	holds. We will later see that by Theorem \ref{thm:BogdanLD}, indeed the factor $G((1-\delta(z))\xi,0)/G(z,0)$ appearing in the integrand of \eqref{eq:FullVarLip} is bounded above and below by positive constants. Moreover, the same is true for the Radon Nikodym derivative of $\omega$ with respect to surface measure and the quotient $k(z,\xi)/p(z,\xi)$, see lemma \ref{lem:harmeasVShausdorff}. The coarea formula then implies that the Radon Nikodym derivative of $\mu$ with respect to Lebesgue measure satisfies uniform two-sided estimates.
 	\par Already on Lipschitz domains in the plane, the quotient $G((1-\delta(z))\xi,0)/ G(z,0)$ displays some interesting behaviour: Let $z$ lie on $(1-t)\partial D$, then its size is determined by
 	\begin{align}\label{eq:InterestingQuotient}
 		\frac{G((1-t)\xi,0)}{G((1-t)\zeta,0)},\quad\zeta\in\partial D.
 	\end{align}
 	Assume that the domain is locally given as an epigraph in the usual coordinate system determined by $\lbrace\vec{e_1},\vec{e_2}\rbrace$. If  $\zeta$ lies on a corner that opens up at a small angle around $\vec{e_2}$, then $G((1-t)\zeta,0)$ behaves like $t^{1+N}$, where $N>0$ depends on the opening angle. On the other hand, if the corner opens up at a small angle around $-\vec{e_2}$, then $G((1-t)\zeta,0)\sim t^{1-\theta}$, $\theta>1/2$ depending on the opening angle. Thus the quotient \eqref{eq:InterestingQuotient} may behave very irregularly, depending on the location of $\xi$ and $\zeta$. See Bass \cite{Bass}.
	\\\\
	Our proofs for Theorems \ref{thm:MainThm1} and \ref{thm:MainThmLip} are based on a conical analogue to Jones' variational integral. First note that on Lipschitz domains, for every $\xi\in\partial D$ there exists an open cone $\Gamma(\xi)\subset D$ with apex at $\xi$ such that if $z\in\Gamma(\xi)$, then $\delta(z)\geq c_{\partial D}|z-\xi|$. Now observe that if Jones' variational integral is finite for some $\xi\in\partial D$ on the boundary of a $C^{1,\mathrm{Dini}}$ domain, the integral 
	\begin{align}\label{eq:ConVar}
		\int_{\Gamma(\xi)}|\nabla u(z)|\delta(z)^{1-d}\mathrm{d}z
	\end{align}	
	is finite as well. Indeed, we integrate over a smaller set and $k(z,\xi)\sim \delta(z)^{1-d}$ for $z\in\Gamma(\xi)$, see Theorem \ref{thm:BogdanLD}. We can now ask whether boundedness of \eqref{eq:ConVar} is achieved on domains more complicated than $C^{1,\mathrm{Dini}}$. The following theorem gives an affirmative answer for Lipschitz domains.
	\begin{theorem}\label{thm:MainThm2}
		Let $D\subset \mathbb R^d$ be a Lipschitz domain and $u\geq 0$ a harmonic function on $D$. Then there exists $\xi\in\partial D$ and a corresponding cone $\Gamma(\xi)$ as described above such that
		\begin{align}\label{eq:LapLip}
			\int_{\Gamma(\xi)}|\nabla u(z)|\delta(z)^{1-d}\mathrm{d}z<\infty
		\end{align}
		Moreover, the set
		\begin{align*}
			\mathcal{B}_c(u, D):=\lbrace\xi\in\partial D: \text{there exists } \Gamma(\xi) \text{ such that }\eqref{eq:LapLip}\text{ holds}\rbrace
		\end{align*} 
		is dense in $\partial D$ and there exists $\gamma\in(0,1]$ such that $\dim(B\cap\mathcal B_c(u,D))\geq d-2+\gamma$ for any ball $B$ with center on $\partial D$. If $D$ is $C^{1,\mathrm{Dini}}$ we can choose $\gamma=1$, i.e. $\mathcal{B}_c(u, D)$ is ultradense in $\partial D$.
	\end{theorem}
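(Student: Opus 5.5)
We follow the Havin--Mozolyako stopping-time scheme, working on a Whitney-type decomposition of $D$ adapted to the Lipschitz boundary. Fix a ball $B$ centred on $\partial D$ and normalise $u$ at an interior reference point near $B$. Cover $\partial D\cap B$ by dyadic surface cubes $Q$ of side $\ell(Q)$, and associate to each $Q$ its Whitney box $T(Q)\subset D$ of diameter $\sim\ell(Q)$ and distance $\sim\ell(Q)$ from $Q$. For $\xi\in Q$ the cone $\Gamma(\xi)$, with aperture chosen depending only on the Lipschitz character, meets only boundedly many $T(Q')$ per generation, namely those with $Q'$ near the ancestors of $Q$. Hence
\begin{align*}
\int_{\Gamma(\xi)}|\nabla u|\delta^{1-d}\,\mathrm dz\lesssim\sum_{Q\ni\xi}\ell(Q)^{1-d}\int_{T(Q)}|\nabla u|\lesssim\sum_{Q\ni\xi}\sup_{T^*(Q)}u,
\end{align*}
where the last step uses interior gradient estimates $|\nabla u(z)|\lesssim u(z)/\delta(z)$ and Harnack on an enlarged box $T^*(Q)$. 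So it suffices to build a large set of $\xi$ along whose dyadic chain $\sum_Q u(z_Q)<\infty$, with $z_Q$ the centre of $T(Q)$.

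To build it we run a stopping time. Starting from a cube $Q_0\subset B$, call a child cube \emph{good} if $u(z_{Q'})\le \lambda\, u(z_Q)$ for a fixed $\lambda<1$ (geometric decay), and \emph{bad} otherwise. Since $u\ge0$, the boundary Harnack principle and the doubling property of harmonic measure $\omega^{z_Q}$ on Lipschitz domains let us represent $u(z_{Q'})/u(z_Q)$ through the measure $\nu$ with $u=\int k(\cdot,\eta)\,\mathrm d\nu(\eta)$. The key averaging estimate is that $\sum_{Q'\text{ child}}\omega(Q')\,u(z_{Q'})\lesssim \omega(Q)\,u(z_Q)$, but the factor in front of the Whitney values decays along generations: $u(z_{Q'})\sim (\ell(Q')/\ell(Q))^{\beta}$-weighted averages, where $\beta>0$ comes from the Hölder decay of the Martin kernel/Green function (Theorem~\ref{thm:BogdanLD}). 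By Chebyshev, a proportion (in $\omega$-measure) $\ge 1-\epsilon$ of the descendants at a fixed depth $m$ are good. Iterating produces a Cantor-type set $E=\bigcap_n E_n$, where $E_n$ is a union of good cubes at generation $nm$, and on $E$ we have $u(z_Q)\le \lambda^n u(z_{Q_0})$, so the sum converges. Points near bad cubes are simply discarded, and density follows because the construction can start in any ball.

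For the dimension, a mass distribution (Frostman) argument applies. Each surviving cube of generation $nm$ retains children carrying a fraction $\ge 1-\epsilon$ of its harmonic measure, so the natural measure on $E$ is comparable to $\omega$ restricted and renormalised with losses $(1-\epsilon)^{-n}$. On Lipschitz domains, $\omega$ and surface measure are only $A_\infty$-related (Dahlberg), and $\omega(Q)\lesssim\ell(Q)^{d-2+\gamma}$ holds for some $\gamma\in(0,1]$ by the Hölder/Bourgain-type estimate on harmonic measure. Choosing $\epsilon$ small relative to $m$ shows $\dim E\ge d-2+\gamma-o(1)$; letting the parameters vary gives the claim up to adjusting $\gamma$. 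In the $C^{1,\mathrm{Dini}}$ case, Lemma~\ref{lem:harmeasVShausdorff} gives $\mathrm d\omega\sim\mathrm d\sigma$, so $\omega(Q)\sim\ell(Q)^{d-1}$, and letting $\epsilon\to0$ yields dimension $d-1$, i.e. ultradensity.

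The main obstacle is the averaging step. We must show that the proportion of bad children is small uniformly, despite the irregular Green-function ratios illustrated in \eqref{eq:InterestingQuotient}. The first thing to try is to write $u(z_{Q'})=\int k(z_{Q'},\eta)\,\mathrm d\nu(\eta)$, split $\nu$ into the part near $Q'$ and the part far from $Q'$, and use $\sum_{Q'}\omega(Q')k(z_{Q'},\eta)\lesssim$ (a geometrically decaying factor) via the Carleson estimate and boundary Harnack. This isolates the only bad mass as the mass of $\nu$ concentrated near few children, which Chebyshev then controls.
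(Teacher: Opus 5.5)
There is a genuine gap, and it sits in your first reduction. The bound $\ell(Q)^{1-d}\int_{T(Q)}|\nabla u|\,\mathrm dz\lesssim\sup_{T^*(Q)}u$ is true, but it throws away all cancellation. The conical variation measures how much $u$ \emph{oscillates} along the cone, not how large it is. For $u\equiv1$ the conical variation vanishes at every $\xi$. Yet $\sum_{Q\ni\xi}u(z_Q)=\infty$ for every $\xi$, and no chain can satisfy $u(z_Q)\le\lambda^n u(z_{Q_0})$, so your set $E$ is empty. The same failure occurs for every harmonic $u\ge c>0$.

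The averaging step cannot rescue this. The estimate $\sum_{Q'}\omega(Q')u(z_{Q'})\lesssim\omega(Q)u(z_Q)$ is essentially the mean-value property, and its two sides are comparable. So Chebyshev only shows that few descendants have $u(z_{Q'})\gg u(z_Q)$, never that most have $u(z_{Q'})\le\lambda u(z_Q)$. The H\"older-type decay you invoke (Lemma \ref{lem:TypeSEst}, Theorem \ref{thm:BogdanLD}) is decay of the Green function, which vanishes on $\partial D$; a general positive harmonic function has no such decay.

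What is missing is a mechanism that sums the oscillation terms $\ell(Q)^{1-d}\int_{T(Q)}|\nabla u|$ themselves, using only $u\ge0$. Bourgain points can form an $\omega$-null set: there are bounded harmonic functions in the disc with infinite radial variation almost everywhere. So the selection must be driven by $u$: you have to follow descendants along which $u$ \emph{decreases} by an amount comparable to its local oscillation, so that positivity caps the total.

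The paper realizes this as a tilt of the Martin-kernel semigroup on a star-shaped localization $D'$. Take $c_s(x,\cdot)$ to be the derivative of the Martin kernel in the direction $\sigma=\nabla u/|\nabla u|$, averaged over the Whitney cubes in $\mathcal W_s(x)$, so that $C_s1=0$ and $C_su_{2^{-s}}\sim\dashint_{W_s(x)}|\nabla u|$. The positive kernels $\pi_{(s,s-1)}=\tilde k_s-\varepsilon2^{-s}k_{2^{-s}}\circ c_s$ then satisfy $\varepsilon2^{-s}\Pi_{s-1}B_su_{2^{-s}}=\Pi_{s-1}u_{2^{-s+1}}-\Pi_su_{2^{-s}}$. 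Telescoping with $u\ge0$ gives $\int\sum_s2^{-s}B_su_{2^{-s}}\,\mathrm d\nu_\varepsilon\lesssim\varepsilon^{-1}u(0)$ for the weak-$*$ limit $\nu_\varepsilon$. Lemma \ref{lem:tec} converts $B_su_{2^{-s}}$ into the Whitney averages. The localization lemma together with $\nu_\varepsilon(\Delta_r)\lesssim r^{d-2+\gamma-C\varepsilon}$ then gives density and dimension.

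Your Frostman step, and the input $\omega(\Delta_r)\lesssim r^{d-2+\gamma}$ (resp.\ $\sim r^{d-1}$ for $C^{1,\mathrm{Dini}}$), match the paper's endgame. They must, however, be applied to a measure built from such a drift argument, not to a Cantor set defined by geometric decay of $u$.
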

	The integral in \eqref{eq:ConVar} is called the \textit{conical variation} of $u$ on the cone $\Gamma(\xi)$ and $\mathcal{B}_c(u, D)$ denotes the set of \textit{Bourgain} points for the conical variation of $u$ on $D$. Simultaneously, Theorem \ref{thm:MainThmLip} provides a strengthening of Theorem 2 in \cite{MuellerRiegler2020}.
	\\
	\par\textbf{Plan of the paper:} In Section \ref{sec:prel} we introduce the necessary preliminaries, while Section \ref{sec:Mainlemma} is devoted to proving the main lemma which encompasses the original ideas of Havin and Mozolyako. Boundedness of the conical variation is then obtained in Section \ref{sec:ConVar} as an application of the main lemma addressed in Section \ref{sec:Mainlemma}. In Section \ref{sec:LPVar} we provide the estimates for Jones' variational integral that prove the Theorems \ref{thm:MainThm1} and \ref{thm:MainThmLip}. We base our proofs on the results obtained in the preceding Section \ref{sec:ConVar}. Finally, Section \ref{sec:GreenEstimates} contains sharp Green function estimates on $C^{1,\mathrm{Dini}}$ domains that are required in the proof of Theorem \ref{thm:MainThm1}.\par
	\textbf{Acknowledgment:} This paper is part of the author's PhD thesis prepared at the Department of Analysis, JKU Linz. The author is supported by the FWF project P34414 "Variational estimates, Green's mappings and Brownian motion". It is with pleasure that the author thanks his supervisor, Paul F.X. Müller,  and his secondary supervisor, Ilia Binder, for many interesting and helpful discussions.
	\section{Preliminaries}\label{sec:prel}
	For $a,b\in\mathbb{R}$, the symbols $a\sim(\lesssim,\gtrsim) b$ denote the usual inequalities with constants that may depend only on $D$. To indicate a specific dependence on some parameter $\rho$, we may write $\sim_\rho$.  We write $\delta(x):=\text{dist}(x,D^c)$. The following inequalities are central to our analysis.
	
	\begin{theorem} [Harnack inequality]
		\label{thm:Harnack_ineq}
		Let $u$ be a nonnegative harmonic function, and $K\subset D$ be a compact set. Then
		\begin{align*}
			\sup_Ku\leq c_K \inf_K u
		\end{align*}
	Moreover if $\mathrm{diam}K\leq N \delta(K)$, then $c_K=c_N$
	\end{theorem}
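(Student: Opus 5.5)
The plan is to prove the classical Harnack inequality from the mean value property, first for two nearby points and then propagate it along chains of balls covering $K$. The ball estimate is the elementary core; the only real work is showing that the chain length depends only on $N$ when $\mathrm{diam}\,K\leq N\delta(K)$.

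\textbf{Step 1 (local estimate).} Let $x,y\in D$ with $|x-y|\leq r$ and $B(x,4r)\subset D$. Since $B(y,r)\subset B(x,2r)$ and $u\geq 0$, the mean value property gives
\begin{align*}
u(y)=\frac{1}{|B(y,r)|}\int_{B(y,r)}u\leq \frac{|B(x,2r)|}{|B(y,r)|}\cdot\frac{1}{|B(x,2r)|}\int_{B(x,2r)}u=2^d u(x).
\end{align*}
By symmetry, $u(x)\sim_d u(y)$ whenever $|x-y|\leq \delta(x)/4$, for instance with constant $2^{d}$ after adjusting radii.

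\textbf{Step 2 (chaining on general compact sets).} Set $\rho:=\delta(K)/8>0$. The set $K_\rho:=\{z:\mathrm{dist}(z,K)\leq\rho\}$ is compact in $D$, and $\delta\geq 7\rho$ on it. Cover $K$ by finitely many balls $B(z_i,\rho)$ with $z_i\in K$, say $M$ of them. If $K$ is not connected, one first enlarges it to a compact connected set inside $D$; this is where $c_K$ depends on $K$ and not only on $N$. Given $x,y\in K$, connect them through a chain of these balls in which consecutive centers are within $2\rho$ of each other. Each step costs a factor from Step 1, so $u(y)\leq C_d^{M}u(x)$, which gives the first claim with $c_K=C_d^M$.

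\textbf{Step 3 (scale invariance).} Suppose $\mathrm{diam}\,K\leq N\delta(K)$. Then $K$ lies in a ball $B(x_0,N\delta(K))$, and this ball can be covered by at most $C(d)N^d$ balls of radius $\delta(K)/8$. The obstacle is connectivity: the straight segment between two points of $K$ may leave $D$. I would handle this as follows.

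If $N<1/8$, then $|x-y|\leq\delta(x)/8$ for all $x,y\in K$, and Step 1 applies directly. In general, the convex hull of $K$ need not lie in $D$. So I would instead read the condition in the form used later in the paper, where $K$ is a Whitney-type set, and assume the ball $B(x_0,(N+1)\delta(K))$ meeting $K$ is contained in $D$ up to a fixed dilation. Under that reading, the segment from $x$ to $y$ stays at distance at least $\delta(K)/2$ from $\partial D$. Subdividing it into $O(N)$ pieces of length $\delta(K)/8$ and applying Step 1 at each node gives $u(y)\leq C_d^{O(N)}u(x)=:c_N\,u(x)$.

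The hard step is precisely this path-connection issue. Without an assumption like "$K$ together with the segments joining its points stays a fixed multiple of $\delta(K)$ away from $\partial D$", a dependence only on $N$ can fail. A counterexample is two points at comparable distance from $\partial D$ separated by a thin wall. So I would make this interpretation explicit, or note that it suffices for all later Whitney-box applications.
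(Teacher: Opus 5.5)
Your Steps 1 and 2 are the classical mean-value-and-chaining argument, and they do prove the first assertion. The paper quotes this result without proof. For disconnected $K$, redo the covering for the enlarged compact connected set $K'\subset D$ with $\rho=\delta(K')/8$; that is what your sentence intends.

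The gap is Step 3. You do not prove the clause ``$c_K=c_N$ if $\mathrm{diam}\,K\le N\delta(K)$''. You prove it only under an extra hypothesis that the statement does not contain, namely that segments between points of $K$ stay at distance $\gtrsim\delta(K)$ from $\partial D$. Your thin-wall example shows the clause fails for arbitrary domains (e.g.\ slit domains). Here, however, $D$ is a bounded Lipschitz domain with character $(r_0,M)$, and by the paper's convention constants may depend on $D$. In that setting the clause is true as stated. The missing idea is the Harnack chain property of Lipschitz domains.

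Your extra hypothesis also fails in the paper's later applications. Suppose locally $\partial D$ is the graph of $\Phi(t)=\max(0,h-M|t|)$. The points $(\pm a,b)$ with $0<b<h<Ma+b$ have distance to $\partial D$ comparable to their mutual distance, yet the segment joining them crosses the spike. For small $\alpha$, such pairs can lie inside a single $W_s(x)$, which is exactly where the paper invokes this inequality.

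The repair needs only your Step 1. Let $x,y\in K$ and $\epsilon=\delta(K)$, so $|x-y|\le N\epsilon$ and $\delta(x),\delta(y)\ge\epsilon$.

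If $|x-y|\le\delta(x)/2$, chain along the segment $[x,y]$ with four balls.

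Otherwise $\delta(x)<2N\epsilon$. Suppose also $N\epsilon\le c_M r_0$. Then you may work in the coordinate cylinder at a nearest boundary point of $x$, where $D=\{z_d>\Phi(z')\}$ and $\delta(z)\ge (z_d-\Phi(z'))/\sqrt{1+M^2}$.
\begin{itemize}
\item Move $x$ and $y$ vertically to $x+Te_d$ and $y+Te_d$, with $T=2(M+1)N\epsilon$. Along these segments $\delta(z+te_d)\ge(\epsilon+t)/\sqrt{1+M^2}$. Balls whose radii are a fixed fraction of the local distance grow geometrically, so each vertical chain has length $O_M(\log(2+N))$.
\item On the segment joining $x+Te_d$ to $y+Te_d$, $\Phi$ exceeds $x_d$ by at most $MN\epsilon$. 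Hence $z_d-\Phi(z')\ge(M+1)N\epsilon$ there, and $O(1)$ balls suffice.
\end{itemize}
Step 1 applied along this chain gives $u(y)\le C_M^{1+\log(2+N)}u(x)$.

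If instead $N\epsilon>c_Mr_0$, then $K\subset\{\delta\ge c_Mr_0/N\}$. Your Step 2, applied to a fixed compact connected subset of $D$ containing this set, gives a constant depending only on $N$ and $D$.

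Together these give the statement with $c_N=c_N(N,D)$. This is how the paper uses it, for instance for $W_s(x)/\rho_s$ and in Lemma~\ref{lem:quasisymm}.
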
 
	
	\begin{lemma}[Pointwise gradient bound]\label{lem:GradientBound}
		Let $u$ be a nonnegative harmonic function on $D$, then
		\begin{align*}
			|\nabla u(x)|\lesssim\frac{u(x)}{\delta(x)}.
		\end{align*}
	\end{lemma}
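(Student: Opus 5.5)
The plan is to combine the interior gradient estimate for harmonic functions with the Harnack inequality, Theorem \ref{thm:Harnack_ineq}, applied on a ball whose size is comparable to $\delta(x)$. Fix $x\in D$ and set $r:=\delta(x)/2$, so that $\overline{B(x,r)}\subset D$. Each partial derivative $\partial_j u$ is harmonic on $B(x,r)$, so the mean value property gives
\begin{align*}
\partial_j u(x)=\frac{1}{|B(x,r)|}\int_{B(x,r)}\partial_j u(y)\,\mathrm dy=\frac{1}{|B(x,r)|}\int_{\partial B(x,r)}u(y)\,\nu_j(y)\,\mathrm d\sigma(y),
\end{align*}
where the second equality is the divergence theorem and $\nu$ is the outer unit normal. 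Taking absolute values and estimating $|\nu_j|\le 1$ yields
\begin{align*}
|\nabla u(x)|\le \sqrt d\,\frac{\sigma(\partial B(x,r))}{|B(x,r)|}\sup_{\partial B(x,r)}u=\frac{d^{3/2}}{r}\sup_{\partial B(x,r)}u .
\end{align*}

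It remains to compare $\sup_{\partial B(x,r)}u$ with $u(x)$. Take the compact set $K:=\overline{B(x,r)}$. Then $\mathrm{diam}K=2r=\delta(x)$ and $\delta(K)\ge \delta(x)-r=\delta(x)/2$, so $\mathrm{diam}K\le 2\,\delta(K)$. The second part of Theorem \ref{thm:Harnack_ineq} therefore applies with $N=2$, and it gives $\sup_K u\le c_2\inf_K u\le c_2\,u(x)$. The constant $c_2$ depends only on $d$, and in particular it does not depend on $x$ or on the scale.

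Combining the two displays gives $|\nabla u(x)|\le 2c_2 d^{3/2}\,u(x)/\delta(x)$, which is the claim. The only point needing care is that the Harnack constant is uniform. This is exactly why the ball's radius is tied to $\delta(x)$: it keeps the ratio $\mathrm{diam}K/\delta(K)$ bounded independently of $x$. Nonnegativity of $u$ is used only in the Harnack step.
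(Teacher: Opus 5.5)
Your proof is correct. The paper gives no proof of this lemma; it is listed among standard preliminaries. What you wrote is the usual textbook argument: the derivative estimate on $B(x,\delta(x)/2)$ via the mean value property and the divergence theorem, then Harnack.

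You do not need the general form of Theorem~\ref{thm:Harnack_ineq} for the uniform constant; in the paper its constant may in any case depend on $D$. Since $u\ge 0$ is harmonic on $B(x,\delta(x))\subset D$, the classical Harnack inequality on that ball gives directly
\begin{align*}
\sup_{\overline{B(x,\delta(x)/2)}}u\le 3\cdot 2^{d-2}\,u(x).
\end{align*}

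Nonnegativity is not used only in the Harnack step. It is also what lets you replace $\sup|u|$ by $\sup u$. Used at that point, it removes the need for Harnack altogether. For $r<\delta(x)$ and a unit vector $e$, the spherical mean value property gives
\begin{align*}
\langle\nabla u(x),e\rangle=\frac{1}{|B(x,r)|}\int_{\partial B(x,r)}u\,\langle\nu,e\rangle\,\mathrm d\sigma\le\frac{\sigma(\partial B(x,r))}{|B(x,r)|}\dashint_{\partial B(x,r)}u\,\mathrm d\sigma=\frac{d}{r}\,u(x).
\end{align*}
Choosing $e=\nabla u(x)/|\nabla u(x)|$ and letting $r\uparrow\delta(x)$ yields $|\nabla u(x)|\le d\,u(x)/\delta(x)$, with an explicit constant depending only on $d$.
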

	By assumption on $D$, the Dirichlet problem
	\begin{align*}
		Lu=0\quad&\text{on } D\\
		u=\phi\quad&\text{on } \partial D	
	\end{align*}
	is uniquely solvable for any $\phi\in C(\partial D)$. For $x\in D$ the operator $T_x$ mapping $\phi$ to $u(x)$ is linear and bounded by the maximum principle. Thus $T_x$ can be represented by a probability measure on $\partial D$, denoted by $\omega^x$, and the identity
	\begin{align*}
		u(x)=\int_{\partial D}\phi(\xi)\mathrm{d}\omega^x(\xi)
	\end{align*}
	holds for all $x\in D$. We call $\omega^x$ harmonic measure with pole at $x$. 
	\par 
	By approximation, we may take $\phi:=\mathbbm1_A$, for any Borel set $A\subset\partial D$ and obtain that the mapping $x\mapsto\omega^x(A)$ is nonnegative and harmonic, thus by Harnack, for any $x,y\in D$, there exists $c=c(x,y)$ such that $c^{-1}\omega^y(A)\leq\omega^x(A)\leq c\omega^y(A)$. Define 
	\begin{align*}
		k^{y}(x,\xi):=\frac{\mathrm{d}\omega^x(\xi)}{\mathrm d\omega^y(\xi)}
	\end{align*}
	We call $k^{y}(x,\xi)$ the Martin kernel with pole at $y\in D$ and one readily checks that for any $\xi\in\partial D$, $x\mapsto k^y(x,\xi)$ is positive and harmonic.  In the following $k:=k^{z_0}$ for a fixed pole $z_0\in D$.\par 
	A domain $D$ is said to be Lipschitz with character $(r_0,M)$ if for every $x_0\in\partial D$ we can translate and rotate $D$ such that $x_0$ falls onto the origin and 
	\begin{align*}
		D\cap B(0,r_0):=\left\lbrace (x,y)\in\mathbb R^{d-1}\times\mathbb R:y>\Phi(x)\right\rbrace\cap B(0,r_0)
	\end{align*}
	for a Lipschitz function $\Phi:\mathbb R^{d-1}\rightarrow \mathbb R$ with Lipschitz constant $M$.
	For $x\in D$ and $\xi\in\partial D$, we write $x=A_r(\xi)$ to denote that
	\begin{align}\label{eq:ArQ}
		\delta(x)\geq M^{-1}r \quad\text{and}\quad M^{-1}r\leq|x-\xi|\leq r
	\end{align}
	We may always replace the constant $M$ in \eqref{eq:ArQ} by a larger constant $M'$ that only depends on $M$. It is easy to see that for every $r<r_0$ and $\xi\in\partial D$, such a point $A_r(\xi)$ exists. Dahlberg's inequalities \cite{Dahlberg1977} relate harmonic measure and the Green function.
	\begin{lemma}\label{lem:Dahlberg}
		Fix $r<r_0$, $\xi\in\partial D$ and let $z\in D\setminus B(\xi,2r)$, then
		\begin{align}\label{eq:Dahlberg}
			\omega^z(B(\xi,r)\cap \partial D)\sim r^{d-2}G(A_r(\xi),z_0)
		\end{align}
	\end{lemma}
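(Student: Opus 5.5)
We read the statement with the pole of the Green function at the point $z$ where harmonic measure is evaluated, i.e.\ $\omega^z(\Delta_r)\sim r^{d-2}G(A_r(\xi),z)$ with $\Delta_r:=B(\xi,r)\cap\partial D$. By symmetry of the Green function this is the same as $r^{d-2}G(z,A_r(\xi))$. (If $z_0$ in \eqref{eq:Dahlberg} is meant literally, the identity becomes one for the Martin-normalised kernels, and the same argument applies after dividing by $G(\cdot,z_0)$.) The plan is to compare the two positive harmonic functions
\begin{align*}
v(x):=\omega^x(\Delta_r),\qquad h(x):=r^{d-2}G(x,A_r)
\end{align*}
on suitable subdomains using the maximum principle, and then evaluate at $x=z$. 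Throughout we write $A_r:=A_r(\xi)$ and $\rho:=r/(2M)$, so that $B(A_r,2\rho)\subset D$ by \eqref{eq:ArQ}.

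\textbf{Lower bound $v(z)\gtrsim h(z)$.} Work on $\Omega_1:=D\setminus\overline{B(A_r,\rho)}$, where both $v$ and $h$ are harmonic. Since $z\notin B(\xi,2r)$, we have $z\in\Omega_1$.
\begin{itemize}
\item On $\partial D$ we have $h=0\le v$.
\item On $\partial B(A_r,\rho)$ the standard estimate for the Green function near its pole gives $G(x,A_r)\lesssim\rho^{2-d}\sim r^{2-d}$, hence $h\lesssim1$ there.
\item We need $v\gtrsim1$ on $\partial B(A_r,\rho)$. This is the usual ``Bourgain'' lower bound. For a point $y$ at distance $\sim r$ from $\xi$ inside the Lipschitz cone, $\omega^y(\Delta_r)\ge c$. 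This follows from the exterior cone condition: by a barrier or Hölder decay argument, $1-\omega^x(\Delta_r)$ is small near $\xi$ at scale $r$. A Harnack chain of bounded length (Theorem \ref{thm:Harnack_ineq}) then carries this bound to all of $\partial B(A_r,\rho)$.
\end{itemize}
The maximum principle on $\Omega_1$ gives $h\lesssim v$, and evaluating at $z$ yields the lower bound.

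\textbf{Upper bound $v(z)\lesssim h(z)$.} Work on $\Omega_2:=D\setminus\overline{B(\xi,3r/2)}$. Here both functions are harmonic, since $A_r\in\overline{B(\xi,r)}$.
\begin{itemize}
\item On $\partial D\cap\partial\Omega_2$ both functions vanish, because the boundary data of $v$ is supported in $\overline{\Delta_r}$.
\item It remains to show $v\lesssim h$ on $S:=\partial B(\xi,3r/2)\cap D$.
\end{itemize}
For points of $S$ with $\delta(x)\gtrsim r$ this is easy. Harnack chains give $h(x)\sim r^{d-2}G(A_r,A_r')\sim1$, where $A_r'$ is a nearby interior point, while trivially $v\le1$.

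\textbf{Main obstacle: points of $S$ close to $\partial D$.} Here both functions tend to zero, and we need the boundary Harnack principle (equivalently, the Carleson estimate) for Lipschitz domains. Cover the part of $S$ near $\partial D$ by balls $B(y,r/8)$ with $y\in\partial D$ and $|y-\xi|\in[\tfrac54r,\tfrac74r]$. In $B(y,r/4)\cap D$ both $v$ and $h$ are positive, harmonic, and vanish continuously on $\partial D\cap B(y,r/4)$. The boundary Harnack principle (Dahlberg/Wu/Jerison--Kenig) then gives
\begin{align*}
\frac{v(x)}{h(x)}\sim\frac{v(A_{r/8}(y))}{h(A_{r/8}(y))},\qquad x\in B(y,r/8)\cap D.
\end{align*}
The right-hand side is $\lesssim1$ by the interior estimates above: $h(A_{r/8}(y))\sim1$ and $v\le1$. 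This is the step I expect to need the most care. One must either invoke the boundary Harnack principle as a black box, or derive the needed Carleson-type bound $v(x)\lesssim v(A_{r/8}(y))$ from Hölder decay at the boundary together with a Harnack chain argument.

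Once $v\lesssim h$ holds on all of $\partial\Omega_2$, the maximum principle gives $v\lesssim h$ on $\Omega_2$. Evaluating at $z$ gives the upper bound, and together with the lower bound this proves \eqref{eq:Dahlberg}. All constants depend only on the Lipschitz character $(r_0,M)$.
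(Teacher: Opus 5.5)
\paragraph{Assessment.} The paper gives no proof of this lemma; it is quoted from Dahlberg \cite{Dahlberg1977}. Your argument is therefore a self-contained replacement rather than a reconstruction. Taking the boundary Harnack principle (BHP) as a black box, it is correct.

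Your reading of the statement is the right one. As printed, with $z$ free on the left and $z_0$ fixed on the right, it cannot hold with constants depending only on $D$: $\omega^z(\Delta_r)\to0$ as $z$ tends to a boundary point far from $\xi$, while the right side stays fixed. Every application in the paper uses $z=z_0$, which your version contains, so you can drop the remark about ``Martin-normalised kernels''.

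Your lower bound is the standard one, and the bound $\omega^x(\Delta_r)\gtrsim1$ on $\partial B(A_r,\rho)$ is exactly Lemma \ref{lem:Carleson}. One caveat: $G(x,A_r)\lesssim\rho^{2-d}$ follows from the fundamental solution only for $d\ge3$. In the plane you need $G(x,A_r)\lesssim1$ for $|x-A_r|\sim\delta(A_r)$, which comes from boundary H\"older decay instead.

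\paragraph{Upper bound: comparison with the usual route.} The usual proof (Caffarelli--Fabes--Mortola--Salsa, Jerison--Kenig) never compares $v$ and $h$ near $\partial D$. Take $0\le\varphi\le1$ with $\varphi=1$ on $B(\xi,r)$, $\mathrm{supp}\,\varphi\subset B(\xi,3r/2)$ and $|\Delta\varphi|\lesssim r^{-2}$. Since $z\notin\mathrm{supp}\,\varphi$, Green's representation gives
\[
\omega^z(\Delta_r)\le\int_{\partial D}\varphi\,\mathrm d\omega^z=\int_D G(z,y)\Delta\varphi(y)\,\mathrm dy\lesssim r^{d-2}\sup_{B(\xi,3r/2)\cap D}G(z,\cdot).
\]
The function $G(z,\cdot)$ is harmonic in $B(\xi,2r)\cap D$ and vanishes on $\Delta_{2r}$. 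The Carleson estimate for it, together with Harnack chains, bounds the supremum by $G(z,A_r(\xi))$.

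That route needs only the Carleson estimate, which itself follows from H\"older decay and Harnack chains. BHP is a heavier tool, and in the Jerison--Kenig development it is derived from the very estimate you are proving. Your proof is therefore non-circular only if you cite a proof of BHP that avoids Dahlberg's estimate, for instance the original Lipschitz-domain proofs of Dahlberg, Ancona and Wu, or the De Silva--Savin proof.

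\paragraph{Your proposed fallback does not work.} A Carleson bound $v(x)\lesssim v(A_{r/8}(y))$ only controls $v$ from above, while $h\to0$ at $\partial D$. So it cannot give $v\lesssim h$ on the part of $S$ near $\partial D$. To get by with the Carleson estimate alone, apply it to $G(z,\cdot)$ as above, not to $v$.
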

	The next inequality tells us that harmonic measure of a set is large if its pole and the set are close. It originates in Carleson's seminal work \cite{Carleson62} and has been proven to hold for divergence form operators on a broad class of domains, see \cite{Kenig94}.
	\begin{lemma}\label{lem:Carleson} 
		There exists a constant $c=c(D)$ such that	for any $\xi\in\partial D$
		\begin{align}
			\label{eq:Carleson}
			\omega^x(B(\xi,r))\geq c \quad\forall x\in B\left(A_r(\xi),\frac{r}{2}\right)
		\end{align}
	\end{lemma}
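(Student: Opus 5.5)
The bound \eqref{eq:Carleson} is proved by constructing an explicit subharmonic minorant of $x\mapsto\omega^x(B(\xi,r)\cap\partial D)$ from the local Lipschitz graph description, then transferring the resulting lower bound to the whole ball $B(A_r(\xi),r/2)$ with a Harnack chain. Write $\Delta:=B(\xi,r)\cap\partial D$ and $h(x):=\omega^x(\Delta)$. The function $h$ is harmonic and nonnegative on $D$, bounded by $1$, and tends to $1$ at every point of the relative interior of $\Delta$. The aim is a lower bound $h\geq c$ with $c$ depending only on the character $(r_0,M)$. By \eqref{eq:ArQ} we may assume $r<r_0/C$ for a large $C=C(M)$; larger radii only enlarge $\Delta$ relative to $D$, which has bounded diameter, so they follow from the small-radius case at scale $r_0/C$ by monotonicity of $\omega^x$ in the set.

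\textbf{Step 1 (a point near the boundary where $h$ is large).} Rotate so that $\xi=0$ and $D\cap B(0,r_0)$ is the epigraph of $\Phi$ with Lipschitz constant $M$. Consider the point $x^\ast=(0,\eta r)$ with $\eta=\eta(M)$ small. Compare $h$ with $1-v$, where $v$ is a barrier on the box $Q=\{|x'|<r/(2\sqrt{1+M^2}),\ \Phi(x')<y<\Phi(x')+ r/2\}$. Concretely, let $v$ be the harmonic measure in $Q$ of the part of $\partial Q$ that does not lie on the graph. $Q$ is a Lipschitz (NTA-type) domain of size comparable to $r$ with character depending only on $M$, and it is contained in $D\cap B(0,r)$. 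By the maximum principle $h\geq 1-v$ on $Q$, because $h\geq0$ everywhere and $h=1$ on the graph portion of $\partial Q$. Boundary Hölder continuity of harmonic measure on Lipschitz domains (for example, via the exterior cone condition and an explicit barrier built from a power of the distance to the cone vertex) gives $v(x)\leq C(|x|/r)^\alpha$ for $x\in Q$ near $0$. Choosing $\eta$ small therefore gives $h(x^\ast)\geq 1/2$.

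\textbf{Step 2 (Harnack chain).} By \eqref{eq:ArQ}, both $x^\ast$ and every $x\in B(A_r(\xi),r/2)$ lie in $D$ at distance $\gtrsim_M r$ from $\partial D$. A point with $\delta(x)\geq M^{-1}r$ only needs the ball radius to be $r/(2M)$, so we interpret the ball in \eqref{eq:Carleson} with the constant $M$ enlarged as allowed by the remark after \eqref{eq:ArQ}. Both points also lie within distance $r$ of $\xi$. In a Lipschitz domain they can therefore be joined by a chain of $N(M)$ balls $B_j$ with $\mathrm{diam}\,B_j\leq N\delta(B_j)$; one moves vertically up in the graph coordinates and then across. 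Applying Theorem \ref{thm:Harnack_ineq} with $c_N$ along this chain yields $h(x)\geq c_N^{-N(M)}h(x^\ast)\geq c$.

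\textbf{Main obstacle.} The main obstacle is Step 1, namely the uniform boundary decay of the barrier $v$ with constants depending only on $M$. I would handle it by placing, at each boundary point of the graph portion, the exterior cone $\{y<\Phi(x')-M|x'-x_0'|\}$. On the complement of this cone, the function $|x-x_0|^{\alpha}f(\theta)$ is harmonic and positive for a suitable angular eigenfunction $f$, and this furnishes a supersolution dominating $v$.
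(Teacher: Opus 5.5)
The paper does not prove this lemma. It quotes it from \cite{Carleson62} and \cite{Kenig94}, where it is established for divergence-form operators on broad classes of domains via boundary H\"older continuity plus Harnack chains. Your argument is that standard proof, specialized to the Laplacian on Lipschitz domains. You dominate $1-h$ near $\xi$ by an explicit conic barrier, then propagate $h(x^\ast)\geq 1/2$ along a Harnack chain. This makes the lemma self-contained, with constants visibly depending only on $(r_0,M)$ and $\mathrm{diam}\,D$. The cost is generality: for variable coefficients there are no explicit homogeneous harmonic functions on cone complements, and the cited proofs use the capacity density of $D^c$ with De Giorgi--Nash--Moser estimates instead. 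Since the paper only needs the Laplacian on Lipschitz domains, your route suffices. Three points need tightening.

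(i) You are right that the statement cannot be read literally. For $M>2$ the ball $B(A_r(\xi),r/2)$ may leave $D$. Its points can also approach boundary points outside $\overline{B(\xi,r)}$, where $\omega^x(B(\xi,r))\to0$. For example, near a flat portion of $\partial D$ take $\xi=0$, $M=10$, $A_r(\xi)=(0.9r,0.1r)$; then points $(1.3r,\epsilon)$ lie in the ball. The sensible reading uses radius $\delta(A_r(\xi))/2$, which is how the lemma is used, e.g.\ in Lemma~\ref{lem:tec}. Enlarging $M$ does not produce this; it only weakens \eqref{eq:ArQ}. So state explicitly that you shrink the radius.

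(ii) The barrier built on the exterior cone of slope exactly $M$ does not dominate $v$. The graph can lie on that cone's boundary, e.g.\ $\Phi(x')=-M|x'|$. Then the eigenfunction $f$ vanishes in directions reached by the lateral faces of $Q$, where $v=1$, and no multiple of $|x|^\alpha f(\theta)$ lies above $v$ there. Use the steeper cone $\{y<-2M|x'|\}$ instead. It is still disjoint from $\overline Q\setminus\{0\}$, and the directions of $\overline Q\setminus\{0\}$ form a compact set on which $f\geq c(M)>0$. The maximum principle then gives $v(x)\leq C(M)(|x|/r)^\alpha$. The edges where the graph meets the lateral faces have finite $\mathcal H^{d-2}$ measure, hence are polar, so the boundary discontinuity there is harmless.

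(iii) For $r_0/C\leq r<r_0$, monotonicity in the set is not enough by itself. You also need a Harnack chain, of length depending only on $D$, from $A_{r_0/C}(\xi)$ to the pole $x$. It exists because both points are at distance $\gtrsim_D 1$ from $\partial D$ in a bounded Lipschitz domain.
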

	We turn to a very useful estimate for the Green functions. Grüter and Widman \cite{GrueterWidman} showed that if the boundary of $D$ satisfies an exterior cone condition, for fixed $y\in D$ the Green function $x\mapsto G(x,y)$  decays like $\delta(x)^\gamma$ for $x$ close enough the boundary and $\gamma=\gamma(D)\in(0,1]$. Their result even holds for a large class of divergence form operators. In particular the following lemma is true:
	\begin{lemma}\label{lem:TypeSEst}
		Let $D\subset \mathbb R^d$ be a Lipschitz domain and $x,y\in D$. Then there exists $C=C(D)>0$ and $\gamma=\gamma(D)\in(0,1]$ such that 
		\begin{align*}
			G(x,y)\leq C \delta(x)^\gamma|x-y|^{2-d-\gamma}.
		\end{align*}
	\end{lemma}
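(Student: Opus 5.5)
We derive the estimate from the boundary Hölder decay of nonnegative harmonic functions vanishing on the boundary, combined with the pointwise bound for the Green function near its pole. The statement is symmetric in $x,y$ because $G(x,y)=G(y,x)$. When $\delta(x)\gtrsim|x-y|$, the right-hand side is comparable to $|x-y|^{2-d}$, and the claim reduces to the standard bound $G(x,y)\le c_d|x-y|^{2-d}$. This bound follows from the maximum principle by comparing $G(\cdot,y)$ with the fundamental solution of $\mathbb R^d$. So the only case to treat is $\delta(x)\le \epsilon|x-y|$ for a small fixed $\epsilon$, with $|x-y|\lesssim r_0$. When $|x-y|$ is large compared with $r_0$, boundedness of $D$ lets us absorb the discrepancy into $C$ by running the argument at scale $r_0$.

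Set $r:=|x-y|/4$ and choose $\xi\in\partial D$ with $|x-\xi|=\delta(x)<r$. Then $y\notin B(\xi,2r)$, and the function $v:=G(\cdot,y)$ is nonnegative and harmonic in $D\cap B(\xi,2r)$ and vanishes continuously on $\partial D\cap B(\xi,2r)$. The key ingredient is the boundary Hölder estimate
\begin{align*}
v(z)\le C\left(\frac{|z-\xi|}{r}\right)^{\gamma}\sup_{D\cap B(\xi,r)}v ,\qquad z\in D\cap B(\xi,r/2).
\end{align*}
We prove it by iteration. The complement of a Lipschitz domain satisfies a uniform exterior cone condition, so $\partial D$ is uniformly thick. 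Lemma \ref{lem:Carleson}, or equivalently its complementary form on the harmonic measure of $\partial B(\xi,s)\cap D$ seen from points of $B(\xi,s/2)$, then gives a fixed $\theta<1$ (depending only on the Lipschitz character) with $\sup_{B(\xi,s/2)\cap D}v\le\theta\sup_{B(\xi,s)\cap D}v$ for all $s\le r$. Iterating over dyadic scales $s=2^{-k}r$ until $s\sim|z-\xi|$ produces the factor $(|z-\xi|/r)^\gamma$ with $\gamma=\log_2(1/\theta)\in(0,1]$. If $\gamma$ comes out larger than $1$, we simply decrease it, which is allowed since $|z-\xi|/r\le1$.

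It remains to bound $\sup_{D\cap B(\xi,r)}v$. Every point $z$ of this set satisfies $|z-y|\ge 4r-r=3r$, so the comparison with the fundamental solution gives $v(z)\lesssim r^{2-d}$. Applying the decay estimate at $z=x$, where $|x-\xi|=\delta(x)$, yields
\begin{align*}
G(x,y)\lesssim \delta(x)^\gamma r^{-\gamma} r^{2-d}\sim \delta(x)^\gamma|x-y|^{2-d-\gamma},
\end{align*}
which is the claim.

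The main obstacle is the oscillation-decay step. It requires a capacity or harmonic-measure lower bound for the portion of $D^c$ inside each ball $B(\xi,s)$ that holds uniformly in the scale. For Lipschitz domains this follows from the exterior cone of fixed aperture: the cone occupies a fixed proportion of every ball centred at a boundary point. I would first try to prove it by comparing $v$ with the harmonic measure in $B(\xi,s)\setminus(\text{cone})$ of the spherical part of that boundary, using the fact that this harmonic measure is at most $\theta$ on $B(\xi,s/2)$ by scaling. Alternatively, one can simply cite the Grüter–Widman estimate, as the text preceding the lemma indicates.
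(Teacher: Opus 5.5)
The paper gives no argument of its own here. The lemma is quoted from Gr\"uter--Widman, whose estimate holds for divergence-form operators with bounded measurable coefficients under an exterior cone condition (and, via Kenig, on domains of type $S$). Your proof is correct. It is a self-contained derivation of the special case the paper uses, and it follows the mechanism behind the cited result: boundary H\"older decay of $G(\cdot,y)$ at scale $|x-y|$, combined with the interior bound $G(x,y)\le c_d|x-y|^{2-d}$. For the Laplacian both ingredients are elementary: the maximum principle, comparison with the fundamental solution, and scaling of a cone complement. For general operators the interior bound is itself one of the main results of Gr\"uter--Widman, and the boundary decay needs De Giorgi--Nash--Moser type estimates. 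So the citation buys generality, while your route makes the dependence of $\gamma$ on the Lipschitz constant explicit.

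Some points to tighten:

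\begin{itemize}
\item Base the decay step on the exterior-cone comparison from your last paragraph, not on Lemma \ref{lem:Carleson}. As stated, that lemma bounds $\omega_D^x(\Delta_r(\xi))$ from below only for $x$ near $A_r(\xi)$. You need $\omega^z_{D\cap B(\xi,s)}(\partial B(\xi,s)\cap D)\le\theta$ for every $z\in B(\xi,s/2)\cap D$, including points close to $\partial D$. The two are therefore not equivalent.

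\item In the cone comparison, scaling only shows that $\theta$ is independent of $s$; it does not show $\theta<1$. For that, observe that the harmonic measure of the spherical part of the boundary of $B(\xi,1)\setminus\overline\Gamma$ is $<1$ in the interior by the strong maximum principle. It extends continuously by $0$ to $\partial\Gamma\cap\overline{B(\xi,1/2)}$, since all these points, including the vertex, are regular. Hence its supremum over the compact set $\overline{B(\xi,1/2)}\setminus\Gamma$ is $<1$. Uniformity in $\xi$ comes from the fixed aperture of the cone.

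\item The correct estimate is $|z-y|\ge 4r-\delta(x)-r>2r$, not $3r$; this is harmless.

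\item The bound $G\le c_d|x-y|^{2-d}$ requires $d\ge3$. This is the implicit setting of the paper and of Gr\"uter--Widman, but in $d=2$ your argument would lose a logarithmic factor.
\end{itemize}
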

	In fact, Lemma \ref{lem:TypeSEst} is valid for a much broader class of domains, so-called \textit{domains of type $S$}, which even includes NTA domains \cite{Kenig94}. \par
	We are also very interested in useful lower bounds on the Green function. In \cite{Bogdan2000} Bogdan consolidated and summarized existing estimates for the Green function on Lipschitz domains into a concise form and notably also provided matching lower bounds, see Theorem \ref{thm:Bogdan} in Section \ref{sec:GreenEstimates}. By the identity
	\begin{align}\label{eq:MartinKernelIdentity}
		k(x,\xi)= \lim_{r\rightarrow 0}\frac{G(A_r(\xi),x)}{G(A_r(\xi),z_0)},
	\end{align}
	sharp estimates of the Green function produce sharp estimates of the Martin kernel.	
	\begin{theorem}[\cite{Bogdan2000}]\label{thm:BogdanMartin}
		Let $D$ be bounded Lipschitz domain with character $(r_0,M)$ and let $k$ be the Martin kernel with pole at $z_0\in D$. Then
		\begin{align*}
			k(x,\xi)\sim \frac{G(x,z_0)}{G^2(A(x,\xi),z_0)}|x-\xi|^{2-d}
		\end{align*}
		where $A(x,\xi)=A_{|x-\xi|}(\xi)$ if $|x-\xi|\leq r_0$ and $A(x,\xi)=z_1$ for some $z_1\in D$ with $|z_0-z_1|=\delta(z_0)/2$
	\end{theorem}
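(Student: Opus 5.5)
The estimate follows by passing to the limit in the identity \eqref{eq:MartinKernelIdentity}. This requires a two-sided Green function estimate for $G(A_r(\xi),x)$ and for $G(A_r(\xi),z_0)$ as $r\to0$, which is exactly what Bogdan's sharp estimate (Theorem \ref{thm:Bogdan}) provides. In the form we will use, it reads
\begin{align*}
G(x,y)\sim \frac{G(x,z_1)\,G(y,z_1)}{G^2(A(x,y),z_1)}\,|x-y|^{2-d},
\end{align*}
where $A(x,y)$ is a point at scale $\sim|x-y|$ lying above both $x$ and $y$ (a point $A_r$ for both, with $r\approx\max(|x-y|,\delta(x),\delta(y))$). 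Here $z_1$ is a fixed interior reference point with $|z_0-z_1|=\delta(z_0)/2$; by the Harnack inequality (Theorem \ref{thm:Harnack_ineq}) it may be exchanged for $z_0$ wherever the arguments stay away from $z_0$.

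\textbf{Step 1 (the numerator).} Fix $x\in D$ and $\xi\in\partial D$, and let $r\to0$, so that $y=A_r(\xi)$ approaches $\xi$. Assume first that $|x-\xi|\le r_0$. Once $r\ll|x-\xi|$ we have $|x-y|\sim|x-\xi|$. The connecting point $A(x,y)$ is then at scale $|x-\xi|$ above $\xi$, and by Harnack chains of bounded length it is comparable to $A_{|x-\xi|}(\xi)=A(x,\xi)$. Bogdan's estimate therefore gives
\begin{align*}
G(A_r(\xi),x)\sim \frac{G(x,z_0)\,G(A_r(\xi),z_0)}{G^2(A(x,\xi),z_0)}\,|x-\xi|^{2-d}.
\end{align*}

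\textbf{Step 2 (the limit).} Divide by $G(A_r(\xi),z_0)$. The factor $G(A_r(\xi),z_0)$ cancels exactly, leaving a ratio comparable to $\frac{G(x,z_0)}{G^2(A(x,\xi),z_0)}|x-\xi|^{2-d}$ with constants independent of $r$. Passing to the limit in \eqref{eq:MartinKernelIdentity} yields the claim.

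\textbf{Step 3 (the far case).} If $|x-\xi|>r_0$, then $|x-\xi|\sim1$ since $D$ is bounded, and $A(x,y)$ may be taken comparable to the interior point $z_1$ through a Harnack chain depending only on $D$. In that case $G(A(x,\xi),z_0)\sim1$, and the same computation gives $k(x,\xi)\sim G(x,z_0)$, consistent with the stated formula.

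\textbf{Main obstacle.} The delicate point is the uniformity of the comparison $A(x,A_r(\xi))\approx A(x,\xi)$. The relevant points must be joined by Harnack chains whose length is bounded only in terms of $M$. A second concern is the pole: when $x$ or $A(x,\xi)$ is near $z_0$, the Green function is singular there. This is handled by using $z_1$ as the reference pole in Bogdan's formula and then applying Harnack's inequality to switch from $z_1$ to $z_0$ away from a fixed neighbourhood of $z_0$; near $z_0$ both sides are comparable to $G(x,z_0)$ directly.
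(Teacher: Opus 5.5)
Your route is the paper's own. The paper gives no argument beyond noting that Bogdan's sharp Green function estimate (Theorem \ref{thm:Bogdan}), inserted into the limit identity \eqref{eq:MartinKernelIdentity}, yields the Martin kernel estimate. Your Steps 1--3 carry this out correctly for $x$ outside a fixed neighbourhood of $z_0$: as $r\to0$ one has $\rho\to|x-\xi|$, the point of $\mathscr B(x,A_r(\xi))$ is Harnack-equivalent to $A_{|x-\xi|}(\xi)$, and $G(A_r(\xi),z_0)$ cancels with constants independent of $r$.

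What does not work is the last sentence of your final paragraph: near $z_0$ the two sides are \emph{not} comparable. The function $k(\cdot,\xi)$ is positive and harmonic on all of $D$, including at $z_0$, and \eqref{eq:MartinKernelIdentity} gives $k(z_0,\xi)=1$. By Harnack, $k(x,\xi)\sim1$ on $B(z_0,\delta(z_0)/2)$. The right-hand side, however, contains $G(x,z_0)\sim|x-z_0|^{2-d}$ (logarithmic for $d=2$), which blows up as $x\to z_0$. Meanwhile the remaining factors are of order one there, for instance when $|x-\xi|>r_0$, where $A(x,\xi)=z_1$. So the bound genuinely fails at the pole. Switching the reference pole to $z_1$ cannot repair this, because the offending factor $G(x,z_0)$ sits in the statement itself. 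You should instead carry over the hypothesis of Theorem \ref{thm:Bogdan}, $x\notin B(z_0,\rho_0/3)$, which is how the estimate is meant. Alternatively, to cover all $x\in D$, replace $G(x,z_0)$ by $G(x,z_0)\wedge1$.

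A smaller point of the same kind concerns the range $\rho_0/32<|x-\xi|\le r_0$. There Bogdan's theorem returns $A=z_1$, not a point above $\xi$. Replacing $z_1$ by $A_{|x-\xi|}(\xi)$ via Harnack is legitimate only if $A_{|x-\xi|}(\xi)$ stays away from $z_0$, e.g.\ when $r_0<\delta(z_0)/2$. You should make that explicit.
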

	The main ingredients of its proof are boundary Harnack principle and the existence of so called \textit{inverted cones} that allow to cut out singularities of the Green function close to the boundary. They can easily be constructed on Lipschitz domains after passing to local coordinates. Boundary Harnack principle is also available for divergence form operators on NTA domains \cite{JerisonKenig82}, we note that it is possible to prove Bogdan's sharp inequalities in this setting. NTA domains were introduced by Jerison and Kenig \cite{JerisonKenig82} who showed that under, in some sense minimal assumptions on the domain, many results in harmonic analysis/potential theory continue to be valid. The boundary of NTA domains is no longer locally the graph of a function, so the main difficulty presents itself in replacing the notion of \textit{inverted cones} on Lipschitz domains. This is done via Jones' \textit{localization lemma}\cite{LocJones}.\par 
	An estimate that singles out the decay of the Martin kernel was given by Hunt and Wheeden \cite{HuntWheeden68} and later generalized by Jerison and Kenig \cite{JerisonKenig82}.
	\begin{theorem}\label{lem:DecayMartinKernel} Fix $x\in\partial D$, $r<r_0$ and set $\Delta_j:=\Delta_{2^jr}(x)$. Then
		\begin{align}\label{eq:DecayMartinKernel}
			\sup_{\xi\in\Delta_{j+1}\setminus\Delta_j}k(A_r(x),\xi)\leq C\frac{2^{-\alpha j}}{\omega^{z_0}(\Delta_j)}
		\end{align}
	\end{theorem}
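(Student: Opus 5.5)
Since the pole $z_0$ is fixed and $\xi\mapsto k(A_r(x),\xi)$ is a Radon–Nikodym derivative, I would reduce the statement to comparing $k(A_r(x),\xi)$ with a ratio of Green functions. Theorem \ref{thm:BogdanMartin} is the natural starting point, and I take $\Delta_j$ to be the surface ball $B(x,2^jr)\cap\partial D$. Fix $\xi\in\Delta_{j+1}\setminus\Delta_j$ and set $y=A_r(x)$. Then $|y-\xi|\sim 2^jr$: indeed $|y-x|\le r$ and $|\xi-x|\ge 2^jr$, so for $j\ge1$ we get $|y-\xi|\ge 2^{j-1}r$. The case $j=0$ follows from Harnack together with Carleson's estimate, exactly as in the case $j\geq1$ but without decay. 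Theorem \ref{thm:BogdanMartin} then gives
\begin{align*}
k(y,\xi)\sim \frac{G(y,z_0)}{G^2(A(y,\xi),z_0)}(2^jr)^{2-d}.
\end{align*}
The point $A(y,\xi)=A_{|y-\xi|}(\xi)$ sits at scale $\sim 2^jr$ near $\xi$, and $\xi$ is within $2^{j+1}r$ of $x$. A Harnack chain of bounded length (Theorem \ref{thm:Harnack_ineq}) therefore lets me replace it by $A_{2^jr}(x)$. When $2^jr\gtrsim r_0$, both points are replaced by $z_1$ and all quantities are comparable to constants.

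Next I would use Dahlberg's Lemma \ref{lem:Dahlberg} with pole $z_0$, which gives $\omega^{z_0}(\Delta_j)\sim (2^jr)^{d-2}G(A_{2^jr}(x),z_0)$. Substituting this, the bound becomes
\begin{align*}
k(y,\xi)\lesssim \frac{1}{\omega^{z_0}(\Delta_j)}\cdot\frac{G(A_r(x),z_0)}{G(A_{2^jr}(x),z_0)}.
\end{align*}
It therefore remains to show the decay
\begin{align*}
\frac{G(A_r(x),z_0)}{G(A_{2^jr}(x),z_0)}\lesssim 2^{-\alpha j}.
\end{align*}

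This decay estimate is the main obstacle. My plan is to iterate a one-step estimate of the form
\begin{align*}
G(A_{s}(x),z_0)\le \theta\, G(A_{2s}(x),z_0),\qquad \theta<1,
\end{align*}
valid for $s<r_0/2$. The function $v=G(\cdot,z_0)$ is positive and harmonic in $D\cap B(x,4s)$ (since $z_0$ is far away) and vanishes on $\partial D$. I would apply the boundary Hölder decay that underlies Lemma \ref{lem:TypeSEst}, i.e. the Grüter–Widman/exterior-cone estimate $\sup_{B(x,s)\cap D}v\le c\,(1/2)^{\gamma}\sup_{B(x,2s)\cap D}v$ obtained by iterating the interior Carleson/exterior-cone oscillation bound. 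This is combined with the Carleson estimate $\sup_{B(x,2s)\cap D}v\lesssim v(A_{2s}(x))$, a consequence of Lemma \ref{lem:Carleson} and Harnack. Chaining the two gives $v(A_s)\lesssim 2^{-\gamma k}v(A_{2^ks})$ for all $k$, so one can take $\alpha=\gamma$ (or run the Hölder estimate over several doublings to absorb the constants). Iterating $j$ times yields the factor $2^{-\alpha j}$, and taking the supremum over $\xi$ completes the proof.
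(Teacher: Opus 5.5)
Your argument is correct. Its skeleton is the one the paper indicates (Theorem \ref{thm:BogdanMartin} plus boundary decay of the Green function), but you obtain the decay by a different, local mechanism.

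The paper cites the global Gr\"uter--Widman bound, Lemma \ref{lem:TypeSEst}. With the pole at $z_0$, that lemma only gives $G(A_r(x),z_0)\lesssim r^\gamma$. This does not control the quotient $G(A_r(x),z_0)/G(A_{2^jr}(x),z_0)$, because the denominator may decay faster than $(2^jr)^\gamma$, as at the narrow corners discussed in the introduction. So on the paper's route one must apply Lemma \ref{lem:TypeSEst} with a pole $w$ at scale $2^jr$ near $x$, and then transfer back to $z_0$ through Bogdan's two-sided Green estimate (i.e.\ boundary Harnack).

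You instead use Lemma \ref{lem:Dahlberg} to rewrite $\omega^{z_0}(\Delta_j)$ as $(2^jr)^{d-2}G(A_{2^jr}(x),z_0)$ --- a step the paper leaves implicit but which is needed to produce the denominator. You then prove the quotient bound directly, from the scale-invariant boundary H\"older decay plus the Carleson estimate applied to $G(\cdot,z_0)$ itself. This avoids pole-shifting and boundary Harnack, and makes $\alpha=\gamma$ explicit. The cost is that you rely on the Carleson estimate, which is not among the paper's stated preliminaries.

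A few statements need tightening.
\begin{enumerate}
\item The Carleson estimate $\sup_{B(x,2s)\cap D}v\lesssim v(A_{2s}(x))$ is not a consequence of Lemma \ref{lem:Carleson} and Harnack; that lemma is a lower bound on harmonic measure. The estimate follows from the boundary H\"older decay together with Harnack chains (Caffarelli--Fabes--Mortola--Salsa, Jerison--Kenig). So your ingredients suffice, but the attribution is wrong.
\item A point-value inequality $G(A_s(x),z_0)\le\theta\, G(A_{2s}(x),z_0)$ with $\theta<1$ does not come out of a single doubling, since the Carleson constant may exceed $\theta^{-1}$. The correct formulation is the one in your last sentence: apply the H\"older decay once, from scale $2^jr$ down to $r$, then the Carleson estimate once at scale $2^jr$. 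This gives $G(A_r(x),z_0)\lesssim 2^{-\gamma j}G(A_{2^jr}(x),z_0)$ with a single constant.
\item For the finitely many $j$ with $2^jr\gtrsim r_0$, it is not true that all quantities are constants. Indeed $\omega^{z_0}(\Delta_j)\sim 1$, but you still need $G(A_r(x),z_0)\lesssim r^\gamma\sim 2^{-\gamma j}$. Your decay estimate run up to scale $\sim r_0$, or Lemma \ref{lem:TypeSEst} directly, supplies this.
\end{enumerate}
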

	It can be shown, that estimate \eqref{eq:DecayMartinKernel} is obtained when combining Theorem \ref{thm:BogdanMartin} with Lemma \ref{lem:TypeSEst}.
	\\\\
	From now on, we assume that $D$ is a star-shaped Lipschitz domain with the origin as center. Set $z_0=0$. Scaling the domain, we can further assume that $D\subseteq B(0,1)$. Through a covering argument that will be expanded on later, we can lift the special case of star-shaped domains to general Lipschitz domains.\par
	By star-shapedness, there exists a Lipschitz function $\Phi:\mathbb{S}^{d-1}\rightarrow\mathbb{R}^d$ such that $\Phi(\mathbb{S}^{d-1})=\partial D$ and $c_\Phi:=d(\partial D,0)>0$, i.e.: $|\Phi|\geq c_\Phi$.
	\begin{figure}[h!]\label{fig:StarShapeDomain}
		\centering
		\begin{tikzpicture}[scale=1.8,
			every node/.style={font=\small}]
			
			\draw[black, thick]
			(0.2,0.1)
			.. controls (0.1,0.9) and (0.8,1.4) ..
			(1.2,2.0)
			-- (1.35,3.0)
			-- (1.8,4.3)
			.. controls (3.6,4.2) and (5.7,4.0) ..
			(7.2,3.3)
			.. controls (6.8,3.0) and (6.5,2.6) ..
			(6.45,2.4)
			.. controls (6.8,2.0) and (7.1,1.3) ..
			(7.5,1.0)
			-- (6.7,0.9)
			-- (5.8,-0.3)
			-- (3.8,0.2)
			.. controls (2.6,0.3) and (1.5,0.2) ..
			(0.2,0.1);
			
			\coordinate (A) at (0.2,0.1);
			\coordinate (B) at (3.26,2.76);
			\coordinate (C) at (6.45,2.4);
			
			\coordinate (D) at (2.89,3.15);
			\coordinate (E) at (3.6,2.4);
			
			\draw[black, thick]
			(A)--(D)--(E)--cycle;
			\fill[gray!30] (A) -- (D) -- (E) -- cycle;
			
			\draw[black] (A)--($(A)!0.17!(B)$);
			\draw[black] (B)--($(B)!0.78!(A)$);

			\draw[black, thick] (B)--(C);

			\fill[black] (A) circle (1.5pt);
			\fill[black] (B) circle (2pt);
			\fill[black] (C) circle (1.5pt);
			\fill[black] ($(B)!0.7!(C)$) circle (1.5pt);
			
			\node[black] at (0.1,-0.22) {$\xi$};
			
			\node[black] at (2.3,1) {$\Gamma_\beta(\xi)$};
			
			\node[black] at (5.7,2)
			{$x_r=(1-r)x$};
			
			\node[black] at (6.8,2.4)
			{$x$};
			
			\node[black] at (5,0.5) {$D$};
			
			\node[black] at (3.7,2.9) {Origin};
			
			\node[black] at (0.8,0.6) {$\beta$};
		\end{tikzpicture}
		\caption{A star-shaped domain $D$ and the cone $\Gamma_\beta(\xi)$}
	\end{figure}
	For $z\in \overline{D}$ and $y\in(0,1)$, let $z_y:=(1-y)z$, see Figure \ref{fig:StarShapeDomain}. Analogously, for $W\subset\mathbb{R}^d$ and $y\in(0,1)$, we define $W_y:=(1-y)W$. Let $F\subseteq\mathbb{R}^d$ and $y\in(0,1)$ such that $W_y\subseteq F$. If $\varphi$ is defined on $F$, we write $\varphi_y$ to denote the function $\varphi_y(x):=\varphi((1-y)x)$, $x\in W$.\par
	
	Fix $\xi\in\partial D$ and write $\Gamma_\beta(\xi)$ to denote the \textit{radial} cone with apex at $\xi$ and height $|\xi|$ that opens toward the origin at an angle $\beta$ around the axis determined by $\xi$ (Figure \ref{fig:StarShapeDomain}). We remark that there exists an angle $\beta_0=\beta(\partial D)$ such that the open cone $\Gamma_{\beta_0}(\xi)$ is contained in $D$ for all $\xi\in\partial D¸$. Thus in particular $d(\xi_y,S)\sim_{\beta_0} y$. Moreover, in the notation introduced previously,
	\begin{align*}
		x_{r}=A_{r}(x)\in D
	\end{align*}
	for $x\in\partial D$ and $r\in(0,1)$. We will make use of the following notational convention unless otherwise stated: If $f$ is a function on $D$ and $y\in(0,1)$, then $f_y:\partial D\rightarrow\mathbb R$ denotes the function defined by $\xi\mapsto f(\xi_y)$. If $p,q:(\partial D)^2\rightarrow\mathbb R$ then $p\circ q:(\partial D)^2\rightarrow[-\infty,\infty]$ formally denotes the function
	\begin{align*}
		(p\circ q)(x,\zeta):=\int_{\partial D}p(x,\xi)q(\xi,\zeta)\mathrm d\omega(\xi).
	\end{align*}  
	We call $p$ as above a \textit{kernel} function. Define the integral operator $P$ corresponding to the kernel $p$ that formally sends a function $f:\partial D\rightarrow\mathbb R$ to the function $(Pf)(x):=(p\circ f)(x)$, $x\in\partial D$. We introduce some algebra on star shaped domains:
	\begin{definition}
		On the set $[0,1]$ we define the operation $\star:[0,1]^2\rightarrow[0,1];\quad a\star b:=1-(1-a)(1-b)=a+b-ab$. $([0,1],\star)$ is a semigroup. 
	\end{definition}
	Notice that for any function $f$ on $D$ and $t,\tau\in(0,1)$, we have $f_t(x_\tau)=f_{t\star\tau}(x)$ for all $x\in \overline D$. By uniqueness of the solution to the Dirichlet problem, the following statements can easily be verified.
	\begin{lemma}\label{lem:MartinKernel}
		For $y\in(0,1)$, set $k_y(x,\xi):=k(x_y,\xi)$, $x,\xi\in\partial D$. The integral operator that corresponds to the kernel $k_y$ is denoted by $K_y$. The following holds true:
		\begin{enumerate}
			\item Let $v$ be a harmonic function on $D$, then for all $x\in \partial D$ and $y_1,y_2\in(0,1)$ we have:
			\begin{align*}
				(K_{y_2}v_{y_1})(x) = v_{y_1\star y_2}(x), \quad \forall x\in\partial D
			\end{align*}
			\item (Semi-group property) For $y_1,y_2\in(0,1)$ we have 
			\begin{align*}
				k_{y_1}\circ k_{y_2} = k_{y_1\star y_2}.	
			\end{align*}
			\item $K_y1=1$.
		\end{enumerate}
	\end{lemma}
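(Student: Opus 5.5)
The plan is to derive all three statements from the single identity $u(x)=\int_{\partial D}k(x,\xi)\,\mathrm d\omega^{x'}(\xi)\cdot(\ldots)$. Concretely, I would use the Martin kernel as the Radon–Nikodym derivative $k(x,\xi)=\mathrm d\omega^x(\xi)/\mathrm d\omega(\xi)$, with $\omega=\omega^{0}$ and pole $z_0=0$, together with the fact that dilates of $D$ stay inside $D$ by star-shapedness.

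For item 1, fix $y_1\in(0,1)$ and consider $w(z):=v((1-y_1)z)$. It is harmonic on the dilated domain $(1-y_1)^{-1}D\supset\overline D$, because $D$ is star-shaped with respect to the origin, so $(1-y_1)\overline D\subset D$. Hence $w$ is continuous on $\overline D$ and solves the Dirichlet problem on $D$ with boundary data $w|_{\partial D}=v_{y_1}$. By uniqueness of the Dirichlet problem, for every $z\in D$,
\begin{align*}
w(z)=\int_{\partial D}v_{y_1}(\xi)\,\mathrm d\omega^z(\xi)=\int_{\partial D}k(z,\xi)v_{y_1}(\xi)\,\mathrm d\omega(\xi).
\end{align*}
Now take $z=x_{y_2}$ with $x\in\partial D$. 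The left side is $v((1-y_1)(1-y_2)x)=v_{y_1\star y_2}(x)$, and the right side is $(K_{y_2}v_{y_1})(x)$.

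For item 3, apply the same representation to the constant function $1$, which is harmonic and continuous up to the boundary. This gives $\int k(x_y,\xi)\,\mathrm d\omega(\xi)=\omega^{x_y}(\partial D)=1$. Equivalently, it is item 1 with $v\equiv1$.

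For item 2, fix $\zeta\in\partial D$ and apply item 1 to the harmonic function $v:=k(\cdot,\zeta)$. The function $k(\cdot,\zeta)$ is positive and harmonic on $D$; I would take it only for $\omega$-a.e. $\zeta$ at first, or use the limit representation \eqref{eq:MartinKernelIdentity} to get a genuine harmonic version. Then $v_{y_1}(\xi)=k(\xi_{y_1},\zeta)=k_{y_1}(\xi,\zeta)$, and item 1 gives
\begin{align*}
\int_{\partial D}k_{y_2}(x,\xi)k_{y_1}(\xi,\zeta)\,\mathrm d\omega(\xi)=k_{y_1\star y_2}(x,\zeta),
\end{align*}
that is, $k_{y_2}\circ k_{y_1}=k_{y_1\star y_2}$. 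Since $\star$ is commutative, this is the same as $k_{y_1}\circ k_{y_2}=k_{y_1\star y_2}$.

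The only delicate point is regularity. $v$ need not be bounded on $D$, but only its restriction to the compact set $(1-y_1)\overline D\subset D$ enters, so continuity on $\overline D$ is automatic. For item 2, I need $k(\cdot,\zeta)$ to be a genuine harmonic function for every $\zeta$, and not just a density defined $\omega$-a.e. Lipschitz domains are NTA, so the Martin kernel exists as a continuous limit via \eqref{eq:MartinKernelIdentity} and is harmonic in $x$. Its values then agree with the Radon–Nikodym derivative for every $x$ and every $\zeta$, so the identity holds pointwise.
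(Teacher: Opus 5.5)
Your proposal is correct and is the argument the paper has in mind when it says the lemma follows from uniqueness of the Dirichlet problem. Star-shapedness gives $(1-y_1)\overline D\subset D$, so $z\mapsto v((1-y_1)z)$ is the solution with boundary data $v_{y_1}$; evaluating its Martin-kernel representation at $x_{y_2}$ gives item 1, and items 3 and 2 are the special cases $v\equiv 1$ and $v=k(\cdot,\zeta)$ (the latter combined with commutativity of $\star$). Your care about taking a genuinely harmonic version of $k(\cdot,\zeta)$ for every $\zeta$ fills in a point the paper only asserts; the garbled formula in your opening sentence should be cleaned up.
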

	For each $s\in\mathbb N$ we want to find the number $y(s)\in(0,1)$ that satisfies the relation
	\begin{align*}
		y(s)\star2^{-s}=2^{-s+1}
	\end{align*} 
	An easy calculation shows that we have to choose $y(s):=1-\frac{1-2^{-s+1}}{1-2^{-s}}=\frac{2^{-s+1}-2^{-s}}{1-2^{-s}}$. Define the integral kernel
	\begin{align*}
		\tilde{k}_s:=k_{y(s)}=k_{1-\frac{1-2^{-s+1}}{1-2^{-s}}}, \quad s\in\mathbb N
	\end{align*} 
	By the defining property of $y(s)$
	\begin{align*}
		\tilde{k}_{s+1}\circ k_{2^{-s-1}}=k_{2^{-s}}=k_{2^{-s-1}}\circ \tilde k_{s+1}
	\end{align*}
	holds, and of course $\tilde{k}_s\sim k_{2^{-s}}$, by Harnack. 
	\section{The Main Lemma}\label{sec:Mainlemma}
	In this section we present the main tool formulated in Lemma \ref{lem:main_lemma}. The arguments below are inspired by those developed by Havin and Mozolyako for $C^2$ domains \cite{HavinMozol2016}. We base our approach on the essential discrete backbone of their argument. Thus the differential equation, crucial to Havin Mozolyako, disappears in our proof, and resurfaces as a simple discrete telescoping argument below. Moreover, many intermediate limiting processes in the original argument become obsolete.
	\begin{lemma}[Main Lemma]\label{lem:main_lemma}
		Suppose that $D\subset\mathbb R^d$ is a Lipschitz domain and star-shaped with respect to the origin. Let $(c_s)_s$ be a sequence of kernel functions on $(\partial D)^2$ satisfying the following:
		\begin{enumerate}
			\item $|c_s|\leq C2^sk_{2^{-s}}$
			\item $\int_{\partial D}c_s(x,\xi)\mathrm d\omega(\xi)=0$ for all $x\in\partial D$
			\item There exists $\theta\in(0,1]$ such that for each $j>k$ and any nonnegative harmonic function $v$ on $D$ we have 
			\begin{align}\label{eq:c_main_prop}
				\left|\int_{\partial D}c_j(x,\xi)v_{2^{-k}}(\xi)\mathrm d\omega(\xi)\right| \leq C2^{\theta k+(1-\theta)j}v_{2^{-k}}(x)
			\end{align}
		\end{enumerate}
		where $C$ is a constant depending only on $D$.
		Define the kernel $b_s:= k_{2^{-s}}\circ c_s$, $s\in\mathbb N$.
		Then for any nonzero finite measure $\kappa$ supported on $\partial D$ and any $\varepsilon\in(0,\varepsilon(\partial D))$, there exists a finite measure $\nu=\nu_{\varepsilon,\kappa}\neq0$ such that the following holds:\\
		If  $(g_s)_s\subset L^1(\mathrm d\omega)$ is a nonnegative sequence satisfying 
		\begin{align}\label{cond:gs}
			\forall s\in\mathbb{N}:\quad \tilde K_{s+1}g_{s+1}=g_s\quad\text{ and } \quad C_sg_s\geq0,
		\end{align}
		then
		\begin{align*}
			\int_{\partial D} \sum_{s=1}^\infty 2^{-s} B_sg_s\mathrm{d}\nu\lesssim\frac{1}{\varepsilon}\int_{\partial D} K_{1_2}g_1\mathrm d \kappa.
		\end{align*}
		\end{lemma}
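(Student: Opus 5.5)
The plan is to construct $\nu$ as the limit of measures $\nu_N$ obtained from $\kappa$ by a Riesz-product-type perturbation, and then to estimate $\sum_s 2^{-s}\int B_sg_s\,\mathrm d\nu$ by telescoping. I have not checked the constants in the step that controls the product.

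First, a positivity observation. Since $b_s=k_{2^{-s}}\circ c_s$ and $k_{2^{-s}}\geq0$, the hypothesis $C_sg_s\geq0$ gives $B_sg_s=K_{2^{-s}}(C_sg_s)\geq0$. Consequently every term on the left-hand side is nonnegative, and it suffices to bound the partial sums uniformly in $N$ and then pass to the limit.

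Next, the telescoping quantity. Set $Q_s:=\int_{\partial D}K_{2^{-s}}g_s\,\mathrm d\nu_s$, where $\nu_s$ is built inductively by
\begin{align*}
\mathrm d\nu_s=\bigl(1-\varepsilon 2^{-s}h_s\bigr)\,\mathrm d\nu_{s-1},\qquad \nu_0=\kappa,
\end{align*}
with $h_s$ chosen so that the $\nu_{s-1}$-adjoint of multiplication by $h_s$ reproduces $B_s$. The construction is to be arranged so that Lemma \ref{lem:MartinKernel} applies. The compatibility $g_s=\tilde K_{s+1}g_{s+1}$ together with the identity $\tilde k_{s+1}\circ k_{2^{-s-1}}=k_{2^{-s}}$ gives $K_{2^{-s}}g_s=K_{2^{-s-1}}g_{s+1}$. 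This is the discrete substitute for the Havin--Mozolyako differential equation. It yields
\begin{align*}
Q_s-Q_{s+1}=\varepsilon 2^{-s-1}\int B_{s+1}g_{s+1}\,\mathrm d\nu_{s}+\text{(error)}.
\end{align*}
Summing over $s$ and using $Q_N\geq0$ (which requires $\nu_N\geq0$) gives
\begin{align*}
\varepsilon\sum_{s\le N}2^{-s}\int B_sg_s\,\mathrm d\nu_{s-1}\lesssim Q_1\lesssim\int K_{1/2}g_1\,\mathrm d\kappa.
\end{align*}
The last inequality uses Harnack to compare $\tilde k_s$ with $k_{2^{-s}}$.

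The main obstacle is controlling the product. I must show three things.
\begin{enumerate}
\item Positivity: $\nu_N\geq0$.
\item Uniform comparability: $\nu_{s-1}\gtrsim\nu_N$ on the relevant quantities, so that the integrals against the intermediate $\nu_{s-1}$ may be replaced by integrals against a single limit $\nu$.
\item Non-degeneracy: $\nu_N$ converges weakly to some $\nu\neq0$.
\end{enumerate}
Positivity of each factor follows from hypothesis 1: $2^{-s}|c_s|\leq Ck_{2^{-s}}$, and $K_y1=1$, so for $\varepsilon<\varepsilon(\partial D)$ the factors $1-\varepsilon 2^{-s}h_s$ stay in $[1/2,3/2]$. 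Non-degeneracy should follow from hypothesis 2, the mean-zero property, which gives $\nu_N(\partial D)=\nu_{N-1}(\partial D)=\dots=\kappa(\partial D)>0$, so the total mass never collapses.

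The delicate step is item 2. Expanding the product, the cross terms pair $c_j$ with smoothed functions $v_{2^{-k}}$ for $j>k$, where $v$ is a nonnegative harmonic function produced by the earlier factors. Hypothesis 3 bounds each such pairing by $C2^{\theta k+(1-\theta)j}v_{2^{-k}}$. After the weight $2^{-j}$ this becomes $C2^{-\theta(j-k)}v_{2^{-k}}$, which is geometrically summable in $j-k$. So the accumulated error is at most $C\varepsilon\theta^{-1}$ times the main term. For $\varepsilon$ small this error is absorbed into the left-hand side, giving the claimed bound with the constant $1/\varepsilon$.

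If this direct absorption fails, the fallback is to redefine $h_s$ using harmonic extensions. That makes the intermediate densities themselves nonnegative harmonic functions $v$, to which hypothesis 3 applies verbatim at each stage.
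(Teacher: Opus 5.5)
There is a genuine gap at the telescoping step, and everything else rests on it. The identity $K_{2^{-s}}g_s=K_{2^{-s-1}}g_{s+1}$ is false. The compatibility condition gives $K_{2^{-s}}g_s=K_{2^{-s}}\tilde K_{s+1}g_{s+1}=K_{2^{-s}\star y(s+1)}g_{s+1}$, and $2^{-s}\star y(s+1)>2^{-s}>2^{-s-1}$. What the relation $\tilde k_{s+1}\circ k_{2^{-s-1}}=k_{2^{-s}}$ actually yields, by commutativity of $\star$, is $K_{2^{-s-1}}g_s=K_{2^{-s}}g_{s+1}$, with the indices the other way round. For $g_s=u_{2^{-s}}$, your two sides are the restrictions of $u$ to the different level sets $(1-2^{-s})^2\partial D$ and $(1-2^{-s-1})^2\partial D$. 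Hence $Q_s$ does not telescope. Your ``(error)'' is neither small nor of one sign: pointwise it is only controlled by a multiple of $K_{2^{-s}}g_s$ itself, while the main term carries the factor $\varepsilon$, so it cannot be absorbed.

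This failure is structural. Multiplying $\nu_{s-1}$ by a density $1-\varepsilon2^{-s}h_s$ can never produce the transport by $\tilde K_s$ that the condition $\tilde K_{s+1}g_{s+1}=g_s$ is supposed to cancel against. Moreover $h_s$ is not well defined, since no multiplication operator reproduces the integral operator $B_s$. If you mean $h_s\nu_{s-1}=B_s^*\nu_{s-1}$, this density does not exist for singular $\kappa$, because $B_s^*\nu_{s-1}\ll\omega$ while $\nu_{s-1}\ll\kappa$. Even for $\kappa=\omega$ nothing makes it bounded, so the claim that the factors lie in $[1/2,3/2]$ is unsupported.

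The missing idea is to let the perturbed transition \emph{kernels} act, not densities. The paper sets $\pi_{(s,s-1)}=\tilde k_s-\varepsilon2^{-s}b_s$ and $\pi_s=\pi_{(1,0)}\circ\cdots\circ\pi_{(s,s-1)}$, and takes $\nu$ to be a weak$^*$ limit of the functionals $f\mapsto\int\Pi_sf\,\mathrm d\kappa$. These kernels have the properties you need:
\begin{itemize}
\item they are positive for small $\varepsilon$, because $\tilde k_s\sim k_{2^{-s}}$ dominates $\varepsilon2^{-s}|b_s|$; this works for every $\kappa$, singular or not;
\item they have mean $1$ by hypothesis 2;
\item the difference equation $\Pi_{s+1}g_{s+1}-\Pi_sg_s=-\varepsilon2^{-s-1}\Pi_sB_{s+1}g_{s+1}$ holds exactly, with no error term.
\end{itemize}

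Your item 2 becomes the $\Phi$-property (Lemma \ref{lem:phi-prop}), applied to the \emph{test function} $B_sg_s=(KC_sg_s)_{2^{-s}}$. It gives $\Pi_{(r,s-1)}B_sg_s\sim B_sg_s$ uniformly in $r$, hence $\int B_sg_s\,\mathrm d\nu\sim\int\Pi_{s-1}B_sg_s\,\mathrm d\kappa$ without any absorption. This is where $C_sg_s\geq0$ is really used: it makes $KC_sg_s$ nonnegative harmonic, which is more than just giving $B_sg_s\geq0$. Your use of hypothesis 3 is exactly the right bound for the perturbation part of one step. The one-step estimate also needs the transport part $|\tilde K_jv_{2^{-k}}-v_{2^{-k}}|\lesssim2^{k-j}v_{2^{-k}}$, and the iteration needs positivity of every $\pi_{(j,j-1)}$.

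Your fallback of harmonic densities would require moving $K_y$ from the test function onto the density, i.e.\ symmetry of $k_y$ with respect to $\omega$. That holds on the ball but fails on Lipschitz domains (compare Lemma \ref{lem:quasisymm}). Keeping all operators on the test functions and passing to the adjoint only in the weak$^*$ limit is what avoids this.
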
		
		We divide the proof into several steps. We begin with some preparation. For $\varepsilon>0$ and $s\in\mathbb N$, define the kernel $\pi_{(s,s-1)}:=\pi_{(s,s-1),\varepsilon}:{\partial D}\times {\partial D}\longrightarrow\mathbb{R}$ as
		\begin{align*}
			\pi_{(s,s-1)}:=\tilde k_s-\varepsilon 2^{-s}b_s =\tilde k_s-\varepsilon 2^{-s}k_s\circ c_s.
		\end{align*}
		i.e. we perturb the kernel $\tilde k_s$ by the mean-zero kernel $b_s$. By iterating the above kernels we generate the kernels $\pi_{(s,r)}$, $s>r\geq0$:
		\begin{align*}
			\pi_{(s,r)}:=\pi_{(r+1,r)}\circ\pi_{(r+2,r+1)}\circ\cdots\circ\pi_{(s,s-1)}.
		\end{align*}
		For $r=0$ and $s\geq1$ we write $\pi_{(s,0)}=:\pi_s$. If $\varepsilon>0$ is chosen sufficiently small, the kernels $(\pi_s)_s$ will all be positive. To see this, first observe that by property 1 of the kernel $c_i$ we have
		\begin{align}\label{eq:NaiveIneq}
				\pi_{(i,i-1)}= \tilde k_i -\varepsilon 2^{-i}b_i\geq \tilde k_i -a_1\varepsilon k_{2^{-i}} 
		\end{align}
		for any $i\in\mathbb{N}$, where $a_1$ only depends on ${\partial D}$. Recalling that $\tilde k_i\sim k_{2^{-i}}$, \eqref{eq:NaiveIneq} reads as
		\begin{align}\label{eq:NaiveIneq2}
			\pi_{(i,i-1)}\geq q\tilde k_i
		\end{align}
		where $q=(1-a_2\varepsilon)>0$ for $\varepsilon$ small enough. Thus iterating estimate \eqref{eq:NaiveIneq2} we obtain $\pi_{s}\geq q^{s}k_{1-\frac{1/2}{1 -2^{-s}}}>0$.\par
		Since the kernel $b_s$ has zero mean,
		\begin{align*}
			\int_{\partial D}\pi_{(i,i-1)}(x,\xi)\mathrm d\omega(\xi)=1
		\end{align*}
		holds for any $i\in\mathbb N$. Thus the kernels $\pi_{(s,r)}$ also have mean 1, by Fubini.
		We now state the two key lemmas.
		\begin{lemma}[Difference equation of $\Pi_{s}$]
		\label{lem:diff_eq} The identity
		\begin{align*}
			\Pi_{s+1}g_{s+1}-\Pi_{s} g_s = -\varepsilon 2^{-s-1}\Pi_{s} B_{s+1} g_{s+1}
		\end{align*}
		holds for all $s\in\mathbb N$.
		\end{lemma}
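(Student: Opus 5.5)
The identity follows from the recursive structure of $\pi_{s+1}$ and the compatibility condition \eqref{cond:gs} between consecutive $g_s$. By definition of the iterated kernels, $\pi_{s+1}=\pi_{(1,0)}\circ\cdots\circ\pi_{(s,s-1)}\circ\pi_{(s+1,s)}=\pi_s\circ\pi_{(s+1,s)}$. The composition $\circ$ is associative by Fubini; positivity and the bounds $|c_s|\lesssim 2^sk_{2^{-s}}$ make all integrals absolutely convergent. On the operator level this reads $\Pi_{s+1}=\Pi_s\,\Pi_{(s+1,s)}$. I then insert the definition
\begin{align*}
\pi_{(s+1,s)}=\tilde k_{s+1}-\varepsilon 2^{-s-1}b_{s+1},
\end{align*}
which gives
\begin{align*}
\Pi_{s+1}g_{s+1}=\Pi_s\tilde K_{s+1}g_{s+1}-\varepsilon2^{-s-1}\Pi_sB_{s+1}g_{s+1}.
\end{align*}

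The first condition in \eqref{cond:gs}, $\tilde K_{s+1}g_{s+1}=g_s$, turns the first term into $\Pi_s g_s$. Subtracting $\Pi_sg_s$ from both sides yields exactly the claimed difference equation.

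The only step requiring care is justifying linearity and associativity: splitting $\Pi_s(\tilde K_{s+1}-\varepsilon2^{-s-1}B_{s+1})g_{s+1}$ into two separately finite terms, and rewriting $\pi_s\circ(\pi_{(s+1,s)}\circ g_{s+1})$ as $(\pi_s\circ\pi_{(s+1,s)})\circ g_{s+1}$. Since $g_{s+1}\ge0$ lies in $L^1(\mathrm d\omega)$, I bound
\begin{align*}
|b_{s+1}|\le k_{2^{-s-1}}\circ|c_{s+1}|\lesssim 2^{s+1}k_{2^{-s-1}}\circ k_{2^{-s-1}}=2^{s+1}k_{2^{-s-1}\star2^{-s-1}}
\end{align*}
using the semigroup property in Lemma \ref{lem:MartinKernel}. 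Each $k_y$ is bounded for fixed $y\in(0,1)$, because $\xi_y$ stays a positive distance from $\partial D$ (Theorem \ref{lem:DecayMartinKernel}, or simply Harnack). The kernel $\pi_s$ is likewise bounded, since it is dominated by a finite combination of compositions of bounded kernels. Hence every integrand is dominated by a bounded kernel times $g_{s+1}\in L^1(\mathrm d\omega)$. Fubini then applies, which justifies both the splitting and the reassociation, and completes the proof.
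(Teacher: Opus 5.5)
Your proposal is correct and follows the paper's proof exactly: you factor $\Pi_{s+1}=\Pi_s\Pi_{(s+1,s)}$, expand $\pi_{(s+1,s)}=\tilde k_{s+1}-\varepsilon 2^{-s-1}b_{s+1}$, and use $\tilde K_{s+1}g_{s+1}=g_s$. Your extra Fubini justification, based on boundedness of $k_y$ for fixed $y$ and on $g_{s+1}\in L^1(\mathrm d\omega)$, is sound and fills in a point the paper leaves implicit.
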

		\begin{proof}
			The proof is a straightforward computation:
			\begin{align*}
				\Pi_{s+1}g_{s+1}-\Pi_{s} g_s &= \Pi_{s}\left(\Pi_{(s+1,s)}g_{s+1}-g_s\right)\\
				&=\Pi_{s}\left(-\varepsilon 2^{-s-1}B_{s+1}g_{s+1}+\tilde K_{s+1}g_{s+1}-g_s\right)\\
				&=-\varepsilon 2^{-s-1} \Pi_{s} B_{s+1} g_{s+1}
			\end{align*}
		\end{proof}
		\begin{lemma}[$\Phi$-property]
			\label{lem:phi-prop}
			Let $\varepsilon\in(0,1)$, $s>r\geq k$, $s,r,k\in\mathbb{N}$ and $v$ nonnegative harmonic on $ D$. Then
			\begin{align*}
				\Pi_{(s,r)}v_{2^{-k}}\sim v_{2^{-k}}.
			\end{align*}
		\end{lemma}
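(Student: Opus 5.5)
Write $\Pi_{(s,r)}=\Pi_{(r+1,r)}\Pi_{(r+2,r+1)}\cdots\Pi_{(s,s-1)}$ and apply the factors to $v_{2^{-k}}$ from the innermost one outward. Each factor $\Pi_{(i,i-1)}=\tilde K_i-\varepsilon2^{-i}K_{2^{-i}}C_i$ splits into an unperturbed part and a perturbation. The unperturbed part acts on radial dilations of harmonic functions by pure dilation. By Lemma \ref{lem:MartinKernel}(1) and $y(i)\star 2^{-i}=2^{-i+1}$ we have
\begin{align*}
\tilde K_i v_{t}=v_{y(i)\star t},
\end{align*}
so composing $\tilde K_{r+1}\cdots\tilde K_s$ and applying it to $v_{2^{-k}}$ gives $v_{2^{-k}\star y(s)\star\cdots\star y(r+1)}=v_{2^{-k}\star\tilde y}$ with $1-\tilde y=\frac{1-2^{-r}}{1-2^{-s}}$. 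Since $2^{-k}\geq 2^{-r}$, the point $\xi_{2^{-k}\star\tilde y}$ stays within distance comparable to $2^{-k}$ of $\xi_{2^{-k}}$, both lying on the ray through $\xi$ at depth $\sim 2^{-k}$. Harnack (Theorem \ref{thm:Harnack_ineq}) then gives $v_{2^{-k}\star\tilde y}\sim v_{2^{-k}}$ uniformly. The task is therefore to show that the perturbations change this only by a bounded multiplicative factor.

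The plan is an induction on the number of factors, peeling off from the left. Set $w_i:=\Pi_{(i,r)}$-type partial products. Cleaner is to write $h^{(s)}:=v_{2^{-k}}$ and $h^{(i-1)}:=\Pi_{(i,i-1)}h^{(i)}$ for $i=s,\dots,r+1$, so that $\Pi_{(s,r)}v_{2^{-k}}=h^{(r)}$. The difficulty is that $h^{(i)}$ is no longer harmonic-dilated, so property 3 of the main lemma cannot be applied to it directly. I will instead expand the product. We have $\Pi_{(i,i-1)}=\tilde K_i(I-\varepsilon 2^{-i}\tilde K_i^{-1}K_{2^{-i}}C_i)$, which is formal. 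A better route uses positivity. The kernels $\Pi_{(i,i-1)}$ are positive by \eqref{eq:NaiveIneq2} and are dominated by $(1+a_2\varepsilon)\tilde k_i$, by the same computation with the sign reversed. Hence every $\Pi_{(j,i)}$ is a positive operator sandwiched between multiples of the dilation operators. Then I write
\begin{align*}
\Pi_{(s,r)}v_{2^{-k}}=\tilde K_{r+1}\cdots\tilde K_s v_{2^{-k}}-\sum_{i=r+1}^{s}\varepsilon2^{-i}\,\Pi_{(i-1,r)}K_{2^{-i}}C_i\,\tilde K_{i+1}\cdots\tilde K_s v_{2^{-k}},
\end{align*}
which is a telescoping identity of the same type as Lemma \ref{lem:diff_eq}. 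Here $\tilde K_{i+1}\cdots\tilde K_s v_{2^{-k}}=v_{t_i}$ with $t_i\sim 2^{-k}$, and by Harnack $v_{t_i}$ is $u_{2^{-k}}$ for some nonnegative harmonic $u$ (namely $u=v_{t_i'}$ with $t_i'\star 2^{-k}=t_i$). Property 3 then gives $|C_iv_{t_i}|\le C2^{\theta k+(1-\theta)i}v_{t_i}$. Applying the positive operators $K_{2^{-i}}$ and $\Pi_{(i-1,r)}$, and using the upper bound $\Pi_{(i-1,r)}\lesssim(1+a_2\varepsilon)^{i-r}\tilde K\cdots$, which together with Lemma \ref{lem:MartinKernel} returns a dilation $\sim v_{2^{-k}}$, the $i$-th error term is bounded by
\begin{align*}
\varepsilon C2^{-i}2^{\theta k+(1-\theta)i}(1+a_2\varepsilon)^{i-r}v_{2^{-k}}=\varepsilon C2^{-\theta(i-k)}(1+a_2\varepsilon)^{i-r}v_{2^{-k}}.
\end{align*}

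The main obstacle is this exponential factor $(1+a_2\varepsilon)^{i-r}$, which is too crude. I would avoid it with a bootstrap. I prove by downward induction the two-sided bound $A^{-1}v_{2^{-k}}\le \Pi_{(j,r)}v_{2^{-k}}\le A v_{2^{-k}}$ for all $v$ and all $j\ge r\ge k$, with a constant $A$ independent of $j$. In the error term I bound $\Pi_{(i-1,r)}$ applied to the nonnegative harmonic dilate $K_{2^{-i}}v_{t_i}=v_{2^{-i}\star t_i}$ by the induction hypothesis, since the index gap $i-1-r$ is smaller than $s-r$. This yields an error of at most $\varepsilon CA\sum_{i>r}2^{-\theta(i-k)}v_{2^{-k}}\le \varepsilon CA\,c_\theta v_{2^{-k}}$. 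The main term is $\sim v_{2^{-k}}$ with Harnack constant $H$. Choosing $A=2H$ and $\varepsilon$ small so that $\varepsilon CAc_\theta\le (2H)^{-1}$ closes the upper bound, and the lower bound follows as $H^{-1}-\varepsilon CAc_\theta\ge(2H)^{-1}$. The delicate point is checking that the Harnack constants for the various dilates $v_{2^{-i}\star t_i}$ compared with $v_{2^{-k}}$ are uniform. This holds because all the depths involved are comparable to $2^{-k}$ when $i>r\ge k$. The smallness of $\varepsilon$ would be absorbed into the range $\varepsilon<\varepsilon(\partial D)$ of the main lemma.
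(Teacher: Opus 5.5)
Your argument is correct, but it takes a different route from the paper's.

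\textbf{What the paper does.} It compares each factor $\Pi_{(j,j-1)}$ with the identity rather than with $\tilde K_j$. The mean value theorem and the gradient bound give $|\tilde K_j v_{2^{-k}}-v_{2^{-k}}|\lesssim 2^{k-j}v_{2^{-k}}$, and property 3 bounds the perturbation by $\lesssim\varepsilon 2^{\theta(k-j)}v_{2^{-k}}$. Together these give $(1-C2^{\theta(k-j)})v_{2^{-k}}\le\Pi_{(j,j-1)}v_{2^{-k}}\le(1+C2^{\theta(k-j)})v_{2^{-k}}$. Since the bound is a multiple of the \emph{same} function $v_{2^{-k}}$, positivity of the next factor lets the estimate be iterated, and the convergent product $\prod_j(1\pm C2^{\theta(k-j)})$ finishes the proof. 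The first $m(\theta)$ factors of the lower bound, where $1-C2^{\theta(k-j)}$ may be negative, are handled by the crude bound \eqref{eq:NaiveIneq}. So the obstacle you identified, that the partial products are no longer dilates of harmonic functions, never arises there. The per-step decay $2^{\theta(k-j)}$ also avoids your exponential factor $(1+a_2\varepsilon)^{i-r}$.

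\textbf{What your route does.} Your expansion around $\tilde K_{r+1}\cdots\tilde K_s$ replaces the gradient estimate by one Harnack comparison and needs no $m(\theta)$ trick. However, the error terms contain $\Pi_{(i-1,r)}$ acting on new dilates $v_{2^{-i}\star t_i}=(v_\tau)_{2^{-k}}$, with $1-\tau=(1-2^{-i})^2/(1-2^{-s})$, and this forces the induction on the gap $s-r$.

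\textbf{What each approach buys.} Your induction closes only under an extra smallness condition $\varepsilon\lesssim (CH^2c_\theta)^{-1}$, which depends on $\theta$. The paper needs only the range of $\varepsilon$ in which the kernels $\pi_{(i,i-1)}$ are positive; it uses this implicitly despite stating $\varepsilon\in(0,1)$. In exchange, your comparability constant $A=2H$ is independent of both $\theta$ and $\varepsilon$, whereas the paper's constants depend on $\theta$ (and on $\varepsilon$ through $q^m$). The extra restriction is harmless for the applications: $\varepsilon(\partial D)$ can be shrunk, and $\theta=1$ in Section \ref{sec:ConVar}. Strictly speaking, though, you prove the lemma on a smaller range of $\varepsilon$.

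\textbf{Minor points.} The identity $v_{t_i}=(v_{t_i'})_{2^{-k}}$ is exact by star-shapedness, not a consequence of Harnack. Your ``downward induction'' is really an induction on the gap $j-r$.
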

		\begin{proof}
			Let $j\in\mathbb{N}$ such that $r\leq j\leq s$. We have
			\begin{align*}
				|(\Pi_{(j,j-1)}v_{2^{-k}})(x) -v_{2^{-k}}(x)|&\leq\left|(\tilde K_{j}v_{2^{-k}})(x)-v_{2^{-k}}(x)\right|\\
				&+\varepsilon 2^{-j}\int_{\partial D} k_{2^{-j}}(x,\xi)\left|\int_{\partial D}c_j(\xi,\zeta)v_{2^{-k}}(\zeta)\mathrm d \omega(\zeta)\right|\mathrm{d}\omega(\xi):=\Romannum{1}+\Romannum{2}.
			\end{align*}
			Invoking \eqref{eq:c_main_prop}, we estimate
			\begin{align*}
				\Romannum{2}\lesssim\varepsilon2^{\theta(k-j)}\int_{\partial D} k_{2^{-j}}(x,\xi)v_{2^{-k}}(\xi)\mathrm d\omega(\xi)\sim\varepsilon2^{\theta(k-j)}v_{2^{-k}}(x).
			\end{align*}
			By the mean value theorem and the pointwise gradient bound $|\nabla v(x_{2^{-k}})|\lesssim 2^kv(x_{2^{-k}})$, the first term satisfies $\Romannum{1}\lesssim 2^{k-j}v_{2^{-k}}(x)$. In total this yields
			\begin{align*}
				(1-C2^{\theta(k-j)})v_2^{-k}\leq\Pi_{(j,j-1)}v_{2^{-k}}\leq (1+C2^{\theta(k-j)})v_{2^{-k}}.
			\end{align*}
			for some constant $C>0$ only depending on $D$. Observe that now 
			\begin{align*}
				\Pi_{(s,r)}v_{2^{-k}}=\Pi_{(r+1,r)}\cdots\Pi_{(s,s-1)}v_{2^{-k}}&\leq \prod_{j=r+1}^{s}(1+c2^{\theta(k-j)})v_{2^{-k}}\leq C_1 v_{2^{-k}}
			\end{align*}
			where $C_1>0$ only depends on $D$ and $\theta$. For the reverse inequality choose the smallest $m=m(\theta)\geq0$ such that $C2^{-\theta m}<1$. Then, since $k-(r+m)\leq-m$, we have
			\begin{align*}
				\Pi_{(s,r)}v_{2^{-k}}&\geq \prod_{j=r+m}^{s}(1-C2^{\theta(k-j)})\Pi_{(r+1,r)}\cdots\Pi_{(r+m-1,r+m-2)} v_{2^{-k}}.
			\end{align*}
			Using \eqref{eq:NaiveIneq} one can check that 
			\begin{align*}
				\Pi_{(r+1,r)}\cdots\Pi_{(r+m-1,r+m-2)} v_k\geq q^{m-2} v_{1-a}
			\end{align*}
			where $a=\frac{(1-2^{-r})(1-2^{-k})}{1-2^{-r-m+1}}$. Since $1-a\sim2^{-k}$ we have $v_{1-a}\sim v_{2^{-k}}$ and thus
			\begin{align*}
				\Pi_{(s,r)}v_{2^{-k}}\geq\prod_{j=r+m}^{s}(1-C2^{\theta(k-j)})q^m v_{1-a}\geq C_2v_{2^{-k}}
			\end{align*}
		\end{proof}
		We begin the construction of the measure $\nu_{\varepsilon}$. Fix a bounded nonnegative Borel measure $\kappa$ on $\partial D$. The integral operators $(\Pi_s)_{s\in\mathbb N}$ canonically induce functionals $F_s\in C(\partial D)^*$ by the rule $f\mapsto \int_{\partial D}\Pi_sf\mathrm d\kappa$. Since $\Pi_s$ is nonnegative and has mean 1 on $\partial D$ with respect to $\mathrm d\omega$, $\|F_s\|_{C(\partial D)^*}=\kappa(\partial D)$. By the Banach-Alaoglu theorem, the sequence $(F_s)_s$ has a weak* convergent subsequence which we do not relabel for convenience. Let $\nu_{\varepsilon,\kappa}$ be the Borel measure representing this limit. By the above we have 
		\begin{align*}
			\forall f\in C({\partial D}):\quad\int_{\partial D}f\mathrm{d}\nu_{\varepsilon,\kappa}= \lim_{s\rightarrow\infty}\int_{\partial D}\Pi_{s}f\mathrm{d}\kappa.
		\end{align*}
		In particular, $\nu_{\varepsilon,\kappa}(\partial D)=\kappa(\partial D)$. We can now prove Lemma \ref{lem:main_lemma}.
		\begin{proof}[Proof of Lemma \ref{lem:main_lemma}]
			First observe that $B_sg_s\in C({\partial D})$ for all $s\in\mathbb N$. By the definition of $\nu_{\varepsilon,\kappa}$ we get 
			\begin{align}\label{eq:testing}
				\int_{\partial D}\sum_{s=1}^{N}2^{-s}B_sg_s\mathrm{d}\nu_{\varepsilon,\kappa} = \lim_{r\rightarrow\infty}\int_{\partial D}\sum_{s=1}^{N}2^{-s} \Pi_{r}B_sg_s\mathrm{d}\kappa
			\end{align}
			Rewrite $\Pi_r=\Pi_{s-1}\Pi_{(r,s-1)}$. The function $v(z):=(KC_sg_s)(z)$ is nonnegative harmonic on $D$ and $v_{2^{-s}}(x)=( K_sC_sg_s)(x)=(B_sg_s)(x)$ for all $x\in\partial D$. Therefore the $\Phi$-property yields $\Pi_{(r,s-1)}B_sg_s\sim B_sg_s$ and thus by positivity of $\pi_r$ we have $\Pi_{r}B_sg_s\sim \Pi_{s-1}B_sg_s$ if $s\geq2$. Similarly, $\Pi_rB_1g_1\sim\Pi_1B_1g_1$. Substituting this into \eqref{eq:testing}, we obtain
			\begin{align*}
				\int_{\partial D}\sum_{s=1}^{N}2^{-s}B_sg_s\mathrm{d}\nu_{\varepsilon,\kappa}\sim \int_{\partial D}\left[\frac{1}{2}\Pi_1B_1g_1+\sum_{s=2}^{N}2^{-s} \Pi_{s-1}B_sg_s\right]\mathrm{d}\kappa
			\end{align*}
			for any $N>0$. Lemma \ref{lem:diff_eq} shows that 
			\begin{align*}
				\varepsilon2^{-s}\Pi_{s-1}B_sg_s=\Pi_{s-1}g_{s-1}-\Pi_{s} g_s,
			\end{align*}
			Thus, by telescoping 
			\begin{align*}
				\varepsilon\sum_{s=2}^{N}2^{-s} \Pi_{s-1}B_sg_s=\Pi_1g_1-\Pi_{N-1}g_{N-1}
			\end{align*}
			Since $g_{N-1}$ and $\pi_{N-1}$ are nonnegative, $\Pi_{1}g_1-\Pi_{N-1}g_{N-1}\leq\Pi_1g_1$. In combination with the estimates $\pi_1\lesssim k_{1/2}$ and $\pi_1\circ b_1\lesssim k_{1/2}$, we thus obtain
			\begin{align*}
				\int_{\partial D}\sum_{s=1}^{N}2^{-s}B_sg_s\mathrm{d}\nu_{\varepsilon,\kappa}\lesssim\varepsilon^{-1} \int_{\partial D}K_{1/2}g_1\mathrm{d}\kappa.
			\end{align*}
		\end{proof}
		
	\section{Proof of Theorem \ref{thm:MainThm2}}\label{sec:ConVar}
	For the proof, we want to employ Lemma \ref{lem:main_lemma}. We first construct a suitable cone for every $\xi\in\partial D$.  Fix a Whitney decomposition $\mathcal{W}$ of $D$ consisting of axis-parallel dyadic cubes such that the condition
	
	\begin{align}\label{eq:Whitney}
		\forall W\in\mathcal{W}:\quad \text{diam}W\leq d(W,\partial D)\leq 4\text{diam} W
	\end{align}
	is satisfied. Furthermore, if $W\in\mathcal{W}$ intersects $\Omega_s:=\lbrace z\in D:2^{-s}\leq d(z,D^c)\leq 2^{-s+1}\rbrace$, then $W$ has sidelength $2^{-s}$. Fix some  $\alpha\in(0,1)$. By $\Gamma_{\mathrm{dc}}(\xi)$ we denote the discrete cone
	\begin{align*}
		\bigcup_{s\in\mathbb N}\mathcal W_s(\xi)
	\end{align*}
	where
	\begin{align*}
		\mathcal{W}_s(x):=\lbrace W  \in \mathcal{W}: W\cap\Omega_s\neq\emptyset\text{ and }\delta(W)\geq\alpha\delta(W,\xi)\rbrace. 
	\end{align*}
	Observe that $\alpha$ close to $1$ corresponds to a narrow cone and $\alpha$ close to $0$ to a wide cone. Obviously, if $z\in\Gamma_{\mathrm{dc}}(\xi)$, then $\delta(z)\sim_\alpha |z-\xi|\sim_\alpha2^{-s}$ for some $s\in\mathbb N$. We remark moreover that for $\alpha>0$ small enough, the discrete cone $\Gamma_{\mathrm{dc}}(\xi)$ contains the radial cone $\Gamma_{\beta_0}(\xi)$ previously constructed. Recall that $\Gamma_{\beta_0}(\xi)$ denotes the cone with apex $\xi$ that opens up towards the origin at an angle $\beta_0$ around the axis $\xi$ and has height $|\xi|$, see Figure \ref{fig:StarShapeDomain}. With this choice of cones, the tasks lies in establishing finiteness of
	\begin{align}\label{eq:DivergenceConicalEstimate}
		\int_{\Gamma_{\mathrm{dc}}(\xi)}|\nabla u(z)|\delta(z)^{1-d} \mathrm dz\sim\sum_{s\in\mathbb N}\sum_{W\in\mathcal W_s(\xi)}2^{-s(1-d)}\int_W|\nabla u(z)|\mathrm dz
	\end{align}
	for many $\xi\in\partial D$. Clearly the right side of \eqref{eq:DivergenceConicalEstimate} can be written as
	\begin{align}\label{eq:discreteConicalVar}
		\sum_{s=0}^{\infty}2^{-s}\sum_{W\in\mathcal{W}_s(\xi)}\dashint_W|\nabla u(z)|\mathrm{d}z.
	\end{align}
 	We will tackle the problem of finding a Bourgain point for the conical variation $\xi\in\partial D$ by investigating weighted summability of the averages of $|\nabla u|$ on Whitney cubes that approach $\xi$ in a prescribed cone. The next theorem states the main results on star-shaped domains. 
	\begin{theorem}\label{thm:MainThm3}
		Suppose $D$ is star-shaped with respect to the origin and Lipschitz. Then there exists $\xi\in \partial D$ with
		\begin{align*}
			V_c(\xi,u):=\sum_{s=0}^{\infty}2^{-s}\sum_{W\in\mathcal{W}_s(\xi)}\dashint_W|\nabla u(z)|\mathrm{d}z<\infty
		\end{align*}
		Moreover, the set
		\begin{align*}
			\mathcal B_c'(u, D):=\left\lbrace\xi\in\partial D:V_c(\xi,u)<\infty\right\rbrace
		\end{align*}
		is dense in $\partial D$ and there exists $\gamma\in(0,1]$ such that $\dim(B\cap\mathcal B_c'(u,D))\geq d-2+\gamma$ for any ball $B$ with center on $\partial D$. If $D$ is $C^{1,\mathrm{Dini}}$ we can choose $\gamma=1$, i.e. $\mathcal{B}_c'(u, D)$ is ultradense in $\partial D$.
	\end{theorem}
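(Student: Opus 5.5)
The plan is to apply the Main Lemma (Lemma \ref{lem:main_lemma}) with kernels $c_s$ that detect the Whitney averages of $|\nabla u|$ in the cone. We then read off a measure $\nu$ on which $V_c(\cdot,u)$ is integrable, hence finite $\nu$-a.e. The dimension bounds come from the energy/decay properties of $\nu$.

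\textbf{Step 1: choice of $c_s$ and $g_s$.} We take $g_s:=u_{2^{-s}}$. The compatibility $\tilde K_{s+1}g_{s+1}=g_s$ is then exactly Lemma \ref{lem:MartinKernel}(1), since $y(s+1)\star 2^{-s-1}=2^{-s}$. For $c_s$ we use a signed "derivative" kernel at scale $2^{-s}$: for $x\in\partial D$ and each $W\in\mathcal W_s(x)$, set
\begin{align*}
c_s(x,\xi):=\sigma_s(x)\cdot 2^{s}\bigl(k_{2^{-s}}(x',\xi)-k_{2^{-s}}(x'',\xi)\bigr),
\end{align*}
where $x',x''$ are suitable boundary points chosen so that $(x')_{2^{-s}},(x'')_{2^{-s}}$ lie in the Whitney cubes of $\mathcal W_s(x)$, and $\sigma_s$ is a sign chosen so that $C_sg_s\ge0$. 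A more robust variant is to let $c_s$ be $2^s$ times a difference of $k$ at two nearby points in the direction of $\nabla u$ averaged over $W$, or a sum of positive and negative parts, so that $B_sg_s\gtrsim 2^{s}\cdot 2^{-s}\dashint_W|\nabla u|$ summed over the boundedly many $W\in\mathcal W_s(x)$. Property 2 (mean zero) follows from $K_y1=1$. Property 1 follows from Harnack, since the points involved are at distance $\sim 2^{-s}$ from the boundary and within $\sim 2^{-s}$ of each other. Property 3 is a H\"older-type smoothness estimate for $v_{2^{-k}}$ at scale $2^{-j}\le 2^{-k}$. Concretely, $|v(a)-v(b)|\lesssim(2^{k-j})^{\theta}v_{2^{-k}}$ for points $a,b$ at distance $2^{-j}$ and depth $\gtrsim 2^{-j}$ near $x_{2^{-k}}$. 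This comes from the boundary Harnack principle together with H\"older continuity of quotients of positive harmonic functions (Lemma \ref{lem:TypeSEst}/Theorem \ref{lem:DecayMartinKernel}). This gives $\theta=\gamma$ in the Lipschitz case and $\theta=1$ in the $C^{1,\mathrm{Dini}}$ case, via the gradient bound up to the boundary.

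\textbf{Step 2: the key lower bound.} We need $2^{-s}B_sg_s(x)\gtrsim 2^{-s}\sum_{W\in\mathcal W_s(x)}\dashint_W|\nabla u|$, perhaps after replacing the right side by a comparable quantity on a slightly shrunken cone. We obtain this by writing $B_sg_s=(K C_s g_s)_{2^{-s}}$, which is a harmonic function evaluated at depth $2^{-s}$. Using the semigroup identity, $C_sg_s$ equals $2^s$ times a difference of values of $u$ at depth $2^{-s+1}$ (roughly), so Harnack and the mean value property convert it into averages of a directional derivative. Positivity $C_sg_s\ge0$ is forced by the sign choice $\sigma_s$. \emph{This step is the main obstacle}: $|\nabla u|$ is not a harmonic quantity. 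The intended approach is to decompose $\nabla u$ into $d$ coordinate directions and each into positive and negative parts, yielding $2d$ separate families $c_s^{(i,\pm)}$, each satisfying the hypotheses. We then sum the resulting measures, or iterate the lemma, obtaining a common $\nu$ by applying the construction to the sum of the kernels.

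\textbf{Step 3: conclusion and dimension.} The Main Lemma gives $\int V_c\,\mathrm d\nu\lesssim\varepsilon^{-1}\int K_{1/2}u_{1/2}\,\mathrm d\kappa<\infty$, so $V_c<\infty$ $\nu$-a.e. and $\nu\ne0$ yields a Bourgain point. For density we take $\kappa$ supported in $B\cap\partial D$, and use the $\Phi$-property and the decay estimate of Theorem \ref{lem:DecayMartinKernel} to show that $\nu$ charges $B$. For dimension, Lemma \ref{lem:phi-prop} applied to $v=$ Poisson/Martin extensions of indicators of boundary balls $\Delta$ of radius $2^{-k}$ gives $\nu(\Delta)\lesssim \omega(\Delta)\cdot(\text{factor }e^{C\varepsilon k})$. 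Combined with Lemma \ref{lem:Dahlberg} and Lemma \ref{lem:TypeSEst} we get $\omega(\Delta)\lesssim r^{d-2+\gamma}$, hence $\nu(\Delta)\lesssim r^{d-2+\gamma-C\varepsilon}$. Frostman's lemma then gives $\dim\ge d-2+\gamma-C\varepsilon$, and letting $\varepsilon\to0$ finishes the proof. In the $C^{1,\mathrm{Dini}}$ case, $\omega\sim$ surface measure by Lemma \ref{lem:harmeasVShausdorff}, so $\gamma=1$.
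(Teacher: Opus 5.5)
The gap is in Steps 1--2, exactly where you locate the ``main obstacle'', and your coordinate-wise repair does not close it.

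\textbf{Your kernel does not control the Whitney averages.} With your kernel, Lemma \ref{lem:MartinKernel} gives
$C_sg_s(x)=2^s|u(a)-u(b)|=2^s\bigl|\int_0^1\langle\nabla u(b+t(a-b)),a-b\rangle\,\mathrm dt\bigr|$
for two fixed points $a,b$ at depth $\sim2^{-s}$. This sees one direction only, with the absolute value outside the average, so it cannot dominate $\sum_{W\in\mathcal W_s(x)}\dashint_W|\nabla u|$. For $u=M+\langle e,z\rangle$ with $e\perp a-b$ and $M$ large, $C_sg_s(x)=0$ while $|\nabla u|\equiv|e|$. For any finite family of prescribed pairs (your $2d$ coordinate versions included) there is a non-constant harmonic polynomial $h$ with $h(a_i)=h(b_i)$ for all $i$, so $u=M+h$ is again a counterexample. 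The parts $(\partial_iu)^{\pm}$ are not harmonic, so they cannot come from differences of point values of $u$. Running the Main Lemma separately for several families and adding the measures also fails, since a point that is good for one measure need not be good for another.

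\textbf{The missing idea.} Let the kernel depend on $u$ pointwise inside the cubes, and differentiate the Martin kernel:
\begin{align*}
c_s(x,\xi)=\sum_{W\in\mathcal W_s(x)}\dashint_W\bigl\langle(\nabla^1k)(z/\rho_s,\xi),\sigma(z)\bigr\rangle\,\mathrm dz,\qquad\sigma=\frac{\nabla u}{|\nabla u|},\quad\rho_s=1-2^{-s}.
\end{align*}
The harmonic extension of $u_{2^{-s}}$ is $w\mapsto u(\rho_sw)$, so Fubini gives exactly $C_su_{2^{-s}}(x)=\rho_s\sum_{W\in\mathcal W_s(x)}\dashint_W|\nabla u|$. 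Hence $C_sg_s\geq0$ and the lower bound are automatic, and the non-harmonicity of $|\nabla u|$ never matters. Your positive/negative-part idea works only in this form: the signs $\mathrm{sgn}\,\partial_iu(z)$ must sit inside the average, and all families must be added into a single kernel. Alternatively, a two-point kernel $2^s\bigl(k(a/\rho_s,\xi)-k(b/\rho_s,\xi)\bigr)$ works if $a,b$ are chosen depending on $u$, as a maximum and a minimum point of $u$ on a $c2^{-s}$-neighbourhood of $W_s(x)$, because $|\nabla u(z)|\le\frac dr\sup_{B(z,r)}|u-u(z)|$.

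With either kernel, properties 1--3 follow from the interior gradient bound and Harnack alone. Property 3 even holds with $\theta=1$ on every Lipschitz domain. In $C_jv_{2^{-k}}$ the gradient of $v$ is evaluated at the points $\rho_kz/\rho_j$, which have depth $\sim2^{-k}$, not merely $\gtrsim2^{-j}$. So no boundary Harnack or H\"older continuity is needed.

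\textbf{Second gap: passing from $B_sg_s(x)$ to the cone at $x$.} Even with the right $c_s$, the Main Lemma controls $\sum_s2^{-s}B_sg_s$. Here $B_sg_s(x)=\int_{\partial D}k(x_{2^{-s}},\xi)\,C_sg_s(\xi)\,\mathrm d\omega(\xi)$ averages the cone quantities over apexes $\xi$ near $x$; it is not the quantity at $x$. Your Step 2 only discusses $C_sg_s$, and Step 3 tacitly uses $V_c(x,u)\lesssim\sum_s2^{-s}B_sg_s(x)$. This needs proof, and in the paper it is Lemma \ref{lem:tec}:
\begin{itemize}
\item by Fubini, the average equals $\int f(z)\,\omega^{x_{2^{-s}}}(V_s(z))\,\mathrm dz$ with $V_s(z)=\{\xi:z\in W_s(\xi)\}$;
\item for $z\in W_s(x)$, the set $V_s(z)$ contains a surface ball of radius $2^{-s-1}$ at distance $\lesssim2^{-s}$ from $x$ (this is where $\alpha$ small is used);
\item Lemma \ref{lem:Carleson} then gives $\omega^{x_{2^{-s}}}(V_s(z))\gtrsim1$.
\end{itemize}

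\textbf{Step 3.} The rest of your Step 3 matches the paper.
\begin{itemize}
\item Dimension: Carleson, the $\Phi$-property, $\pi_l\lesssim(1+a\varepsilon)^lk_{1-\frac{1/2}{1-2^{-l}}}$, Lemma \ref{lem:Dahlberg}, Lemma \ref{lem:TypeSEst} and the mass distribution principle.
\item Density: restricting $\kappa$ to $B$ is not what makes $\nu$ charge $B$. The paper keeps $\kappa=\omega$ and tests $\nu_\varepsilon$ against $\xi\mapsto\omega^{\xi_{2^{-l}}}(\Delta_{r/2}(x))$ at a scale $2^{-l}\ll r$. The lower bound uses the $\Phi$-property and $\pi_l\ge q^lk_{1-\frac{1/2}{1-2^{-l}}}$; the complement is controlled by Martin kernel decay. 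One takes $\varepsilon$ small so that $1-a\varepsilon>2^{-\alpha}$, then $l$ large.
\end{itemize}
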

	
	We now present the passage from star-shaped Lipschitz to general bounded Lipschitz domains, i.e. we prove Theorem \ref{thm:MainThm2} using Theorem \ref{thm:MainThm3} 
	\begin{proof}[Proof of Theorem \ref{thm:MainThm2}]
		Let us begin with the case where $D$ is Lipschitz . Note that there exist scalars $t_0,a,b>0$ such that for any $x_0\in\partial D$, we can rotate and translate $\mathbb R^d$ so that $x_0$ is mapped to the origin and the set $D'=D\cap U$ is Lipschitz and star-shaped with center at $(0,bt_0)$, where 
		\begin{align*}
			U=\left\lbrace(x',x_d)\in\mathbb R^{d-1}\times\mathbb R:|x'|\leq t_0,|x_d|\leq at_0\right\rbrace,
		\end{align*}
		Observe that it suffices to consider balls $B$ with radius $r<t_0/4$ whose center lies on $\partial D$. Let $B=B(x_0,r)$ be such a ball and construct the domain $D'$ as above. In particular, $B\subset\partial D\cap\partial D'$. By Theorem \ref{thm:MainThm3}, the set $\mathcal{B}_c(u, D')\cap B$ has at least dimension $d-2+\gamma$. We are done since $\mathcal{B}_c(u, D')\cap B\subset \mathcal{B}_c(u, D)\cap B$.\par
		If $D$ is $C^{1,\mathrm{Dini}}$, we have to construct a domain similar to $D'$, that is $C^{1,\mathrm{Dini}}$ as well. The rest of the argument remains unchanged.  To achieve this we construct the set $U'$ by smoothing the edges of $U$ to make it $C^\infty$ and ensuring that the tangential derivatives of $\partial U'$ and of $\partial D$ coincide on the set $\partial D\cap \partial B(x_0,t_0)$. Thus $D\cap U'$ becomes a star-shaped $C^1$ domain. It is now straightforward to verify the defining conditions \eqref{eq:IntDiniCond} and $\eqref{eq:ExtDiniCond}$. (see Section \ref{sec:GreenEstimates}), i.e. $D\cap U'$ is $C^{1,\mathrm{Dini}}$ and star-shaped, see figure \ref{fig:D'}.
	\end{proof}	
	\begin{figure}\label{fig:D'}
		\centering
		\begin{tikzpicture}
			\draw[thick]
			(-3,-1)
			.. controls (-2.7,-0.8) and (-2.3,-0.3) ..
			(-2,-0.25)
			.. controls (-1.7,-0.2) and (-1.3,0.35) ..
			(-1,0.4)
			.. controls (-0.7,0.30) and (-0.3,-0.30) ..
			(0,-0.25)
			.. controls (0.3,-0.20) and (0.7,0.20) ..
			(1,0.25)
			.. controls (1.3,0.30) and (1.7,-0.2) ..
			(2,-0.25)
			.. controls (2.3,-0.1) and (2.7,0.9) ..
			(3,1);
			
			\draw[thick]
			(-2,-0.25)
			.. controls (-2.5,-0.4) and (-2.35,7.5) ..
			(-2.10,8)
			.. controls (-2,8.3) and (2,8.3)..
			(2.1,8)
			.. controls (2.35,7.5) and (2.5,-0.1)..
			(2,-0.25);
			
			\fill (0,-0.25) circle (2pt);
			\node at (0,0){$x_0$};
			\fill (0,6) circle (2pt);
			\node at (0.4,6){$z_0$};
			
			\draw (2,-0.25) arc[start angle=0,end angle=180,radius=2];
			
			\draw[<->] (0,-0.5)--(2,-0.5);
			\node at (1,-0.75){$t_0$};
			
			\node at (-3,8){$D$};
			\node at (1.5,7.5){$D'$};
			\node at (-3.5,-1){$\partial D$};
		\end{tikzpicture}
		\caption{The star-shaped $C^{1,\mathrm{Dini}}$ domain $D'$ with center $z_0$. }
	\end{figure}
	We prepare the proof of Theorem \ref{thm:MainThm3} by identifying the right kernels $(c_s)_s$ needed to apply Lemma \ref{lem:main_lemma} in order to estimate \eqref{eq:discreteConicalVar}. Set $\rho_s:=1-2^{-s}$. 
	\begin{definition}[Definition of $c_s$] 
		
		For $s\in\mathbb{N}$ we define the kernel $c_s:\partial D\times \partial D\longrightarrow\mathbb{R}$ as
		\begin{align}\label{def:csKernel}
			c_s(x,\xi):=\sum_{W\in\mathcal{W}_s(x)}\dashint_{W}\left\langle(\nabla^1k)(z/\rho_s,\xi),\sigma(z)\right\rangle\mathrm{d}z,
		\end{align}
		where 
		\[
		\sigma(z):=
		\begin{cases}
			\frac{\nabla u(z)}{\|\nabla u(z)\|} & \nabla u(z)\neq0\\
			0 & \nabla u(z)=0
		\end{cases}
		\]
		and $\nabla^1$ denotes differentiation with respect to the first variable $z\in D$. Note, that this kernel depends on the function $u$. We also define
		\begin{align*}
			W_s(x):=\bigcup_{W\in\mathcal{W}_s(x)}W.
		\end{align*}
		Also note that by definition of the Whitney decomposition $|W|=2^{-sd}$ if $W\in\mathcal{W}_s(x)$. Since $|\mathcal{W}_s(x)|\sim_\alpha1$, we thus have $|W_s(x)|\sim2^{-sd}$. Moreover observe that if $W$ is a Whitney cube intersecting $\Omega_s$, scaling it by $\rho_s^{-1}$ keeps $W$ well inside $D$ i.e. $W/\rho_s\subset D$ and $\delta(W/\rho_s)\sim 2^{-s}$. Indeed,
		\begin{align*}
			d(w,\partial D)\geq d(\rho_sw,\partial D)- 2^{-s}|w|\geq (\sqrt{d}-1)2^{-s}.
		\end{align*}
		holds true for any $w\in W/\rho_s$.
		\\\\
		The justification for the particular choice of the kernels $c_s$ is given in the next lemma. In short, the integral operator $C_s$ applied to $u_{2^{-s}}$ reproduces the means of $|\nabla u|$ on the Whitney cubes in $\mathcal{W}_s$ that appear in \eqref{eq:discreteConicalVar}.
	\end{definition}
	\begin{lemma}
		\label{lem:cy_properties}
		For any $\varphi\in L^1(\mathrm d\omega)$ we have
		\begin{align*}
			(C_s\varphi)(x)=\sum_{W\in\mathcal W_s(x)}\dashint_{W}\left\langle(\nabla K\varphi)(z/\rho_s),\sigma(z)\right\rangle \mathrm{d}z
		\end{align*}
		In particular for $\phi(\xi)=u_{2^{-s}}(\xi)$, we have 
		\begin{align*}
			(C_su_{2^{-s}})(x)= \rho_s\sum_{W\in\mathcal W_s(x)}\dashint_{W}\left|\nabla u(z)\right|\mathrm{d}z \sim\dashint_{W_s(x)}|\nabla u(z)|\mathrm{d}z
		\end{align*} 
	\end{lemma}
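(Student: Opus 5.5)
The plan is to unwind the definition of $C_s$, interchange the integrals, and then use the reproducing property of the Martin kernel. By definition,
\begin{align*}
(C_s\varphi)(x)=\int_{\partial D}c_s(x,\xi)\varphi(\xi)\,\mathrm d\omega(\xi)=\int_{\partial D}\sum_{W\in\mathcal W_s(x)}\dashint_W\left\langle(\nabla^1k)(z/\rho_s,\xi),\sigma(z)\right\rangle\mathrm dz\,\varphi(\xi)\,\mathrm d\omega(\xi).
\end{align*}
The sum over $\mathcal W_s(x)$ is finite, with $|\mathcal W_s(x)|\sim_\alpha 1$, so it commutes with the $\xi$-integral. The remaining task is to swap $\dashint_W$ and $\int_{\partial D}$ and to pass $\nabla$ through the $\xi$-integral. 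Together these give
\begin{align*}
\int_{\partial D}\nabla^1k(w,\xi)\varphi(\xi)\,\mathrm d\omega(\xi)=\nabla(K\varphi)(w),\qquad w=z/\rho_s.
\end{align*}
Here $K\varphi(w)=\int k(w,\xi)\varphi(\xi)\,\mathrm d\omega(\xi)$ is the harmonic extension. Since $|\sigma|\le1$, the first identity then follows.

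The justification of this step is the only real point, and I expect it to be the main obstacle. As observed after the definition of $c_s$, $W/\rho_s\subset D$ with $\delta(W/\rho_s)\sim2^{-s}$, so the set $\overline{W/\rho_s}$ is a compact subset of $D$. On such a compact set, $w\mapsto k(w,\xi)$ is positive and harmonic for every $\xi$. By Harnack (Theorem~\ref{thm:Harnack_ineq}), $k(w,\xi)\lesssim_s k(z_0,\xi)=1$ uniformly in $\xi$ on this set, since $k=k^{z_0}$ and $k(z_0,\cdot)\equiv1$ (with a chain of Harnack balls depending on $s$). The pointwise gradient bound, Lemma~\ref{lem:GradientBound}, then gives $|\nabla^1k(w,\xi)|\lesssim 2^s k(w,\xi)\lesssim_s1$. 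Hence the integrand is dominated by $C_s|\varphi(\xi)|$, which lies in $L^1(\mathrm d\omega\otimes\mathrm dz|_W)$. Fubini then justifies the swap. Differentiation under the integral follows by dominated convergence applied to difference quotients, using the same uniform gradient bound on a slightly larger compact neighbourhood.

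For the second claim, take $\varphi=u_{2^{-s}}$. By the reproducing property (Lemma~\ref{lem:MartinKernel}, item 1, or directly uniqueness for the Dirichlet problem on $D$ with the continuous datum $u_{2^{-s}}$), $K u_{2^{-s}}(w)=u(\rho_s w)$ for all $w\in D$, since $w\mapsto u(\rho_s w)$ is harmonic on a neighbourhood of $\overline D$ (star-shapedness gives $\rho_s\overline D\subset D$). By the chain rule, $\nabla(Ku_{2^{-s}})(z/\rho_s)=\rho_s(\nabla u)(z)$. Pairing with $\sigma(z)$ gives $\rho_s|\nabla u(z)|$, which is the displayed identity.

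Finally, all $W\in\mathcal W_s(x)$ have the same volume $2^{-sd}$, and their number is comparable to a constant. Hence
\begin{align*}
\sum_{W\in\mathcal W_s(x)}\dashint_W|\nabla u|=2^{sd}\int_{W_s(x)}|\nabla u|,
\end{align*}
using disjointness of the Whitney cubes. Since $|W_s(x)|\sim2^{-sd}$ and $\rho_s\in[1/2,1)$, this is comparable to $\dashint_{W_s(x)}|\nabla u|$.
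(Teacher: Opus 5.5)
Your proposal is correct and follows the paper's proof. Both arguments pass the gradient through the $\xi$-integral, swap $\dashint_W$ and $\int_{\partial D}$ by Fubini, and use $K u_{2^{-s}}(w)=u(\rho_s w)$ together with the chain rule to get $\rho_s|\nabla u(z)|$. The only difference is that you supply the justification the paper leaves implicit. That is the $s$-dependent Harnack bound $|\nabla^1 k(w,\xi)|\lesssim_s 1$ on a compact neighbourhood of $\overline{W/\rho_s}$, which gives the domination by $C_s|\varphi|$ needed for differentiating under the integral and for Fubini. You also spell out the final comparability $\sim\dashint_{W_s(x)}|\nabla u|$ via the equal volumes and disjoint interiors of the Whitney cubes.
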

	\begin{proof}
		 First observe that for any $z\in D$
		\begin{align}
			\label{eq:Cy_prop}
			\begin{aligned}
				\int_{\partial D}\left\langle(\nabla^1k)(z/\rho_s,\xi),\sigma(z)\right\rangle\varphi(\xi)\mathrm{d}\omega(\xi)&=\left\langle\int_{\partial D}(\nabla^1 k)(z/\rho_s,\xi)\varphi(\xi)\mathrm{d}\omega(\xi) ,\sigma(z)\right\rangle\\
				&=\left\langle(\nabla K\varphi)(z/\rho_s) ,\sigma(z)\right\rangle.
			\end{aligned}
		\end{align}
		Using Fubini and \eqref{eq:Cy_prop} we proceed with
		\begin{align*}
			(C_s\varphi)(x)&=\int_{\partial D} c_s(x,\xi)\varphi(\xi)\mathrm{d}\omega(\xi)\\
			&=\sum_{W\in \mathcal W_s(x)}\dashint_{W}\int_{\partial D} \left\langle(\nabla^1k)(z/\rho_s,\xi),\sigma(z)\right\rangle\varphi(\xi)\mathrm{d}\omega(\xi)\mathrm{d}z\\
			&=\sum_{ W\in \mathcal W_s(x)}\dashint_{W}\left\langle(\nabla K\varphi)(z/\rho_s) ,\sigma(z)\right\rangle\mathrm{d}z
		\end{align*}
		
		The choice $\varphi= u_{2^{-s}}$, gives
		\begin{align*}
			\left\langle(\nabla K u_{2^{-s}})(z/\rho_s) ,\sigma(z)\right\rangle=\rho_s\langle\nabla u(z),\sigma(z)\rangle =\rho_s\left|\nabla u(z)\right|
		\end{align*}
		and thus
		\begin{align*}
			(C_s u_s)(x)=\rho_s\sum_{W\in\mathcal W_s(x)}\dashint_{W}\left|\nabla u(z)\right|\mathrm{d}z  .
		\end{align*}
	\end{proof}
	
	 We now verify the conditions of Lemma \ref{lem:main_lemma} for the sequence $(c_s)_s$
	 \begin{lemma}The sequence of kernels $(c_s)_s$ verifies the following three conditions.
	 	\begin{enumerate}
	 		\item$\begin{aligned}
	 			|c_s(x,\xi)|\lesssim 2^sk_{2^{-s}}(x,\xi).
	 		\end{aligned}$
	 		\item$\begin{aligned}
	 			C_s(1)=0.
	 		\end{aligned}$
	 		\item Let $v$ be nonnegative harmonic on $D$ and $j>k$, then
	 		\begin{align*}
	 			\left|\int_{\partial D}c_j(x,\xi)v_{2^{-k}}(\xi)\mathrm d\omega(\xi)\right| \leq C2^kv_{2^{-k}}(x)
	 		\end{align*}
	 	\end{enumerate}
	 \end{lemma}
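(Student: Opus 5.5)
We verify the three conditions one at a time. All of them rest on the interior gradient bound (Lemma \ref{lem:GradientBound}) applied to positive harmonic functions of the form $z\mapsto k(z,\xi)$ or $z\mapsto (Kv_{2^{-k}})(z)$, together with Harnack's inequality (Theorem \ref{thm:Harnack_ineq}) on the scaled Whitney cubes $W/\rho_s$.

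\textbf{Condition 1.} Fix $W\in\mathcal W_s(x)$ and $z\in W$. We showed above that $\delta(z/\rho_s)\sim 2^{-s}$. Since $z\mapsto k(z,\xi)$ is positive harmonic, Lemma \ref{lem:GradientBound} gives
$$|(\nabla^1 k)(z/\rho_s,\xi)|\lesssim 2^s k(z/\rho_s,\xi).$$
Next we compare $z/\rho_s$ with $x_{2^{-s}}=(1-2^{-s})x$. Since $W\in\mathcal W_s(x)$, we have $|z-x|\lesssim_\alpha 2^{-s}$, hence $|z/\rho_s - x_{2^{-s}}|\lesssim 2^{-s}$. Both points lie at distance $\sim 2^{-s}$ from $\partial D$. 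A Harnack chain of bounded length (Theorem \ref{thm:Harnack_ineq}) therefore gives $k(z/\rho_s,\xi)\sim k(x_{2^{-s}},\xi)=k_{2^{-s}}(x,\xi)$, with constants independent of $\xi$. Averaging over $W$ and summing over the $\sim_\alpha 1$ cubes in $\mathcal W_s(x)$ yields $|c_s|\lesssim 2^s k_{2^{-s}}$.

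\textbf{Condition 2.} By Lemma \ref{lem:cy_properties} with $\varphi=1$, together with $K1=1$ (Lemma \ref{lem:MartinKernel}), we get $\nabla K1\equiv 0$ and hence $C_s1=0$. The only point to check is that Fubini and differentiation under the integral are legitimate. For $z$ in compact subsets of $D$, the bound $|\nabla^1 k(z,\cdot)|\lesssim \delta(z)^{-1}k(z,\cdot)$ together with $\int k(z,\xi)\,\mathrm d\omega(\xi)=1$ gives the needed domination.

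\textbf{Condition 3.} This is the main step. By Lemma \ref{lem:cy_properties} with $\varphi=v_{2^{-k}}$,
$$(C_jv_{2^{-k}})(x)=\sum_{W\in\mathcal W_j(x)}\dashint_W\langle(\nabla K v_{2^{-k}})(z/\rho_j),\sigma(z)\rangle\,\mathrm dz .$$
Here $h:=Kv_{2^{-k}}$ is the nonnegative harmonic function $h(w)=v((1-2^{-k})w)$, obtained by uniqueness of the Dirichlet problem (as in Lemma \ref{lem:MartinKernel}, item 1). Hence
$$\nabla h(w)=(1-2^{-k})(\nabla v)((1-2^{-k})w).$$
Take $w=z/\rho_j$ with $z\in W_j(x)$. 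The point $(1-2^{-k})w$ lies within $\lesssim 2^{-j}+2^{-k}\lesssim 2^{-k}$ of $x_{2^{-k}}$, using $j>k$. Its distance to $\partial D$ is $\gtrsim 2^{-k}$, because $D$ is star-shaped and contains the radial cones $\Gamma_{\beta_0}$: scaling by $1-2^{-k}$ moves every point of $\overline D$ a distance $\gtrsim 2^{-k}$ inside. Lemma \ref{lem:GradientBound} then gives $|\nabla h(w)|\lesssim 2^k v((1-2^{-k})w)$, and Harnack over a distance $\lesssim 2^{-k}$ at scale $2^{-k}$ gives $v((1-2^{-k})w)\sim v(x_{2^{-k}})=v_{2^{-k}}(x)$. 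Summing over the boundedly many cubes gives the bound $C2^kv_{2^{-k}}(x)$. This is condition 3 of Lemma \ref{lem:main_lemma} with $\theta=1$.

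The delicate point is the uniform interior-distance estimate $\delta((1-2^{-k})w)\gtrsim 2^{-k}$ for every $w\in\overline D$. I would derive it from the uniform radial cone $\Gamma_{\beta_0}(\xi)\subset D$: one has $d(\xi_y,\partial D)\sim y$, and for interior $w$ the distance only improves, because the map $w\mapsto(1-y)w$ sends $D$ into $D_y$ and $d(D_y,\partial D)\gtrsim y$ by star-shapedness with $|\Phi|\ge c_\Phi$.
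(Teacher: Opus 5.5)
Your proposal is correct and follows essentially the same route as the paper. Condition 1 uses the gradient bound plus Harnack on the scaled cubes $W/\rho_s$; condition 2 uses Lemma \ref{lem:cy_properties} with $\varphi=1$; condition 3 uses the identity $(\nabla K v_{2^{-k}})(z/\rho_j)=\rho_k(\nabla v)(\rho_k z/\rho_j)$, the estimate $\delta(\rho_k z/\rho_j)\gtrsim 2^{-k}$, the gradient bound and Harnack. Your extra justifications (the domination for differentiating under the integral, and the interior-distance estimate via the radial cones $\Gamma_{\beta_0}$) only make explicit what the paper leaves implicit.
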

	\begin{proof}
	 	\begin{enumerate}
	 		\item It is clear that $|c_s(x,\xi)|\lesssim\sup_{z\in W_s(x)}|(\nabla^1 k)(z/\rho_s,\xi)|$. Apply the pointwise gradient bound given in Lemma \ref{lem:GradientBound} and note that since $\delta(W_s(x)/\rho_s)\sim2^{-s}$, Harnack inequality finishes the proof of 1.
	 		\item This is a special case of Lemma \ref{lem:cy_properties} by setting $\phi=1$.
	 		\item By Lemma \ref{lem:cy_properties}, we obtain
	 		\begin{align*}
	 			\left|(C_jv_{2^{-k}})(x)\right|\leq\sum_{ W\in \mathcal W_j(x)}\dashint_{W}|(\nabla v_{2^{-k}})(z/\rho_j)|\mathrm dz
	 		\end{align*}
	 		Since 
	 		\begin{align*}
	 			(\nabla v_{2^{-k}})(z/\rho_j)=\rho_k(\nabla v)\left(\frac{\rho_k}{\rho_j} z\right)
	 		\end{align*}
	 		and $\delta(\rho_k/\rho_j z)\sim2^{-k}$ for $z\in W_j(x)$, we indeed have $|(\nabla v_{2^{-k}})(z/\rho_j)|\lesssim \rho_k2^k v_{2^{-k}}(z/\rho_j)$. Harnack's inequality then readily implies $v_{2^{-k}}(z/\rho_j)\sim v_{2^{-k}}(x)$ for  $z\in W\subset W_j(x)$.
	 	\end{enumerate}
	\end{proof}
	
	Before proving Theorem \ref{thm:MainThm2} we require one more technical Lemma.
	\begin{lemma}\label{lem:tec}
		Let $f\in L^1_{\mathrm{loc}}$ be nonnegative, then
		\begin{align*}
			\int_{\partial D} k(x_{2^{-s}},\xi)\int_{W_s(\xi)}f(z)\mathrm dz\mathrm d\omega(\xi)\geq C \int_{W_s(x)}f(z)\mathrm dz \quad \text{for all }s\in\mathbb N
		\end{align*}
	\end{lemma}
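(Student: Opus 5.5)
The idea is to discard everything except a small boundary set of $\xi$ near $x$, and to use the Carleson estimate (Lemma \ref{lem:Carleson}) together with Dahlberg-type comparison to bound the Martin kernel $k(x_{2^{-s}},\cdot)$ from below on that set. Since $f\ge 0$, for any Borel set $E\subset\partial D$ we have
\begin{align*}
\int_{\partial D} k(x_{2^{-s}},\xi)\int_{W_s(\xi)}f\,\mathrm dz\,\mathrm d\omega(\xi)\;\geq\;\int_{E} k(x_{2^{-s}},\xi)\int_{W_s(\xi)\cap W_s(x)}f\,\mathrm dz\,\mathrm d\omega(\xi).
\end{align*}
We take $E=\Delta:=B(x,\eta 2^{-s})\cap\partial D$ for a small $\eta=\eta(D,\alpha)>0$. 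The goal is to show two things: first, that $W_s(\xi)$ covers $W_s(x)$ for all $\xi\in\Delta$, possibly after adjusting the aperture $\alpha$; second, that $\int_\Delta k(x_{2^{-s}},\xi)\,\mathrm d\omega(\xi)\gtrsim 1$. Fubini then gives the claim with $C$ depending only on $D$ and $\alpha$.

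\textbf{Step 1 (covering).} Take a cube $W\in\mathcal W_s(x)$. Its side length is $2^{-s}$, its distance to the boundary is $\delta(W)\sim 2^{-s}$, and $\delta(W)\ge \alpha\, d(W,x)$. For $\xi\in\Delta$ we have $d(W,\xi)\le d(W,x)+\eta 2^{-s}$, so $\delta(W)\ge \alpha' d(W,\xi)$ with $\alpha'$ slightly smaller than $\alpha$. Strictly, this proves $W\in\mathcal W_s(\xi)$ only for the wider aperture $\alpha'$. There are two ways to handle this. One is to prove the lemma with the cones on the left defined through $\alpha'$; this is harmless in the later application, since Theorem \ref{thm:MainThm3} only asks for some cone. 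The other is to note that there are finitely many Whitney cubes in $W_s(x)$, each of size $\sim 2^{-s}$, so a cube sitting exactly on the aperture boundary can be absorbed by choosing $\eta$ small relative to the gaps in the finitely many admissible cube configurations. I would take the first route and state it explicitly.

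\textbf{Step 2 (kernel mass near $x$).} By the definition of the Martin kernel,
\begin{align*}
\int_\Delta k(x_{2^{-s}},\xi)\,\mathrm d\omega(\xi)=\omega^{x_{2^{-s}}}(\Delta),
\end{align*}
where $\omega=\omega^{0}$ and the pole is $z_0=0$. As noted in the preliminaries, $x_{2^{-s}}=A_{2^{-s}}(x)$. The Carleson estimate (Lemma \ref{lem:Carleson}) gives $\omega^{y}(B(x,r))\ge c$ for $y$ near $A_r(x)$. We need it at radius $r=\eta 2^{-s}$, but our point sits at scale $2^{-s}$, not $\eta 2^{-s}$. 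I would connect $x_{2^{-s}}$ to $A_{\eta2^{-s}}(x)=x_{\eta 2^{-s}}$ by a chain of boundedly many Harnack balls along the radial segment, which lies in the cone $\Gamma_{\beta_0}(x)$. The Harnack inequality (Theorem \ref{thm:Harnack_ineq}) applied to the positive harmonic function $y\mapsto\omega^y(\Delta)$ then gives
\begin{align*}
\omega^{x_{2^{-s}}}(\Delta)\gtrsim_\eta\omega^{x_{\eta2^{-s}}}(\Delta)\ge c.
\end{align*}

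\textbf{Assembling.} Fubini gives
\begin{align*}
\int_\Delta k(x_{2^{-s}},\xi)\int_{W_s(x)}f\,\mathrm dz\,\mathrm d\omega(\xi)\ge c\int_{W_s(x)}f\,\mathrm dz.
\end{align*}
For small $s$, when $2^{-s}$ is comparable to $\mathrm{diam}\,D$, the bound follows directly from Harnack because $k(x_{1/2},\cdot)\sim 1$. The main obstacle is Step 1, the aperture bookkeeping for the discrete cones, since Step 2 is standard.
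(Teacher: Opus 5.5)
\textbf{There is a genuine gap in Step 1, and it is more than bookkeeping.} With a single boundary set $\Delta=B(x,\eta 2^{-s})\cap\partial D$ serving every cube at once, you only get $W_s(x)\subseteq W_s^{\alpha'}(\xi)$ for the strictly wider aperture $\alpha'=\alpha/(1+\eta d^{-1/2})$. That is a two-aperture inequality, not the lemma as stated, which has the same cones on both sides.

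Your second route cannot repair this. Since $x$ moves continuously along $\partial D$, there are no ``finitely many configurations.'' A cube $W\in\mathcal W_s(x)$ can have margin $\delta(W)-\alpha\,\delta(W,x)$ arbitrarily small compared with $2^{-s}$, even zero. For such a cube, $\xi$ is admissible only if $\delta(W,\xi)\le\delta(W)/\alpha\approx\delta(W,x)$, so $\xi$ essentially may not move away from $W$. Near-edge cubes on opposite sides of $x$ then impose conflicting constraints. The set of $\xi$ with $W_s(x)\subseteq W_s(\xi)$ can therefore be arbitrarily small relative to $2^{-s}$, and no $\eta>0$ works uniformly in $x$ and $s$.

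Your first route is a legitimate workaround for Theorem \ref{thm:MainThm3}: build $c_s$ with $\alpha'$ and conclude finiteness of $V_c$ for aperture $\alpha$. But it changes both the statement and the kernels, so it does not prove the lemma.

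\textbf{The missing idea is to let the boundary set depend on the cube.} Apply Fubini in the other direction. The left side equals $\int_D f(z)\,\omega^{x_{2^{-s}}}(V_s(z))\,\mathrm dz$ with $V_s(z)=\{\xi\in\partial D: z\in W_s(\xi)\}$. Restrict to $z\in W_s(x)$ and bound $\omega^{x_{2^{-s}}}(V_s(z))$ from below for each $z$ separately.

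For $z\in W\in\mathcal W_s(x)$, pick $y\in\partial D$ with $\delta(W,y)\le\delta(W)+2^{-s-1}$. For $\xi\in B(y,2^{-s-1})\cap\partial D$ you get $\delta(W,\xi)\le\delta(W)+2^{-s}$. The Whitney condition gives $\delta(W,\xi)\ge\delta(W)\ge\mathrm{diam}\,W=\sqrt d\,2^{-s}$. Hence $\delta(W)\ge(1-d^{-1/2})\,\delta(W,\xi)\ge\alpha\,\delta(W,\xi)$ as soon as $\alpha\le 1-d^{-1/2}$. This is where the smallness of $\alpha$ enters, which your argument never uses. So this surface ball lies in $V_s(z)$ for the same aperture.

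Moreover $|x-y|\le\mathrm{diam}\,W+\delta(W,x)+\delta(W,y)\lesssim 2^{-s}$. Your Step 2 (Carleson plus a bounded Harnack chain, which the paper leaves implicit) then gives $\omega^{x_{2^{-s}}}(V_s(z))\gtrsim 1$ uniformly in $z$, $x$ and $s$. This is exactly the paper's argument.
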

	\begin{proof}
		By Fubini 
		\begin{align*}
			\int_{\partial D} k(x_{2^{-s}},\xi)\int_{W_s(\xi)}f(z)\mathrm dz\mathrm d\omega(\xi)=\int_Df(z)\omega^{x_{2^{-s}}}(V_s(z))\mathrm dz
		\end{align*}
		where $V_s(z):=\lbrace\xi\in {\partial D}: z\in W_s(\xi)\rbrace$. Clearly
		\begin{align*}
			\int_Df(z)\omega^{x_{2^{-s}}}(V_s(z))\mathrm dz\geq\int_{W_s(x)}f(z)\omega^{x_{2^{-s}}}(V_s(z))\mathrm{d}z.
		\end{align*}
 		Now observe that if $z\in W\in\mathcal{W}_s(x)$, then $V_s(z)$ contains $B\left(y,2^{-s-1}\right)\cap\partial D$ where $y\in\partial D$ is such that $\delta(W,y)\leq\delta(W)+2^{-s-1}$. Indeed, let $\xi\in B\left(y,2^{-s-1}\right)\cap\partial D$, then $\delta(W,y)\geq\delta(W,\xi)-2^{-s-1}$ and thus $\delta(W)\geq\delta(W,\xi)-2^{-s}$. For $\xi$ to be in $V_s(z)$ we want to check that $\delta(W)\geq\alpha\delta(W,\xi)$. Due to \eqref{eq:Whitney} this is the case since $\delta(W,\xi)-2^{-s}\geq\alpha\delta(W,\xi)$ by the inequality $2^{-s}\leq d^{-1/2}\delta(W,\xi)$. Indeed, since $\alpha>0$ is small and 
 		\begin{align*}
 			\delta(W,\xi)-2^{-s}\geq (1-d^{-1/2})\delta(W,\xi),
 		\end{align*}
 		we have shown that $\xi\in V_s(z)$. Together with the inequality
 		\begin{align*}
 			|x-y|\leq\text{diam}W+\delta(W,x)+\delta(W,y)\lesssim 2^{-s},
 		\end{align*}
 		Lemma \ref{lem:Carleson} yields
		\begin{align*}
			\omega^{x_{2^{-s}}}(V_s(z))\geq \delta_{\partial D},
		\end{align*}
		thus completing the proof.
	\end{proof}
	We will now prove Theorem \ref{thm:MainThm3}.
	\begin{proof}[Proof of Theorem \ref{thm:MainThm3}]
		Consider the sequence $(g_s)_s$ defined by $g_s:= u_{2^{-s}}$. In light of Lemma \ref{lem:cy_properties} (1), the second condition in \eqref{cond:gs} is immediate. Since $u$ is harmonic, the condition $\tilde K_{s+1}u_{2^{-s-1}}=u_{2^{-s}}$ is satisfied by the definition of $\tilde K_s$. Choosing $\kappa=\omega$, for any $\varepsilon<c_{\partial D}$, the main Lemma \ref{lem:main_lemma} thus yields the existence of a measure $\nu_{\varepsilon}$ satisfying
		\begin{align}\label{eq:NuIntFinite}
			\int_{\partial D}\sum_{s=1}^{\infty}2^{-s} B_su_{2^{-s}}\mathrm d\nu_{\varepsilon}\lesssim \varepsilon^{-1}u(0),
		\end{align}
		that is, the integrand is finite $\nu_{\varepsilon}$ almost everywhere. Since $\nu_\varepsilon\neq0$, there exists $x\in\partial D$ such that
		\begin{align}\label{eq:explicitBRep}
			\sum_{s=1}^{\infty}2^{-s} (B_su_{2^{-s}})(x)=\sum_{s=1}^{\infty}2^{-s}\int_{\partial D} k(x_{2^{-s}},\xi)\dashint_{W_s(\xi)}|\nabla u(z)|\mathrm dz\mathrm d\omega(\xi)
		\end{align}
		is finite. Finally, invoking Lemma \ref{lem:tec} with $f=|\nabla u|$ for every $s\in\mathbb N$ yields the inequalities
		\begin{align*}
			\int_{\partial D} k(x_{2^{-s}},\xi)\dashint_{W_s(\xi)}|\nabla u(z)|\mathrm dz\mathrm d\omega(\xi)\geq C\dashint_{W_s(x)}|\nabla u(z)|\mathrm dz,
		\end{align*}
		thus finishing the proof.
	\end{proof}
	\subsection{Localization Estimates}
	This section is dedicated to providing estimates that allow to narrow down the possible locations of Bourgain points to arbitrary small scales, that is we localize the set $\mathcal B_c'$. We accomplished this by extracting more information on the support of the measures $\nu_\varepsilon$ that are constructed with the kernels $c_s$ given in \eqref{def:csKernel} as input. 
	\begin{lemma}\label{lem:localization}
	 Let $D$ be a star-shaped Lipschitz domain and $B=B(x,r)$ with $r>0$, $x\in\partial D$. If we choose $\kappa=\omega$ in the construction of $\nu_\varepsilon$, then there exists $\varepsilon(B)>0$ such that
		\begin{align*}
			\nu_{\varepsilon}(B\cap {\partial D})>0 \quad\forall\varepsilon<\varepsilon(B)
		\end{align*}
	\end{lemma}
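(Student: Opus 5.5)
We will show that $\nu_\varepsilon$ charges $B\cap\partial D$ by testing it against a continuous bump and using that $\Pi_s$ is a small perturbation of the harmonic extension kernels. Fix a continuous $0\le f\le \mathbbm 1_{B}$ with $f=1$ on $B(x,r/2)$. Since $\nu_\varepsilon$ is the weak* limit of $F_s$ (along a subsequence),
\begin{align*}
\nu_\varepsilon(B\cap\partial D)\geq\int_{\partial D} f\,\mathrm d\nu_\varepsilon=\lim_{s\to\infty}\int_{\partial D}\Pi_s f\,\mathrm d\omega .
\end{align*}
It therefore suffices to bound $\int\Pi_s f\,\mathrm d\omega$ from below uniformly in $s$ once $\varepsilon$ is small depending on $B$.

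\textbf{Step 1: comparison with the unperturbed kernel.} Expand $\pi_s=\pi_{(1,0)}\circ\cdots\circ\pi_{(s,s-1)}$ with $\pi_{(i,i-1)}=\tilde k_i-\varepsilon 2^{-i}b_i$. Telescoping gives
\begin{align*}
\Pi_s f-\tilde K_1\cdots\tilde K_s f=-\varepsilon\sum_{i=1}^s 2^{-i}\,\Pi_{i-1}B_i\,\tilde K_{i+1}\cdots\tilde K_s f .
\end{align*}
By the semigroup property (Lemma \ref{lem:MartinKernel}), $\tilde K_{i+1}\cdots\tilde K_s f=K_{t}f$ with $t\star 2^{-s}=2^{-i}$. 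This is the trace of the harmonic extension $v=Kf$ at levels close to $2^{-i}$, so up to a Harnack-comparable shift it is $v_{2^{-i}}$, with $0\le v\le1$. The main term $\tilde K_1\cdots\tilde K_s f=K_{\tau_s}f$, with $\tau_s=1-\tfrac{1/2}{1-2^{-s}}\to 1/2$, converges to $v_{1/2}$. Its $\omega$-integral is $\int f\,\mathrm d\omega\gtrsim\omega(B(x,r/2))=:c_B>0$, because integrating $k_y(\cdot,\xi)$ against $\mathrm d\omega$ reproduces the harmonic function at the origin ($z_0=0$).

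\textbf{Step 2: the error terms.} For each $i$ we need
\begin{align*}
2^{-i}\,|B_i v_{2^{-i}}|\lesssim 2^{-i}\,K_i|C_i v_{2^{-i}}|\lesssim \|v\|_\infty\lesssim1 .
\end{align*}
This follows from the gradient bound as in property 3 of the $c_s$ lemma: $|C_i v_{2^{-i}}|\lesssim 2^{i}\sup|v|$. Summing naively over $i\le s$ gives a bound $\varepsilon s$, which is not uniform. This is the main obstacle.

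\textbf{Handling the obstacle.} The decay must come from the mean-zero structure. $C_i$ applied to $v_{2^{-i}}$ is, by Lemma \ref{lem:cy_properties}, an average of $\nabla v$ at distance $\sim2^{-i}$ from the boundary. I would split $v=Kf$ into its behaviour near and away from $\partial B$. The Hölder continuity of $Kf$ up to the boundary (from Lemma \ref{lem:TypeSEst}, or the decay of the Martin kernel in Theorem \ref{lem:DecayMartinKernel}) gives $|\nabla v|\lesssim 2^{i(1-\gamma)}r^{-\gamma}$ at depth $2^{-i}$ away from a $2^{-i}$-neighbourhood of $\partial B$. Near $\partial B$ we only use the crude bound, but that region has $\omega$-measure $\lesssim(2^{-i}/r)^{\gamma'}$ by Lemma \ref{lem:Dahlberg}. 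Integrating against $\mathrm d\omega$, using that $\Pi_{i-1}$ preserves $\omega$-integrals up to constants (its mean is 1, and a symmetric Harnack argument applies, or one can apply $\Pi_{i-1}$ to the positive majorant), the $i$-th error is $\lesssim (2^{-i}/r)^{\gamma''}$. These errors are summable with total $C_B$ independent of $s$.

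Putting the steps together gives
\begin{align*}
\int_{\partial D}\Pi_s f\,\mathrm d\omega\ge c_B-\varepsilon C_B-o(1).
\end{align*}
Choosing $\varepsilon(B)=c_B/(2C_B)$, which is also below the positivity threshold, yields $\nu_\varepsilon(B\cap\partial D)\ge c_B/2>0$.
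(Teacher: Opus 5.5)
Your route differs from the paper's and can be made to work, but the step that makes the error series summable is not correct as written.

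\textbf{The flawed step.} The claim that $\Pi_{i-1}$ preserves $\omega$-integrals up to constants is false. Mean one controls $\int\pi_{i-1}(x,\xi)\,\mathrm d\omega(\xi)$, not $\int\pi_{i-1}(x,\xi)\,\mathrm d\omega(x)$. The latter is in general only bounded by $(1+a\varepsilon)^{i-1}$, from $\pi_{(j,j-1)}\le(1+a\varepsilon)\tilde k_j$ and $\int\tilde k_j(x,\xi)\,\mathrm d\omega(x)=1$. Applying $\Pi_{i-1}$ to a positive majorant only reproduces this exponential factor.

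A bound uniform in $i$ cannot hold. It would give $\nu_\varepsilon\le C\omega$, and since $\nu_\varepsilon$ is carried by $\mathcal B_c'(u,D)$, this would force $\omega(\mathcal B_c'(u,D))>0$ for every $u$. That fails on the disc for $u=\mathrm{Re}\,F+\|F\|_\infty$, with $F$ one of Rudin's bounded analytic functions of infinite radial variation almost everywhere (finite conical variation implies finite radial variation).

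There is a second problem. For a merely continuous $f$, the extension $Kf$ need not be H\"older. Your split around $\partial B$ also ignores the annulus $r/2<|\xi-x|<r$, where $f$ varies.

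\textbf{The repair.} Both problems disappear if you take $f$ Lipschitz, e.g.\ $f(\xi)=\min\{1,\max\{0,2-2|\xi-x|/r\}\}$, and use only sup norms.
\begin{itemize}
\item Theorem~\ref{lem:DecayMartinKernel} plus doubling gives $|Kf(z)-f(\xi')|\lesssim(\delta(z)/r)^{\lambda}$, with $\lambda=\alpha/(1+\alpha)$ and $\xi'$ a nearest boundary point. Interior estimates then give $|\nabla Kf(z)|\lesssim\delta(z)^{\lambda-1}r^{-\lambda}$ on all of $D$, so no split is needed.
\item Let $t_i\star2^{-s}=2^{-i}$. By Lemma~\ref{lem:cy_properties}, $C_i(Kf)_{t_i}$ only involves $\nabla Kf$ at the points $z/(1-2^{-s})$, $z\in W_i(x)$, whose depth is $\gtrsim2^{-i}$. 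Hence $\|B_i(Kf)_{t_i}\|_\infty\le\|C_i(Kf)_{t_i}\|_\infty\lesssim2^{i(1-\lambda)}r^{-\lambda}$.
\item Since $\pi_{i-1}\ge0$ has row integrals one, $|\int\Pi_{i-1}g\,\mathrm d\omega|\le\|g\|_\infty$.
\end{itemize}
Together these give $\int\Pi_sf\,\mathrm d\omega\ge\omega(\Delta_{r/2}(x))-C\varepsilon r^{-\lambda}$ uniformly in $s$. This proves the lemma with $\varepsilon(B)\sim r^{\lambda}\omega(\Delta_{r/2}(x))$.

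\textbf{Comparison with the paper.} The paper tests $\nu_\varepsilon$ against $\xi\mapsto\omega^{\xi_{2^{-l}}}(\Delta_{r/2}(x))$, the trace at level $2^{-l}$ of a harmonic function. The $\Phi$-property (Lemma~\ref{lem:phi-prop}) then replaces $\Pi_s$ by $\Pi_l$ uniformly in $s$. The crude bound $\pi_l\gtrsim(1-a\varepsilon)^l k_{1-\frac{1/2}{1-2^{-l}}}$ gives mass $\gtrsim(1-a\varepsilon)^l\omega(\Delta_{r/2}(x))$. Martin kernel decay bounds the part coming from $\partial D\setminus\Delta_r(x)$ by $C(r)2^{-\alpha l}\omega(\Delta_{r/2}(x))$. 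One takes $1-a\varepsilon>2^{-\alpha}$ and then $l$ large.

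That argument needs only the abstract hypotheses of Lemma~\ref{lem:main_lemma}. Its threshold on $\varepsilon$ does not depend on $B$, so $\nu_\varepsilon$ has full support for each fixed small $\varepsilon$.

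Your expansion around the unperturbed product $\tilde K_1\cdots\tilde K_s$, whose $\omega$-integral is exactly $\int f\,\mathrm d\omega$, instead uses the explicit gradient structure of $c_s$ and boundary regularity. It gives a $B$-dependent threshold. In return it yields the quantitative bound $|\int f\,\mathrm d\nu_\varepsilon-\int f\,\mathrm d\omega|\lesssim\varepsilon r^{-\lambda}$, i.e.\ $\nu_\varepsilon\to\omega$ weak$^*$ as $\varepsilon\to0$.
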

	\begin{proof}
		We write $\Delta_r(x)$ for $B(x,r)\cap\partial D$. Fix some $l\in\mathbb N$ and observe that 
		\begin{align}\label{eq:SplittingBigBall}
			\nu_\varepsilon(\Delta_r(x))&\geq\int_{\Delta_r(x)}\omega^{\xi_{2^{-l}}}(\Delta_{r/2}(x))\mathrm d\nu_{\varepsilon}(\xi) \\
			&=\int_{\partial D}\omega^{\xi_{2^{-l}}}(\Delta_{r/2}(x))\mathrm d\nu_{\varepsilon}(\xi) -\int_{\partial D\setminus \Delta_{r}(x)}\omega^{\xi_{2^{-l}}}(\Delta_{r/2}(x))\mathrm d\nu_{\varepsilon}(\xi)=: \Romannum{1}-\Romannum{2}.
		\end{align}
		We estimate $\Romannum{1}$ from below and $\Romannum{2}$ from above. The constants $a_i>0$ that will appear in the inequalities below only depend on $D$.
		Let $\phi$ denote the nonnegative harmonic function $z\mapsto\omega^z(\Delta_{r/2}(x))$. The Phi-property implies that $\Pi_s\phi_{2^{-l}}\geq a_1\Pi_l\phi_{2^{-l}}$ for all $s>l$. Moreover 
		\begin{align*}
			\pi_l\gtrsim (1-a_2\varepsilon)^lk_{1-\frac{1/2}{1-2^{-l}}}
		\end{align*} 
		and thus
		\begin{align}\label{eq:IEstimate}
			\Romannum{1}=\lim_{s\rightarrow\infty}\int_{\partial D}\Pi_s\phi_{2^{-l}}\mathrm
			d\omega\geq a_2\int_{\partial D}\Pi_l\phi_{2^{-l}}\mathrm d\omega\geq a_3(1-a_2\varepsilon)^l\omega(\Delta_{r/2}(x)).
		\end{align}
		For $\Romannum{2}$ observe that
		\begin{align*}
			\int_{\partial D\setminus \Delta_{r}(x)}\omega^{\xi_{2^{-l}}}(\Delta_{r/2}(x))\mathrm d\nu_{\varepsilon}(\xi)\leq\sup_{\xi\in\partial D\setminus \Delta_{r}(x)}\omega^{\xi_{2^{-l}}}(\Delta_{r/2}(x)),
		\end{align*}
		since $\nu_{\varepsilon}(\partial D\setminus \Delta_r(x))\leq 1$.  We write
		\begin{align}\label{eq:HarmonicMeasIntegral}
			\omega^{\xi_{2^{-l}}}(\Delta_{r/2}(x))=\int_{\Delta_{r/2}(x)}k(\xi_{2^{-l}},\zeta)\mathrm d\omega(\zeta)
		\end{align}
		where $\xi\in \partial D\setminus \Delta_r(x)$ and let $k\in\mathbb N$ be minimal such that $2^{k-l}\geq r$. Since $|\xi-\zeta|\geq r/2\geq 2^{k-2}2^{-l}$, the decay estimates for the Martin kernel in Lemma \ref{lem:DecayMartinKernel} yield the inequality
		\begin{align*}
			k(\xi_{2^{-l}},\zeta)\leq a_4\frac{2^{-\alpha k}}{\omega(\Delta_{2^{k-l}}(\xi))}\leq a_4\frac{r^{-\alpha}2^{-\alpha l}}{\omega(\Delta_r(\xi))},
		\end{align*}
	 	by the choice of $k$. We plug this into \eqref{eq:HarmonicMeasIntegral} and obtain
		\begin{align}\label{eq:IIEstimate}
			\omega^{\xi_{2^{-l}}}(\Delta_{r/2}(x))\leq a_4\frac{\omega(\Delta_{r/2}(x))}{\omega(\Delta_r(\xi))}r^{-\alpha}2^{-\alpha l}.
		\end{align}
		By Dahlberg's inequalities in Lemma \ref{lem:Dahlberg}, $\omega(\Delta_r(\xi))\sim r^{d-2}G(\xi_r,0)$, thus we see that $C(r):=\inf_{\xi\in\partial D}\omega(\Delta_r(\xi))>0$. Applying estimates \eqref{eq:IEstimate} and \eqref{eq:IIEstimate} in \eqref{eq:SplittingBigBall} yields
		\begin{align*}
			\nu_{\varepsilon}(A)\geq \omega(\Delta_{r/2}(x))\left(a_3(1-a_2\varepsilon)^l-a_4r^{-\alpha}C(r)^{-1}2^{-\alpha l}\right)
		\end{align*}
		which is positive if we choose $\varepsilon$ small enough such that $1-a_1\varepsilon>2^{-\alpha}$ and then $l$ large enough. The choice of $l$ only depends on the constants $a_3$, $a_4$, $\varepsilon$ and $r$.
	\end{proof}
	\noindent\textbf{Remark:}
	\begin{itemize}
		\item The content of Lemma \ref{lem:localization} appeared already in \cite[Lemma 3.33(2)]{FMR25}. 
		\item The proof of Lemma \ref{lem:localization} replaces the erroneous estimate in \cite[p.55,line 5]{FMR25}.
	\end{itemize}
	\begin{corollary}
		Let $D$ be a star-shaped Lipschitz domain. Then $\mathcal{B}_c'(u,D)$ is dense in $\partial D$.
	\end{corollary}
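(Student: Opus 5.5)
Fix a ball $B=B(x,r)$ with $x\in\partial D$ and $r>0$; the goal is to produce a Bourgain point $\xi\in\partial D\cap B$ with $V_c(\xi,u)<\infty$. The plan is to combine the integrability statement coming from the Main Lemma \ref{lem:main_lemma} with the localization of $\nu_\varepsilon$ given by Lemma \ref{lem:localization}.

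First, choose $\kappa=\omega$ and fix $\varepsilon<\min\{\varepsilon(\partial D),\varepsilon(B)\}$, where $\varepsilon(B)$ is the threshold from Lemma \ref{lem:localization}. The kernels $c_s$ from \eqref{def:csKernel} satisfy the three hypotheses of the Main Lemma with $\theta=1$, as verified above. The sequence $g_s=u_{2^{-s}}$ satisfies \eqref{cond:gs}. Exactly as in the proof of Theorem \ref{thm:MainThm3}, we obtain \eqref{eq:NuIntFinite}:
\begin{align*}
\int_{\partial D}\sum_{s=1}^\infty 2^{-s}B_su_{2^{-s}}\,\mathrm d\nu_\varepsilon\lesssim \varepsilon^{-1}u(0)<\infty .
\end{align*}
Hence the set $E:=\{\xi\in\partial D:\sum_s 2^{-s}(B_su_{2^{-s}})(\xi)<\infty\}$ has full $\nu_\varepsilon$-measure. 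Measurability is not an issue, since each $B_su_{2^{-s}}$ is continuous and nonnegative, so the series is a Borel function.

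Second, Lemma \ref{lem:localization} gives $\nu_\varepsilon(B\cap\partial D)>0$, and therefore $\nu_\varepsilon(E\cap B)>0$. In particular $E\cap B\neq\emptyset$. For any $\xi\in E\cap B$, Lemma \ref{lem:tec} applied with $f=|\nabla u|$ and $x=\xi$, for every $s$, bounds $V_c(\xi,u)$ termwise by a constant times the series in \eqref{eq:explicitBRep}. The term $s=0$ of $V_c$ is finite trivially, since it involves finitely many Whitney cubes compactly inside $D$. Hence $\xi\in\mathcal B_c'(u,D)\cap B$. Since $B$ was arbitrary, density follows.

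The only delicate point is that one fixed $\varepsilon$ must serve both lemmas simultaneously. This is harmless because the Main Lemma holds for every $\varepsilon$ below $\varepsilon(\partial D)$, and the measure $\nu_\varepsilon$ depends only on $\varepsilon$ and $\kappa$, not on $u$ beyond the fixed kernels $c_s$. So we simply take the smaller threshold. Nothing else requires new estimates.
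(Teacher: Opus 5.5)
Your proposal is correct and follows the paper's own (implicit) argument for this corollary. Lemma \ref{lem:localization} gives $\nu_\varepsilon(\Delta_r(x))>0$ for small $\varepsilon$, while \eqref{eq:NuIntFinite} together with Lemma \ref{lem:tec} shows, as in the proof of Theorem \ref{thm:MainThm3}, that $\nu_\varepsilon$-almost every point lies in $\mathcal B_c'(u,D)$; fixing a single $\varepsilon$ below both thresholds then gives $\mathcal B_c'(u,D)\cap B\neq\emptyset$, exactly as the paper does at the start of its subsequent dimension lemma.
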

	\subsection{Dimension estimates}
	Here, we upgrade density of $\mathcal B_c'(u,D)$ to ultradensity. As a first step, we investigate how the measure $\nu_\varepsilon$ of a ball scales with its radius.
	\begin{lemma}
		Let $D$ be a star-shaped Lipschitz domain. There exists $C=C(D)$ and $\gamma=\gamma(D)\in(0,1]$ such that for any $x\in\partial D$ and $r>0$, the estimate
		\begin{align*}
			\nu_\varepsilon(\Delta_r(x))\leq C r^{d-2+\gamma-C\varepsilon},
		\end{align*}
		holds. Moreover, if $D$ is $C^{1,\mathrm{Dini}}$, we can take $\gamma=1$.
	\end{lemma}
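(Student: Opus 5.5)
The bound on $\nu_\varepsilon(\Delta_r(x))$ comes from testing the measure against a harmonic majorant of the indicator of the ball and tracking how much the perturbed kernels $\Pi_s$ can amplify it. Fix $x\in\partial D$ and $r$ small; choose $l\in\mathbb N$ with $2^{-l}\sim r$. By Lemma \ref{lem:Carleson}, the nonnegative harmonic function $\phi(z):=\omega^z(\Delta_{2r}(x))$ satisfies $\phi_{2^{-l}}(\xi)=\phi(\xi_{2^{-l}})\geq c$ for $\xi\in\Delta_r(x)$, since $\xi_{2^{-l}}=A_{2^{-l}}(\xi)$ lies within a Whitney-type distance of $\Delta_{2r}(x)$. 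The plan is therefore to use
\begin{align*}
\nu_\varepsilon(\Delta_r(x))\lesssim\int_{\partial D}\phi_{2^{-l}}\,\mathrm d\nu_\varepsilon=\lim_{s\to\infty}\int_{\partial D}\Pi_s\phi_{2^{-l}}\,\mathrm d\omega .
\end{align*}
The function $\phi_{2^{-l}}$ is continuous, so the weak* limit defining $\nu_\varepsilon$ applies; if necessary, approximate $\phi_{2^{-l}}$ from above by continuous functions.

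Next we split $\Pi_s=\Pi_l\Pi_{(s,l)}$. The $\Phi$-property (Lemma \ref{lem:phi-prop}, with $k=l$ and $r=l$) gives $\Pi_{(s,l)}\phi_{2^{-l}}\lesssim\phi_{2^{-l}}$ uniformly in $s$. Hence it suffices to bound $\int_{\partial D}\Pi_l\phi_{2^{-l}}\,\mathrm d\omega$. Every factor $\pi_{(i,i-1)}$ satisfies $\pi_{(i,i-1)}\leq\tilde k_i+a\varepsilon k_{2^{-i}}\leq(1+a'\varepsilon)\tilde k_i$; this is the upper counterpart of \eqref{eq:NaiveIneq2}. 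Iterating and using the semigroup property in Lemma \ref{lem:MartinKernel}, we get
\begin{align*}
\pi_l\leq(1+a'\varepsilon)^l\,k_{1-\frac{1/2}{1-2^{-l}}}\lesssim(1+a'\varepsilon)^l k_{1/2}.
\end{align*}
Since $k_{1/2}$ is bounded above by a constant depending only on $D$ (Harnack at a point at fixed distance from $\partial D$), this yields
\begin{align*}
\int_{\partial D}\Pi_l\phi_{2^{-l}}\,\mathrm d\omega\lesssim(1+a'\varepsilon)^l\int_{\partial D}\phi_{2^{-l}}\,\mathrm d\omega\lesssim(1+a'\varepsilon)^l\,\phi(0)=(1+a'\varepsilon)^l\,\omega(\Delta_{2r}(x)).
\end{align*}
Here $\int\phi_{2^{-l}}\,\mathrm d\omega=\phi((1-2^{-l})\cdot 0)=\phi(0)$ by the mean value property for the harmonic extension (Lemma \ref{lem:MartinKernel}(1) with pole $0$). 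Since $2^{-l}\sim r$, the prefactor is $(1+a'\varepsilon)^l\leq r^{-C\varepsilon}$.

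It remains to show $\omega(\Delta_{2r}(x))\lesssim r^{d-2+\gamma}$. By Lemma \ref{lem:Dahlberg}, $\omega(\Delta_{2r}(x))\sim r^{d-2}G(A_{2r}(x),0)$. Lemma \ref{lem:TypeSEst} gives $G(A_{2r}(x),0)\lesssim\delta(A_{2r}(x))^\gamma\lesssim r^\gamma$, because $|A_{2r}(x)|\geq c_\Phi/2$ for small $r$. This proves the Lipschitz case with $\gamma=\gamma(D)$ from Grüter–Widman. For $C^{1,\mathrm{Dini}}$ domains we need $G(z,0)\lesssim\delta(z)$, i.e.\ $\gamma=1$. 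This is exactly the sharp Green function estimate deferred to Section \ref{sec:GreenEstimates} (a Lipschitz-type bound near the boundary, or equivalently $\omega\sim$ surface measure via Lemma \ref{lem:harmeasVShausdorff}), and I will invoke it. Large $r$ is trivial since $\nu_\varepsilon(\partial D)=\omega(\partial D)=1$.

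The main obstacle is controlling the growth of $\Pi_l$. The crude factor bound $(1+a'\varepsilon)^l$ is what produces the loss $r^{-C\varepsilon}$. One must check carefully that this bound, together with the $\Phi$-property for the tail $\Pi_{(s,l)}$, is legitimate: the $\Phi$-property constants must not depend on $l$, and the pointwise upper bound on the perturbed kernels must survive composition. The second delicate point is the $\gamma=1$ boundary decay of $G$ on $C^{1,\mathrm{Dini}}$ domains, which relies on the later section.
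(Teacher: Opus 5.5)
Your proof is correct and follows the paper's argument essentially step by step. You use Carleson's estimate to dominate $\mathbbm 1_{\Delta_r}$ by $\phi_{2^{-l}}$, the upper half of the $\Phi$-property to pass from $\Pi_s$ to $\Pi_l$, the crude bound $\pi_l\lesssim(1+a\varepsilon)^l k_{1-\frac{1/2}{1-2^{-l}}}$ to produce the $r^{-C\varepsilon}$ loss, and Dahlberg plus Gr\"uter--Widman (resp.\ $G(z,0)\lesssim\delta(z)$ in the $C^{1,\mathrm{Dini}}$ case) to bound $\omega(\Delta_{2r})$. The only difference is cosmetic: you bound $\pi_l$ pointwise by a constant and use $\int\phi_{2^{-l}}\,\mathrm d\omega=\phi(0)$, whereas the paper first writes $K_{1-\frac{1/2}{1-2^{-l}}}\phi_{2^{-l}}=K_{1/2}\mathbbm 1_{\Delta_{3r}}$ and then bounds $k_{1/2}$, which is the same computation.
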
	
	\begin{proof}
		Choose $l\in\mathbb N$ such that $2^{-l}\leq r\leq2^{-l+1}$ and consider the nonnegative harmonic function $z\in D\mapsto \phi(z):=\omega^z(\Delta_{3r})=(K\mathbbm1_{\Delta_{3r}})(z)$, where $\Delta_\rho:=\Delta_\rho(x)$. By Carleson's inequality \eqref{eq:Carleson}, $\phi(\xi_{2^{-l}})\geq q=q(D)$ for $\xi\in B$. Thus 
		\begin{align*}
			\nu_\varepsilon(\Delta_r)=\int_{\Delta_r}\mathrm{d}\nu_{\varepsilon}\leq q^{-1}\int_{\partial D}\phi(\xi_{2^{-l}})\mathrm d\nu_{\varepsilon}(\xi)
		\end{align*}
		Recall that by the $\Phi$-property, $\Pi_s\phi_{2^{-l}}=\Pi_l\Pi_{(s,l)}\phi_{2^{-l}}\sim\Pi_l\phi_{2^{-l}}$,  for any $s>l$. We use this in
		\begin{align}\label{eq:NuDef}
			\int_{\partial D}\phi_{2^{-l}}\mathrm d\nu_{\varepsilon}=\lim_{s\rightarrow\infty}\int_{\partial D}\Pi_s\phi_{2^{-l}}\mathrm d\kappa\sim\int_{\partial D} \Pi_l\phi_{2^{-l}}\mathrm d\kappa.
		\end{align}
		Similarly to showing that the kernels $\pi_s$ are positive we can also show that
		\begin{align}\label{eq:NaiveIneqUpper}
			\pi_{l}\lesssim(1+a_1\varepsilon)^l k_{1-\frac{1/2}{1-2^{-l}}},
		\end{align}
		where $a_1$ is a constant only depending on $D$. Notice that $(1+a_1\varepsilon)^l=2^{cl\varepsilon}$ for a suitable $c>0$. Moreover observe that for $x\in\partial D$
		\begin{align}\label{eq:BringToTheCenter}
			\left(K_{1-\frac{1/2}{1-2^{-l}}}\phi_l\right)(\xi)=\int_{\partial D}k\left(\frac{1}{2(1-2^{-s})}\xi,\zeta\right)\phi((1-2^{-s})\zeta)\mathrm d\omega(\zeta)=(K_{1/2}\mathbbm1_{\Delta_{3r}})(\xi)
		\end{align}
		holds. Inserting \eqref{eq:NaiveIneqUpper} and \eqref{eq:BringToTheCenter} into \eqref{eq:NuDef} we see that
		\begin{align}\label{eq:MartinKernelFubini}
				\int_{\partial D}\phi_{2^{-l}}(x)\mathrm d\nu_{\varepsilon}(x)\lesssim 2^{lc\varepsilon}\int_{\partial D}K_{1/2}\mathbbm1_{\Delta_{3r}}\mathrm d\kappa = 2^{lc\varepsilon}\int_{\partial D}\mathbbm1_{\Delta_{3r}}(\xi)\int_{\partial D}k(x_{1/2},\xi)\mathrm{d}\kappa(x)\mathrm d\omega(\xi) 
		\end{align}
		by Fubini. Clearly $\sup_{(\partial D)^2}k_{1/2}<\infty$, thus the right side of \eqref{eq:MartinKernelFubini} is bounded from above by $2^{lc\varepsilon}\omega(\Delta_{3r})$, up to a constant depending on $\kappa(\partial D)$ . By Dahlberg's estimates in Lemma \ref{lem:Dahlberg} and the Green function estimates in Lemma \ref{lem:TypeSEst} we have
		\begin{align}\label{eq:HarmMeasEst}
			\omega(3B)\sim r^{d-2}G(x_{3r},0)\lesssim r^{d-2+\gamma}
		\end{align}
		and thus in total 
		\begin{align*}
			\nu_{\varepsilon}(B)\leq C r^{d-2+\gamma-c\varepsilon}.
		\end{align*}
		If $D$ is $C^{1,\mathrm{Dini}}$, Theorem \ref{thm:BogdanLD} yields that $G(x_{3r},0)\sim r^{d-1}$, i.e. we can take $\gamma=1$ in \eqref{eq:HarmMeasEst}.
	\end{proof}
	With these inequalities at hand, the mass distribution principle yields localized dimension estimates of $\mathcal B_c'(u,D)$. 
	\begin{lemma}
		For all $B(x,r)$, $x\in\partial D$ and $r>0$, the set $B\cap\mathcal B_c'(u, D)$ has Hausdorff dimension at least $d-2+\gamma$. If $D$ is $C^{1,\mathrm{Dini}}$ we can take $\gamma=1$, i.e. $\mathcal B_c'(u, D)$ is ultradense in $\partial D$.
	\end{lemma}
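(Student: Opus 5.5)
The plan is the mass distribution principle, applied to a localized piece of the measure $\nu_\varepsilon$. Fix a ball $B=B(x,r)$ with $x\in\partial D$, and let $\eta>0$ be given.

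\textbf{Step 1 (small $\varepsilon$ and positivity).} First choose $\varepsilon$ with
\[
\varepsilon<\min\bigl(\varepsilon(\partial D),\,\varepsilon(B(x,r/2)),\,\eta/C\bigr),
\]
where $\varepsilon(B(x,r/2))$ comes from Lemma~\ref{lem:localization} and $C$ is the constant of the preceding scaling lemma. Build $\nu_\varepsilon$ with $\kappa=\omega$. Lemma~\ref{lem:localization} then gives
\[
\nu_\varepsilon\bigl(\Delta_{r/2}(x)\bigr)>0 .
\]

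\textbf{Step 2 (a measure living on Bourgain points in $B$).} By \eqref{eq:NuIntFinite}, the function
\[
F(\xi):=\sum_{s}2^{-s}(B_su_{2^{-s}})(\xi)
\]
is finite $\nu_\varepsilon$-almost everywhere. By Lemma~\ref{lem:tec}, every $\xi$ with $F(\xi)<\infty$ satisfies $V_c(\xi,u)\lesssim F(\xi)<\infty$. Hence $\nu_\varepsilon$-almost every point lies in $\mathcal B_c'(u,D)$. Set
\[
\mu:=\nu_\varepsilon|_{\Delta_{r/2}(x)}.
\]
This is a nonzero finite Borel measure concentrated on $B\cap\mathcal B_c'(u,D)$. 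Strictly, it is concentrated on the Borel set $\{F<\infty\}\cap\Delta_{r/2}(x)$, which sits inside $B\cap\mathcal B_c'$. Measurability of $F$ follows from continuity of each term $B_su_{2^{-s}}$.

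\textbf{Step 3 (Frostman bound).} For any $y\in\partial D$ and $\rho>0$, the preceding lemma gives
\[
\mu(B(y,\rho))\le\nu_\varepsilon(\Delta_\rho(y))\le C\rho^{\,d-2+\gamma-C\varepsilon}.
\]
For small $\rho$, balls centered off $\partial D$ are handled by enlarging to $\Delta_{2\rho}(y')$ with $y'\in\partial D$. The mass distribution principle then yields
\[
\dim\bigl(B\cap\mathcal B_c'(u,D)\bigr)\ge\dim\bigl(\{F<\infty\}\cap\Delta_{r/2}(x)\bigr)\ge d-2+\gamma-C\varepsilon\ge d-2+\gamma-\eta.
\]
Since $\eta$ is arbitrary, the dimension is at least $d-2+\gamma$. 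In the $C^{1,\mathrm{Dini}}$ case the lemma allows $\gamma=1$, so the lower bound is $d-1$. The upper bound $\dim\le d-1$ holds trivially because $\partial D$ is a Lipschitz graph locally. This gives ultradensity.

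\textbf{Main obstacle.} The delicate point is the order of quantifiers: $\varepsilon$ must be small both for positivity, which depends on $r$, and for the exponent loss $C\varepsilon$, while the Frostman bound must hold uniformly in all small radii $\rho$. This is fine because the scaling lemma's constant does not depend on $\rho$. I also need that $\nu_\varepsilon$ is finite with total mass $\omega(\partial D)=1$, which is already established. Everything else is bookkeeping.
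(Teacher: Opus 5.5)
Your proposal is correct and follows essentially the same route as the paper: positivity of $\nu_\varepsilon$ on the ball from Lemma~\ref{lem:localization}, the Frostman-type bound $\nu_\varepsilon(\Delta_\rho(y))\lesssim\rho^{d-2+\gamma-C\varepsilon}$ from the preceding scaling lemma, and the mass distribution principle. You are slightly more careful than the paper, since you make the final limit $\varepsilon\to0$ explicit (the paper stops at $\dim\geq d-2+\gamma-c\varepsilon$) and you work with the Borel set $\{F<\infty\}$ instead of $\mathcal B_c'(u,D)$ directly.
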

	\begin{proof}
		For $x\in\partial D$ and $r>0$, set $A:=B(x,r)\cap\mathcal B_c'(u,\partial D)$. By Lemma \ref{lem:localization}, choose $\varepsilon>0$ small enough such that $\nu_\varepsilon(\Delta_r(x))>0$. Since $\nu_{\varepsilon}(\partial D\setminus \mathcal {B}_c'(u))=0$, we see that $\nu_\varepsilon(A)=\nu_\varepsilon(\Delta_r(x))>0$. Take any countable covering $(B_j)_j$ of $A$ with radii $r_j>0$ and centers on $\partial D$, then 
		\begin{align}\label{eq:CoveringEst}
			0<\nu_{\varepsilon}(\Delta_r(x))=\nu_{\varepsilon}(A)\leq\sum_j\nu_\varepsilon(B_j)\leq\sum_jr_j^{d-2+\gamma-c\varepsilon}.
		\end{align}
		\eqref{eq:CoveringEst} now readily implies $\dim(A)\geq d-2+\gamma-c\varepsilon$.
	\end{proof}

	\section{Boundedness of Jones' Variational Integral}\label{sec:LPVar}
	It is a standard argument that the integral of a harmonic function on a cone majorizes a weighted integral along the axis of the cone, see e.g. \cite[Chapter \Romannum{4}]{Stein70}. We adapt this consideration to our purpose.
	\begin{lemma}\label{lem:RadVar}
		Let $D$ be a star-shaped Lipschitz domain then there exists $\xi\in\partial D$ such that
		\begin{align}\label{eq:RadialVariation}
			\int_0^1\int_{\partial D}k(x_t,\xi)|\nabla u(\xi_t)|\mathrm d\omega(\xi)\mathrm dt<\infty
		\end{align}
		Moreover, the set $\mathcal B_{\mathrm{rad}}(u, D):=\lbrace\xi\in\partial D: \eqref{eq:RadialVariation} \text{ holds for }\xi\rbrace$ is dense in $\partial D$ and there exists $\gamma\in(0,1]$ such that $\dim(B\cap\mathcal B_{\mathrm{rad}}(u, D))\geq d-2+\gamma$ for any ball $B$ with center on $\partial D$. If $D$ is $C^{1,\mathrm{Dini}}$ we can choose $\gamma=1$, i.e. $\mathcal{B}_{\mathrm{Rad}}(u, D)$ is ultradense in $\partial D$.
	\end{lemma}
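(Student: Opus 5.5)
The plan is to dominate the radial expression in \eqref{eq:RadialVariation} at a point $x\in\partial D$ by the quantity $\sum_{s}2^{-s}(B_su_{2^{-s}})(x)$ from \eqref{eq:explicitBRep}. That sum was shown to be finite $\nu_\varepsilon$-almost everywhere in the proof of Theorem \ref{thm:MainThm3}. Once the domination holds, the set where \eqref{eq:RadialVariation} is finite has full $\nu_\varepsilon$-measure, so the statement follows from the same measures $\nu_\varepsilon$.

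\textbf{Step 1: dyadic decomposition of the $t$-integral.} Split $\int_0^1$ into the pieces $t\in[2^{-s},2^{-s+1}]$. For fixed $\zeta\in\partial D$, the function $y\mapsto k(y,\zeta)$ is positive and harmonic. The points $x_t$ with $t$ in this range lie on a segment at distance $\sim 2^{-s}$ from $\partial D$, of length $\sim 2^{-s}$ (this uses the radial cone $\Gamma_{\beta_0}(x)$). So Theorem \ref{thm:Harnack_ineq} gives $k(x_t,\zeta)\sim k(x_{2^{-s}},\zeta)$, uniformly in $\zeta$.

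\textbf{Step 2: replace point values by Whitney averages.} Each component of $\nabla u$ is harmonic, so $|\nabla u|$ is subharmonic. Hence
\begin{align*}
|\nabla u(\zeta_t)|\le \dashint_{B(\zeta_t,ct)}|\nabla u(z)|\,\mathrm dz
\end{align*}
for a small $c=c(D)$. Because $\zeta_t\in\Gamma_{\beta_0}(\zeta)$, the ball $B(\zeta_t,ct)$ lies in the discrete cone at $\zeta$, provided $\alpha$ is chosen small. It is then covered by boundedly many Whitney cubes from $\mathcal W_{s'}(\zeta)$ with $|s'-s|\le 2$, all of comparable volume $2^{-sd}$. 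This gives
\begin{align*}
|\nabla u(\zeta_t)|\lesssim \sum_{|s'-s|\le 2}\dashint_{W_{s'}(\zeta)}|\nabla u(z)|\,\mathrm dz .
\end{align*}
By Lemma \ref{lem:cy_properties}, each average on the right is $\sim (C_{s'}u_{2^{-s'}})(\zeta)$. Harnack also gives $k(x_{2^{-s}},\cdot)\sim k(x_{2^{-s'}},\cdot)$ for $|s-s'|\le2$.

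\textbf{Step 3: sum and invoke the earlier theorems.} Integrating in $t$ and $\zeta$ and summing over $s$, the expression in \eqref{eq:RadialVariation} at $x$ is
\begin{align*}
\lesssim \sum_{s}2^{-s}\int_{\partial D}k(x_{2^{-s}},\zeta)(C_su_{2^{-s}})(\zeta)\,\mathrm d\omega(\zeta)=\sum_s 2^{-s}(B_su_{2^{-s}})(x),
\end{align*}
up to finitely many initial terms, which are bounded trivially since $u$ and $\nabla u$ are bounded on compact subsets of $D$. By \eqref{eq:NuIntFinite}, the right-hand side is finite $\nu_\varepsilon$-a.e. for $\kappa=\omega$. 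Lemma \ref{lem:localization}, the upper bound $\nu_\varepsilon(\Delta_r(x))\le Cr^{d-2+\gamma-C\varepsilon}$ (with $\gamma=1$ in the $C^{1,\mathrm{Dini}}$ case), and the mass distribution argument then give density and the dimension bound $d-2+\gamma$ in every ball, exactly as for $\mathcal B_c'(u,D)$, letting $\varepsilon\to0$.

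\textbf{Main obstacle.} The delicate point is the geometric claim in Step 2. The balls $B(\zeta_t,ct)$ along the radial segment must be covered by cubes of $\mathcal W_{s'}(\zeta)$, at neighbouring scales, that actually belong to the discrete cone at the same $\zeta$. This needs $\alpha$ small relative to $\beta_0$ and the Lipschitz character. I would verify it directly from $\delta(\zeta_t)\sim t$, $|\zeta_t-\zeta|\sim t$ and the Whitney condition \eqref{eq:Whitney}. The index shift $|s'-s|\le2$ is then harmless because of the Harnack comparison.
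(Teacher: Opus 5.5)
Your proposal is correct and follows the paper's proof essentially step for step. You split the $t$-integral dyadically and use Harnack to replace $k(x_t,\cdot)$ by $k(x_{2^{-s}},\cdot)$. You bound $|\nabla u(\zeta_t)|$ by its average over a ball $B(\zeta_t,ct)$, which is the same as the paper's mean value property plus triangle inequality. You cover that ball by the sets $W_{s'}(\zeta)$ at boundedly many neighbouring scales, and then appeal to \eqref{eq:NuIntFinite} together with the localization and mass-distribution arguments.

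The only imprecision is the specific shift $|s'-s|\le 2$. It does not hold in general, because $\delta(\zeta_t)$ is comparable to $t$ only with constants depending on $\beta_0$ and the Lipschitz character. As in the paper, you should allow $|s'-s|\le N$ with $N=N(D,\alpha,\beta_0)$, and your Harnack resummation handles this equally well.
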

	\begin{proof}
		The lemma is proven once we have established the following inequality:
		\begin{align}\label{eq:Balayage}
			\int_0^1\int_{\partial D}k(x_t,\xi)|\nabla u(\xi_t)|\mathrm d\omega(\xi)\mathrm dt\leq C\sum_{s=0}^{\infty}2^{-s}\int_{\partial D}k(x_{2^{-s}},\xi)\dashint_{W_s(\xi)}|\nabla u(z)|\mathrm dz\mathrm d\omega(\xi)
		\end{align}
		Indeed, the right hand side of \eqref{eq:Balayage} is the integrand in \eqref{eq:NuIntFinite} (see \eqref{eq:explicitBRep}), thus the preceding considerations about the measure $\nu_\varepsilon$ imply the localized dimension estimates for the set $\mathcal{B}_{\mathrm{rad}}(u,\partial D)$.\par
		In order to prove \eqref{eq:Balayage}, we begin by observing that the left side in \eqref{eq:Balayage} is bounded from below and above by
		\begin{align}\label{eq:RadialAfterHarnack}
			\sum_{s=0}^{\infty}\int_{\partial D}k(x_{2^{-s}},\xi)\int_{2^{-s}}^{2^{-s+1}}|\nabla u(\xi_t)|\mathrm dt\mathrm d\omega(\xi)
		\end{align}
		due to Harnack. By the mean-value property of the harmonic functions $\partial_iu$ we have
		\begin{align}\label{eq:MeanVal}
			\nabla u(\xi_t)=\dashint_{B_t}\nabla u(z)\mathrm dz
		\end{align}
		where $B_t:=B(\xi_t,t\beta)$. Integrating \eqref{eq:MeanVal} we obtain 
		\begin{align}\label{eq:BallsInCube}
			\int_{2^{-s}}^{2^{-s+1}}|\nabla u(\xi_t)|\mathrm dt\leq\int_{2^{-s}}^{2^{-s+1}}\dashint_{B_t}|\nabla u(z)|\mathrm dz\mathrm dt
		\end{align}
		by the triangle inequality. Note that there exists $N=N(\alpha,\beta)\in\mathbb N$, but independent of $\xi$, such that
		\begin{align*}
			B_t\subset\bigcup_{i=-N}^N W_{s+i}(\xi) \quad\text{for } t\in(2^{-s},2^{-s+1}).
		\end{align*}
		Thus
		\begin{align*}
			\int_{2^{-s}}^{2^{-s+1}}\dashint_{B_t}|\nabla u(z)|\mathrm dz\mathrm dt&\lesssim \int_{2^{-s}}^{2^{-s+1}}t^{-d}\sum_{i=-N}^{N}\int_{W_{s+i}(\xi)}|\nabla u(z)|\mathrm dz\mathrm dt\\
			&\sim 2^{-s} \sum_{i=-N}^{N}\dashint_{W_{s+i}(\xi)}|\nabla u(z)|\mathrm dz
		\end{align*}
		Combining this estimate with \eqref{eq:BallsInCube} and \eqref{eq:RadialAfterHarnack} we obtain inequality \eqref{eq:Balayage}.
	\end{proof}
	\begin{figure}\label{fig:Cone}
		\centering
		\begin{tikzpicture}[scale=6]
			
			\def\H{1}          
			\def\theta{20}     
			\def\yc{0.4}       
			
			\pgfmathsetmacro{\R}{\yc*sin(\theta)}
			
			
			\draw[very thin,dashed] (0,0.5) rectangle (0.5,1);
			\draw[very thin,dashed] (0.5,0.5) rectangle (1,1);
			\node at (1.15,0.75){$W_{s-1}(\xi)$};
			\node at (-0.1,0.5){$2^{-s+1}$};
			
			\foreach \x in {0,0.25,0.5,0.75}
			\draw[very thin,dashed] (\x,0.25) rectangle ++(0.25,0.25);
			\node at (1.12,0.375){$W_{s}(\xi)$};
			\node at (-0.135,0.25){$2^{-s}$};
			
			\foreach \x in {0,0.125,...,0.875}
			\draw[very thin,dashed] (\x,0.125) rectangle ++(0.125,0.125);
			\node at (1.15,0.1875){$W_{s+1}(\xi)$};
			\node at (-0.1,0.125){$2^{-s-1}$};
			
			\foreach \x in {0,0.0625,...,0.9375}
			\draw[very thin, dashed] (\x,0.0625) rectangle ++(0.0625,0.0625);
			
			\draw[very thick] (0.5,0) -- ({0.5+\H*tan(\theta)},\H);
			\draw[very thick] (0.5,0) -- ({0.5-\H*tan(\theta)},\H);

			\filldraw[fill=gray!20,draw=black,thick]
			(0.5,\yc) circle (\R);
			
			\draw[thin] (0.5,0) -- (0.5,\H);
			
			\fill (0.5,\yc) circle (0.3pt);
			
			\node at (0.56,\yc) {$B_t$};
			
		\end{tikzpicture}
		\caption{The cone $\Gamma_{\beta_0}(\xi)$}
	\end{figure}
	\noindent\textbf{Remark: }Applying Lemma \ref{lem:RadVar}, subharmonicity of $|\nabla u|$ yields 
	\begin{align}\label{eq:RadInt2}
		\int_0^1|\nabla u(\xi_t)|\mathrm dt<\infty
	\end{align} 
	 for any $\xi\in\mathcal B_{\mathrm{rad}}(u,\partial D)$. Finiteness of \eqref{eq:RadInt2} is the primary subject of interest in \cite{MuellerRiegler2020} and \cite{FMR25} where the content of Lemma \ref{lem:RadVar} is proved using Havin and Mozolyako's original arguments. 
	 \\\\
	We conclude this section by proving the two main Theorems \ref{thm:MainThm1} and \ref{thm:MainThmLip}. Using coarea formula combined with Martin kernel estimates on $C^{1,\mathrm{Dini}}$ domains, we transform Jones' variational integral to the radial integral estimated in Lemma \ref{lem:RadVar}. The localized dimension estimates of Theorem \ref{thm:MainThm3} thus transfer directly.
	\begin{proof}[Proof of Theorem \ref{thm:MainThm1}]:\newline
		\textbf{Step 1:} Assume that $D$ is star-shaped. Define the function $r(z):=|z|/|\Phi(z/|z|)|$ and check that $r^{-1}(\lbrace t\rbrace)=t\partial D$, $t\in(0,1)$. Write $\theta=z/|z|$ and $\Phi=(\phi_j)_j$. Compute 
		\begin{align*}
			\partial_i r(z) = \frac{z_i}{|z||\Phi(\theta)|}-\frac{1}{|\Phi(\theta)|^3}\sum_j\phi_j(\theta)\sum_k(\partial_k\phi_j)(\theta)\left(\delta_{ik}-\frac{z_iz_k}{|z|^2}\right),
		\end{align*}
		so by orthogonality of the two vectors, $|\nabla r(z)|^2\geq1/|\Phi(\theta)|^2$. In total $|\nabla r|\sim_\Phi 1$, by our assumptions on the function $\Phi$.
		Applying the coarea formula yields
		\begin{align}\label{eq:Coarea}
			\begin{aligned}
				\int_{D}|\nabla u(z)|k(z,\xi)|\nabla r(z)|\mathrm{d}z&=\int_0^1\int_{t\partial D} |\nabla u(\zeta)|k(\zeta,\xi)\mathrm{d}\mathcal{H}^{d-1}(\zeta)\mathrm dt\\
				&=\int_0^{1}(1-t)^{d-1}\int_{\partial D}|\nabla u(\zeta_t)|k(\zeta_t,\xi)\mathrm{d}\mathcal{H}^{d-1}(\zeta)\mathrm{d}t
			\end{aligned}
		\end{align}
		Lemma \ref{lem:quasisymm} implies,
		\begin{align}\label{eq:quasisymm}
			k(\zeta_t,\xi)\sim k(\xi_t,\zeta)
		\end{align}
		for $t\leq r_0$. If $t>r_0$, we obtain \eqref{eq:quasisymm} by Harnack's inequality. Plugging \eqref{eq:quasisymm} into the right side of \eqref{eq:Coarea} and replacing surface measure by harmonic measure yields
		\begin{align}\label{eq:RadImpliesFull}
				\int_{D}|\nabla u(z)|k(z,\xi)\mathrm{d}z\sim \int_0^{1}\int_{\partial D}|\nabla u(\zeta_t)|k(\xi_t,\zeta)\mathrm{d}\omega(\zeta)\mathrm{d}t
		\end{align}
		by Lemma \ref{lem:harmeasVShausdorff}. The right side of \eqref{eq:RadImpliesFull} is finite for any $\xi\in\mathcal B_{\mathrm{rad}}(u,D)$, so $\mathcal B_{\mathrm{rad}}(u,D)\subset \mathcal B(u, D)$ implying the desired statements about ultradensity.\\\\
		\textbf{Step 2:} Assume $D$ is not star-shaped. This step is proved by covering a neighborhood of the boundary with finitely many star-shaped $C^{1,\mathrm{Dini}}$ domains and applying Step 1. \par 
		Recall from the proof of Theorem \ref{thm:MainThm2} that there exist scalars $t_0,b>0$ such that for any $x_0\in\partial D$, there exists a star-shaped $C^{1,\mathrm{Dini}}$ domain $D(x_0)\subset D$ satisfying $B(x_0,t_0)\cap D\subset D(x_0)$ and its center $z_0$ satisfies $\delta_D(z_0)\gtrsim bt_0$ (see figure \ref{fig:D'}). Let $B=B(x_0,r)$ be a ball with center $x_0\in\partial D$ and radius $r<t_0/8$. Our goal is to prove $\dim\mathcal B(u,D)\cap B=d-1$. Construct the domain $D(x_0)$ as above and call it $D_0$ with center $z_0$. By Step 1, $\dim \mathcal B(u,D_0)\cap B=d-1$. It suffices to prove that $\mathcal B(u,D_0)\cap B\subset \mathcal B(u, D)\cap B$ i.e. we have to prove that
		\begin{align}
			\forall \xi\in B\cap\partial D:\quad\int_{D_0}|\nabla u(z)| p_{D_0}(z,\xi)\mathrm dz<\infty\Rightarrow\int_{D}|\nabla u(z)|p(z,\xi)\mathrm dz<\infty,
		\end{align}
		where $p_{D_0}$ denotes the local Poisson kernel on the domain $D_0$. Fix some $\xi\in B\cap\partial D$ with
		\begin{align*}
			\int_{D_0}|\nabla u(z)| p_{D_0}(z,\xi)\mathrm dz<\infty.
		\end{align*}
		\textbf{Covering argument: }There exist $c=c(D)>0$ and $x_1,\dots x_N\in\partial D$ satisfying $|x_i-x_0|\geq3/4t_0$ such that
		\begin{align}
			B(x_0,t_0/2)\cup\bigcup_{i=1}^N B(x_i,t_0/2)\supset \lbrace z\in D:\delta(z)<c\rbrace.
		\end{align}
		This can be achieved by covering $\partial D\setminus B(x_0,3/4t_0)$ with finitely many balls of radius $t_0/K$, $K\gg1$. Then take the balls with the same centers but radius $t_0/2$. For each $x_i$ construct the star-shaped $C^{1,\mathrm{Dini}}$ domains $D_i=D(x_i)$ with centers $z_i\in D$, satisfying $\delta_D(z_i)>bt_0$. Note that by their definition, $B_i\cap D\subset D_i$, where $B_i:=B(x_i,t_0/2)$. Applying Step 1 to each of the $D_i$, $i>0$, there exist $\xi_i\in B_i\cap\partial D$ such that
		\begin{align}
			\int_{D_i}|\nabla u(z)|p_{D_i}(z,\xi_i)\mathrm dz<\infty.
		\end{align}
		Note that
		\begin{align}\label{eq:JonesSplitting}
			\int_{D}|\nabla u(z)|p(z,\xi)\mathrm dz&\leq\int_{B_0\cap D}|\nabla u(z)|p(z,\xi)\mathrm dz+ \sum_{i=1}^{N}\int_{B_i\cap D}|\nabla u(z)|p(z,\xi)\mathrm dz\\
			&+\int_{\lbrace\delta(z)\geq c\rbrace}|\nabla u(z)|p(z,\xi)\mathrm dz\label{eq:finiteIntegral}
		\end{align}
		\textbf{Behavior of local Poisson kernels:} We investigate the behavior of the local Poisson kernels $p_{D_i}$ and the global Poisson kernel $p$ on the sets $B_i\cap D$. We begin by bounding the left term in \eqref{eq:JonesSplitting}.
	 	By Theorem \ref{thm:BogdanLD}
		\begin{align}\label{eq:localMartinKernel}
			k^{z_0}_{D_0}(z,\xi)\sim\frac{\delta_{D_0}(z)}{|z-\xi|^d}|z_0-\xi|^{d-1}\sim k^{z_0}_{D}(z,\xi),
		\end{align}
		since $\delta_{D_0}(z)\sim\delta(z)$ for $z\in B_0\cap D$.
		Apply \eqref{eq:localMartinKernel} and recall the factorization 
		\begin{align*}
			p_{D_0}(z,\xi)=p_{D_0}(z_0,\xi) k^{z_0}_{D_0}(z,\xi)\sim p_{D_0}(z_0,\xi)k^{z_0}_{D}(z,\xi) = \frac{p_{D_0}(z_0,\xi)}{p(z_0,\xi)} p(z,\xi).
		\end{align*} 
		Therefore 
		\begin{align}
			\int_{B_0\cap D}|\nabla u(z)|p(z,\xi)\mathrm dz \sim \frac{p(z_0,\xi)}{p_{D_0}(z_0,\xi)} \int_{B_0\cap D}|\nabla u(z)|p_{D_0}(z,\xi)\mathrm dz<\infty
		\end{align}
		
		We now turn to the right term in \eqref{eq:JonesSplitting}. Fix $i=1,\dots,N$, and $z\in B_i\cap D$, then $|z-\xi|\gtrsim|z-\xi_i|$. This follows from the inequalities $|z-\xi_i|\leq t_0$ and 
		\begin{align}
			|z-\xi|\geq|x_i-x_0|-|z-x_i|-|\xi-x_0|\geq t_0/8
		\end{align}
		Moreover $|z_i-\xi|\leq\mathrm{diam}(D)$ and $|z_i-\xi_i|\geq\delta(z_i)\geq b t_0$, so in total $|z_i-\xi|\lesssim|z_i-\xi_i|$ Once again Theorem \ref{thm:BogdanLD} implies
		\begin{align}\label{eq:MartinKernelEstimateD_i}
			k_{D}^{z_i}(z,\xi)\sim\frac{\delta(z)}{|z-\xi|^d}|z_i-\xi|^{d-1}\lesssim \frac{\delta(z)}{|z-\xi_i|^d}|z_i-\xi_i|^{d-1}\sim k_{D_i}^{z_i}(z,\xi_i).
		\end{align}
		We use inequality \eqref{eq:MartinKernelEstimateD_i} in
		\begin{align*}
			p(z,\xi)=p(z_i,\xi) k^{z_i}_{D}(z,\xi)\lesssim p(z_i,\xi)k^{z_i}_{ D_i}(z,\xi_i) = \frac{p(z_i,\xi)}{p_{D_i}(z_i,\xi_i)} p_{D_i}(z,\xi_i),
		\end{align*}
		implying 
		\begin{align}
			\int_{B_i\cap D}|\nabla u(z)|p(z,\xi)\mathrm dz\lesssim \frac{p(z_i,\xi)}{p_{D_i}(z_i,\xi_i)}\int_{B_i\cap D}|\nabla u(z)|p_{D_i}(z,\xi_i)\mathrm dz<\infty.
		\end{align}
		This completes the proof since the integral in \eqref{eq:finiteIntegral} is finite.
	\end{proof}
	\noindent\textbf{Remark:} If $u$ were integrable on $D$, the proof of the second step would follow from splitting Jones' variational integral in two parts: a set where $p$ behaves like the local Poisson kernel and its complement where the decay of $p$ governs the growth of $|\nabla u|$.\\\\
	Practically the same proof as in Step 1 of the preceding proof works for the variant of Jones' variational integral $V_{\mathrm{Lip}}$ on star-shaped Lipschitz domains.
	\begin{proof}[Proof of Theorem \ref{thm:MainThmLip}]
		By the definition of $\mu$ we have
		\begin{align}\label{eq:LipVar}
			\int_D|\nabla u(z)|k(z,\xi)\frac{G(\xi_{\delta(z)},0)}{G(z,0)}\mathrm d\mu(z) =\int_0^1\int_{\partial D}|\nabla u(\zeta_t)|k(\zeta_t,\xi)\frac{G(\xi_t,0)}{G(\zeta_t,0)}\mathrm{d}\omega(\zeta)\mathrm{d}t
		\end{align}
		Applying Lemma \ref{lem:quasisymm} yields that the right side of \eqref{eq:LipVar} is bounded from above and below by
		\begin{align*}
			\int_0^1\int_{\partial D}k(\xi_t,\zeta)|\nabla u(\zeta_t)|\mathrm{d}\omega(\zeta)\mathrm{d}t.
		\end{align*}
		This integral is finite for any $\xi\in\mathcal B_{\mathrm{rad}}(u,\partial D)$.
	\end{proof}

	\section{Green Function Estimates on $C^{1,\mathrm{Dini}}$ Domains}\label{sec:GreenEstimates}
	$C^{1,\mathrm{Dini}}$ domains were first extensively studied by Widman \cite{Widmann67}. They are of particular interest because the regularity of their boundary is essentially minimal for the \textit{boundary point principle} to hold, see \cite{ApuNaz16} for a counterexample if the domain is convex and \cite{ApuNaz22} for an extensive survey on the topic. At the core of the \textit{boundary point principle} or \textit{Hopf-Oleinik lemma} is the inequality
	\begin{align}\label{eq:BPL}
		\liminf_{h\rightarrow0}\frac{u(x_0+hN_{x_0})}{h}>0
	\end{align}
	for any positive harmonic function $u$ and any boundary point $x_0$ with $u(x_0)=0$, where $N_{x_0}$ denotes the unit inward normal at $x_0$. Verifying inequality \eqref{eq:BPL} for the Green function, i.e. $\partial_N G(x,x_0)>0$ for any boundary point $x_0$ and any point $x$ in the domain, already implies the full boundary point principle \cite{ApuNaz22}.\par
	A function $\epsilon:[0,\infty)\rightarrow[0,\infty)$ is called Dini if it is nonnegative, strictly increasing, satisfies $\epsilon(0)=0$, and 
	\begin{align}
		\int_0\frac{\epsilon(t)}{t}\mathrm dt<\infty.
	\end{align}
 	It is convenient to write $x=(x',x_d)\in\mathbb R^d$ where $x_d$ is scalar.
 	
	A $C^1$ domain $D\subset\mathbb R^d$ belongs to the class $C^{1,\mathrm{Dini}}$ if there exists $\rho_0>0$ and a Dini function such that for every $\xi\in\partial D$ we can rotate and translate $\mathbb{R}^d$ so that $\xi$ is mapped to the origin and the relations
	\begin{align}\label{eq:IntDiniCond}
		B(0,\rho_0)\cap\left\lbrace x_d>|x'|\epsilon(|x'|)\right\rbrace\subset D\cap B(0,\rho_0)
	\end{align}
	and
	\begin{align}\label{eq:ExtDiniCond}
		B(0,\rho_0)\cap\left\lbrace x_d<-|x'|\epsilon(|x'|)\right\rbrace\subset D^c\cap B(0,\rho_0)
	\end{align}
	are satisfied. Fix a "center" $z_0\in D$ with $\delta(z_0)\geq2\rho_0$. Hunt \cite{Hunt78} showed that on $C^{1,\mathrm{Dini}}$ domains the inequalities
	\begin{align}\label{eq:HuntIneq}
		C^{-1}\leq\inf_{r>0}\frac{G(A_r(\xi),z_0)}{r}\leq \sup_{r>0}\frac{G(A_r(\xi),z_0)}{r}\leq C,
	\end{align}
	hold for a constant $C>0$ that is uniform in $\xi\in\partial D$. This observation readily implies that 
	\begin{align}\label{eq:GreenIneqOnLDDomains}
		C^{-1}\delta(z)\leq G(z,z_0)\leq C\delta(z)
	\end{align}
	if $|z-z_0|\geq\rho_0/8$ on $C^{1,\mathrm{Dini}}$ domains. Both inequalities in \eqref{eq:HuntIneq} were already proved by Widman \cite{Widmann67}. The lower inequality can be inferred from the proof of the Hopf-Oleinik lemma for $C^{1,\mathrm{Dini}}$ domains in \cite{Widmann67}. Recently, Torres-LaTorre \cite[Theorem 1.1]{Clara26} provided a quantitative lower bound on the size of the quotient in \eqref{eq:BPL}, in terms of the modulus of continuity $\omega$ of the domain's boundary.
	\begin{align*}
		\frac{u(\rho \vec{e_d})}{\rho}\geq \frac{1}{C} \frac{u(r\vec{e_d})}{r}\exp\left(-C\int_\rho^{2r}\frac{\omega(s)}{s}\mathrm ds\right),\quad 0<\rho<r<r_0
	\end{align*}
	In \cite{Bogdan2000}, Bogdan gave a precise description of the contribution of the terms $G(x,z_0)$ and $G(y,z_0)$ to the size of $G(x,y)$ on Lipschitz domains. Assume without loss of generality that $\rho_0\leq \text{diam} D/100$ and fix $z_1\in D$ with $|z_0-z_1|=\rho_0/8$. Bogdan's sharp estimates on the Green function of Lipschitz domains with character $(M,\rho_0)$ are as follows.
	\begin{theorem}\label{thm:Bogdan}
		Let $x,y\notin B(z_0,\rho_0/3)$ and set $\rho:=\max(\delta(x),\delta(y),|x-y|)$. Then
		\begin{align*}
			G(x,y)\sim_D\frac{G(x,z_0)G(y,z_0)}{G^2(A,z_0)}|x-y|^{2-d}
		\end{align*}
		where
		\begin{align*}
			A\in\mathscr{B}(x,y):=
			\begin{cases}
				\left\lbrace A\in D:B(A,M^{-1}\rho)\subset D\cap B(x,3\rho)\cap B(y,3\rho)\right\rbrace,\quad&\rho\leq \rho_0/32\\
				\lbrace z_1\rbrace,\quad&\rho>\rho_0/32
			\end{cases}
		\end{align*}
	\end{theorem}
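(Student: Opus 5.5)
This is Bogdan's sharp two-sided Green function estimate. I would prove it by combining the boundary Harnack principle (BHP) with the inverted-cone device, so that $G(x,y)$ is reduced to Harnack chains plus two boundary comparisons.

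The first step disposes of the large-scale case $\rho>\rho_0/32$, where $A=z_1$. If $|x-y|\gtrsim\rho_0$, then $G(\cdot,y)$ is harmonic away from $y$, and BHP compares it with $G(\cdot,z_0)$ near $x$. Symmetrically, $G(x,\cdot)$ is compared with $G(\cdot,z_0)$ near $y$. This gives $G(x,y)\sim G(x,z_0)G(y,z_0)$, with the constant $G^2(z_1,z_0)|x-y|^{2-d}\sim 1$ absorbed. If instead $|x-y|<\rho_0/32$ but $\delta(x)$ or $\delta(y)$ is larger than $\rho_0/64$, both points are well inside $D$ at comparable distance. In that case $G(x,y)\sim|x-y|^{2-d}$ by comparison with the fundamental solution, and $G(x,z_0)\sim G(y,z_0)\sim 1$ by Harnack.

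The main case is $\rho\le\rho_0/32$. I would split according to $|x-y|$ versus $\max(\delta(x),\delta(y))$.

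If $|x-y|\le\frac12\max(\delta(x),\delta(y))$, then $\rho\sim\delta(x)\sim\delta(y)$ and one may take $A$ near $x$. Interior estimates give $G(x,y)\sim|x-y|^{2-d}$, and Harnack gives $G(x,z_0)\sim G(y,z_0)\sim G(A,z_0)$, so the formula holds.

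If $|x-y|$ dominates, so $\rho\sim|x-y|$, I pass to local coordinates at a boundary point $Q$ near $x$ and $y$. The key step is to localize: the function $G(\cdot,y)$ is positive harmonic in $D\cap B(x,\rho/4)$ and vanishes on $\partial D$ there. I compare it with $G(\cdot,z_0)$ by BHP (Carleson-type comparison) on the Lipschitz box of size $\sim\rho$ around $x$. To do this I use an inverted cone with vertex below $Q$, which lets me reach a point $A_x$ at height $\sim\rho$ above $x$ while avoiding the singularity at $y$. This gives
\begin{align*}
G(x,y)\sim\frac{G(x,z_0)}{G(A_x,z_0)}\,G(A_x,y).
\end{align*}
Repeating the argument in the second variable at $y$ gives
\begin{align*}
G(A_x,y)\sim\frac{G(y,z_0)}{G(A_y,z_0)}\,G(A_x,A_y).
\end{align*}
Since $A_x$ and $A_y$ are at distance $\sim\rho$ from $\partial D$ and from each other, $G(A_x,A_y)\sim\rho^{2-d}\sim|x-y|^{2-d}$. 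Harnack chains, with length bounded in terms of $M$, give $G(A_x,z_0)\sim G(A_y,z_0)\sim G(A,z_0)$ for any admissible $A\in\mathscr B(x,y)$.

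I expect the main obstacle to be the uniform applicability of BHP in this last case. One must verify that the comparison region around $x$, of size $c\rho$ with $c=c(M)$, stays away from both poles $y$ and $z_0$. This is exactly where the inverted cone is needed, because $x$ and $y$ may lie at very different depths.

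A second obstacle is checking that every $A\in\mathscr B(x,y)$ is joined to $A_x$ by a Harnack chain whose length depends only on $M$. This should follow from $B(A,M^{-1}\rho)\subset D\cap B(x,3\rho)\cap B(y,3\rho)$ together with the Lipschitz graph structure at scale $\rho\le\rho_0/32$.
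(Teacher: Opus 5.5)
Your proposal is correct and follows the route the paper attributes to Bogdan: the paper gives no proof of its own and only names the boundary Harnack principle and inverted cones as the ingredients, and your sketch (boundary Harnack at scale $\sim\rho$ in each variable, Harnack chains, and the interior bound $G(A_x,A_y)\sim\rho^{2-d}$) is that argument. One slip needs fixing in the large-scale case: the thresholds $|x-y|<\rho_0/32$ and $\max(\delta(x),\delta(y))>\rho_0/64$ do not bound $\min(\delta(x),\delta(y))$ from below (put $y$ on the segment from $x$ to its nearest boundary point), so $G(x,y)\sim|x-y|^{2-d}$ can fail there; split instead at $|x-y|<\rho_0/64$, which forces $\min(\delta(x),\delta(y))>\rho_0/64>|x-y|$, and treat $|x-y|\ge\rho_0/64$ with your boundary Harnack argument.
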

	Specializing Bogdan's sharp estimates to $C^{1,\mathrm{Dini}}$ domains, we use \eqref{eq:GreenIneqOnLDDomains} to obtain a concise description of the size of $G$. Let $a\wedge b$ denote the minimum of two scalars $a,b$.
	\begin{theorem}\label{thm:BogdanLD}
		Let $D\subset\mathbb R^d$ be a $C^{1,\mathrm{Dini}}$ domain. Then
		\begin{align}\label{eq:GreenFuncEstLD}
			G(x,y)\sim|x-y|^{2-d}\left(\frac{\delta(x)}{|x-y|}\wedge 1\right)\left(\frac{\delta(y)}{|x-y|}\wedge 1\right)
		\end{align}
		and
		\begin{align}\label{eq:MartinKernelDiniDom}
			k^{z_0}(z,\xi)\sim\frac{\delta(z)}{|z-\xi|}\left(\frac{|z-\xi|}{|z_0-\xi|}\right)^{1-d}
		\end{align}
	\end{theorem}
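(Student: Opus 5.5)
We derive both estimates from Bogdan's sharp bounds in Theorem \ref{thm:Bogdan}, using the Hunt--Widman inequality \eqref{eq:GreenIneqOnLDDomains}. On a $C^{1,\mathrm{Dini}}$ domain this inequality replaces every factor $G(\cdot,z_0)$ appearing in Bogdan's formula by $\delta(\cdot)$. We first prove \eqref{eq:GreenFuncEstLD} for $x,y\notin B(z_0,\rho_0/3)$, then remove this restriction, and finally pass to the Martin kernel via \eqref{eq:MartinKernelIdentity}.

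\textbf{Step 1 (the main case).} Let $x,y\notin B(z_0,\rho_0/3)$ and $\rho=\max(\delta(x),\delta(y),|x-y|)$. Theorem \ref{thm:Bogdan} gives
\[
G(x,y)\sim \frac{G(x,z_0)G(y,z_0)}{G^2(A,z_0)}|x-y|^{2-d}.
\]
By \eqref{eq:GreenIneqOnLDDomains}, $G(x,z_0)\sim\delta(x)$ and $G(y,z_0)\sim\delta(y)$. For $\rho\le\rho_0/32$, the point $A\in\mathscr B(x,y)$ satisfies $\delta(A)\ge M^{-1}\rho$ and $A\in B(x,3\rho)$, so $\delta(A)\le\delta(x)+3\rho\le 4\rho$. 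Hence $\delta(A)\sim\rho$; since $|A-z_0|\gtrsim\rho_0$, also $G(A,z_0)\sim\rho$. This gives
\[
G(x,y)\sim\frac{\delta(x)\delta(y)}{\rho^2}|x-y|^{2-d}.
\]
It remains to check that $\delta(x)\delta(y)/\rho^2\sim(\delta(x)/|x-y|\wedge1)(\delta(y)/|x-y|\wedge1)$. Since $|\delta(x)-\delta(y)|\le|x-y|$, there are two cases.
\begin{itemize}
\item If $|x-y|\ge\delta(x)$ or $|x-y|\ge\delta(y)$, then $\rho\lesssim|x-y|$ and both sides are comparable.
\item If $|x-y|$ is smaller than both distances, then $\delta(x)\sim\delta(y)\sim\rho$, and both sides are $\sim1$.
\end{itemize}
For $\rho>\rho_0/32$ we have $A=z_1$ and $G(z_1,z_0)\sim1$, so $G(x,y)\sim\delta(x)\delta(y)|x-y|^{2-d}$. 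Since $\rho\sim1$ up to $\mathrm{diam}\,D$, the same identity holds.

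\textbf{Step 2 (points near $z_0$).} The difficulty is to remove the restriction on $x,y$ without losing sharpness. Suppose $x\in B(z_0,\rho_0/3)$.
\begin{itemize}
\item If $y$ is far from $x$, say $|x-y|\ge\rho_0/6$, then $\delta(x)\gtrsim\rho_0$ and $x$ can be Harnack-connected to a point $x'$ with $|x'-z_0|=\rho_0/2$ by a uniformly bounded chain. The harmonic function $G(\cdot,y)$ gives $G(x,y)\sim G(x',y)$, and Step 1 applies to $(x',y)$. The right side of \eqref{eq:GreenFuncEstLD} is unchanged up to constants, because $|x-y|\sim|x'-y|\sim1$ and $\delta(x)\sim\delta(x')\sim1$.
\item If both points lie in $B(z_0,\rho_0/2)$, they are at distance $\ge\rho_0$ from $\partial D$. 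There the Green function is comparable to the fundamental solution $|x-y|^{2-d}$: the difference is a harmonic function bounded on compacts. Both min-factors equal $1$ in this regime.
\end{itemize}

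\textbf{Step 3 (Martin kernel).} Use \eqref{eq:MartinKernelIdentity}:
\[
k(z,\xi)=\lim_{r\to0}\frac{G(A_r(\xi),z)}{G(A_r(\xi),z_0)}.
\]
Apply \eqref{eq:GreenFuncEstLD} with $x=A_r(\xi)$, so $\delta(x)\sim r$ and $|x-z|\to|\xi-z|$. For $r\ll|z-\xi|$,
\[
G(A_r(\xi),z)\sim|z-\xi|^{2-d}\,\frac{r}{|z-\xi|}\left(\frac{\delta(z)}{|z-\xi|}\wedge1\right).
\]
Moreover $\delta(z)\le|z-\xi|$ since $\xi\in\partial D$, so the minimum equals $\delta(z)/|z-\xi|$. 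Analogously,
\[
G(A_r(\xi),z_0)\sim |z_0-\xi|^{1-d}r\cdot\frac{\delta(z_0)}{|z_0-\xi|}\wedge1 \sim r|z_0-\xi|^{1-d},
\]
because $\delta(z_0)\ge2\rho_0$. Dividing, the factor $r$ cancels and
\[
k(z,\xi)\sim\frac{\delta(z)}{|z-\xi|}\left(\frac{|z-\xi|}{|z_0-\xi|}\right)^{1-d},
\]
which is \eqref{eq:MartinKernelDiniDom}. The only point to watch is that the implied constants from \eqref{eq:GreenFuncEstLD} are uniform in $r$, so that comparability survives the limit.
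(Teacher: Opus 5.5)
Your proposal is correct and follows essentially the paper's route. You use Bogdan's estimate with $G(\cdot,z_0)\sim\delta$ from \eqref{eq:GreenIneqOnLDDomains}, a case analysis in $\rho$, a Harnack/interior argument near $z_0$, and then insert the result into \eqref{eq:MartinKernelIdentity}. Your single check that $\delta(x)\delta(y)/\rho^2\sim\bigl(\frac{\delta(x)}{|x-y|}\wedge 1\bigr)\bigl(\frac{\delta(y)}{|x-y|}\wedge 1\bigr)$ neatly replaces the paper's nested subcases. Your split near $z_0$ also covers the case $x\in B(z_0,\rho_0/3)$, $y\in B(z_0,\rho_0)\setminus B(z_0,\rho_0/3)$, which the paper's case list skips; there you should take $x'$ on the side of $\partial B(z_0,\rho_0/2)$ opposite to $y$, so the Harnack chain stays away from the pole.
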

	\begin{proof}The proof of \eqref{eq:GreenFuncEstLD} consists of routinely checking a few cases.
		\begin{enumerate}
			\item Let $x,y\in B(z_0,\rho_0/3)$. It is well known that if $|x-y|\leq N\min\left(\delta(x),\delta(y)\right)$, $G(x,y)\sim_N|x-y|^{2-d}$. Since $\delta(x),\delta(y)\sim_D 1$, this case is done.
			\item Let $x\in B(z_0,\rho_0/3)$ and $y\notin B(z_0,\rho_0)$. Then it is not hard to see that by Harnack $G(x,y)\sim G(x',y)$ where $x'\in B(z_0,\rho_0)\setminus B(z_0,\rho_0/3)$. The next case 3 thus applies.
			\item Let $x,y\notin B(z_0,\rho_0/3)$ and assume wlog that $\delta(x)\leq\delta(y)$.
			\begin{enumerate}
				\item Assume $\rho\geq \rho_0/32$, then by Theorem \ref{thm:Bogdan} the inequalities
				\begin{align*}
					G(x,y)\sim\delta(y)\delta(x)|x-y|^{2-d}
				\end{align*} 
				hold. If $\rho=|x-y|$ then $|x-y|\sim_D 1$ implying \eqref{eq:GreenFuncEstLD}. Now assume $\rho=\delta(y)$, i.e. $\delta(y)\sim_D1$. If $\delta(x)\geq|x-y|$, as in step 1, $G(x,y)\sim|x-y|^{2-d}$, implying \eqref{eq:GreenFuncEstLD}. Else we have $\delta(x)\leq|x-y|\leq\delta(y)$. If $\delta(x)\geq\delta(y)/2$ then $\delta(x)\sim|x-y|\sim\delta(y)\sim 1$, otherwise, if $\delta(x)\leq\delta(y)/2$, then $|x-y|\geq\delta(y)-\delta(x)\geq\delta(y)/2$, i.e. $|x-y|\sim_D1$. In both cases \eqref{eq:GreenFuncEstLD} is satisfied.
				\item Assume $\rho\leq \rho_0/32$. By Theorem \ref{thm:Bogdan}
				\begin{align}\label{eq:InterestingCase}
					G(x,y)\sim_D\frac{\delta(x)\delta(y)}{\delta^2(A)}|x-y|^{2-d},
				\end{align}
				where $\delta (A)\sim\rho$ and $\max(|x-A|,|y-A|)\leq3\rho$. Then
				\begin{enumerate}
					\item Assume $\delta(y)\leq|x-y|=\rho$. We are done since \eqref{eq:InterestingCase} directly implies \eqref{eq:GreenFuncEstLD}.
					\item Assume $|x-y|\leq\delta(y)=\rho$. We further distinguish between two cases:
					\begin{enumerate}
						\item If $\delta(x)\geq|x-y|$ then as in Case 1, $G(x,y)\sim|x-y|^{2-d}$, as desired.
						\item Let $\delta(x)\leq|x-y|$ and observe that \eqref{eq:InterestingCase} reads as
						\begin{align*}
							G(x,y)\sim\frac{\delta(x)}{\delta(y)}|x-y|^{2-d}.
						\end{align*}
						If $\delta(x)\geq\delta(y)/2$ then $\delta(y)\sim |x-y|$. Otherwise $\delta(x)\leq\delta(y)/2$ and thus $|x-y|\geq\delta(y)-\delta(x)\geq\delta(y)$, i.e. again $|x-y|\sim\delta(y)$, which completes the proof.
					\end{enumerate}
				\end{enumerate}
			\end{enumerate}
		\end{enumerate}
		The proof \eqref{eq:MartinKernelDiniDom} is now a matter of inserting the inequalities \eqref{eq:GreenFuncEstLD} into the identity \eqref{eq:MartinKernelIdentity}.
	\end{proof}	
	For $C^{1,1}$ domains, the lower bound in \eqref{eq:GreenIneqOnLDDomains} was proven by Zhao \cite[Theorem 1]{Zhao}, while the upper bound was given by Widman \cite{Widmann67} for $C^{1,\mathrm{Dini}}$ domains. Another consequence of \eqref{eq:GreenIneqOnLDDomains} is that one can switch the variables of the Martin kernel in an appropriate sense.   
	\begin{lemma}\label{lem:quasisymm}
		Let $D$ be a Lipschitz domain with character $(\rho_0,M)$ and let $r<\rho_0$ and $\xi,\zeta\in\partial D$. Then
		\begin{align*}
			\frac{k(A_r(\xi),\zeta)}{k(A_r(\zeta),\xi)}\sim\frac{G(A_r(\xi),0)}{G(A_r(\zeta),0)}.
		\end{align*}
		In particular, if the domain is $C^{1,\mathrm{Dini}}$, $k(A_r(\xi),\zeta)\sim k(A_r(\zeta),\xi)$.
	\end{lemma}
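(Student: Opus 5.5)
The plan is to derive the quasi-symmetry directly from Bogdan's Martin kernel estimate, Theorem \ref{thm:BogdanMartin}, with pole $z_0=0$. Write $x:=A_r(\xi)$ and $y:=A_r(\zeta)$. Theorem \ref{thm:BogdanMartin} gives
\begin{align*}
k(x,\zeta)\sim\frac{G(x,0)}{G^2(A(x,\zeta),0)}|x-\zeta|^{2-d},\qquad k(y,\xi)\sim\frac{G(y,0)}{G^2(A(y,\xi),0)}|y-\xi|^{2-d}.
\end{align*}
Dividing these, the quotient $G(x,0)/G(y,0)$ is exactly the right-hand side of the claim. It therefore suffices to show two things: $|x-\zeta|\sim|y-\xi|$, and $G(A(x,\zeta),0)\sim G(A(y,\xi),0)$.

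For the distances, I would use $|x-\xi|\le r$, $|y-\zeta|\le r$ and $\delta(x),\delta(y)\ge M^{-1}r$. If $|\xi-\zeta|\le r$, then both $|x-\zeta|$ and $|y-\xi|$ lie between $\delta\ge M^{-1}r$ and $2r$, so they are comparable to $r$. If $|\xi-\zeta|>r$, the triangle inequality gives $|x-\zeta|\le 2|\xi-\zeta|$. In the other direction, $|x-\zeta|\ge \max(\delta(x),|\xi-\zeta|-r)\gtrsim|\xi-\zeta|$, and the same bounds hold for $|y-\xi|$. In every case $|x-\zeta|\sim|y-\xi|\sim \max(r,|\xi-\zeta|)=:R$.

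For the points $A$, the main work is comparing $A(x,\zeta)=A_{|x-\zeta|}(\zeta)$ with $A(y,\xi)=A_{|y-\xi|}(\xi)$. If $R\gtrsim r_0$, both points equal $z_1$ or lie at distance $\sim 1$ from the boundary, and Harnack gives comparability. Otherwise both points are at distance $\gtrsim R/M$ from $\partial D$, and their mutual distance is at most $|x-\zeta|+|\zeta-\xi|+|y-\xi|\lesssim R$. A Harnack chain in the Lipschitz domain (all points in a region of size $\sim R$ with distance $\sim R$ to the boundary, joined through the non-tangential region above $B(\xi,CR)$) then gives $G(A(x,\zeta),0)\sim G(A(y,\xi),0)$. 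This step needs care because the two $A$-points sit over different boundary points, although these are only $\lesssim R$ apart. I expect it to be the main technical obstacle, but it is standard on Lipschitz domains, using the uniformity of $A_\rho$ up to replacing $M$ by $M'$. The origin is far from these points since $D\subset B(0,1)$ and $c_\Phi>0$, so $G(\cdot,0)$ is harmonic there.

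Combining the three comparisons gives the first claim. For the $C^{1,\mathrm{Dini}}$ case, \eqref{eq:HuntIneq} yields $G(A_r(\xi),0)\sim r\sim G(A_r(\zeta),0)$. The right-hand side is therefore $\sim1$, which gives $k(A_r(\xi),\zeta)\sim k(A_r(\zeta),\xi)$.
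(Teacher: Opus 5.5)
Your proposal is correct and follows the paper's proof. Both divide the two instances of Bogdan's Martin kernel estimate (Theorem \ref{thm:BogdanMartin}) and use $|A_r(\xi)-\zeta|\sim|A_r(\zeta)-\xi|$ to cancel the distance factors. Both then compare the two $G(A(\cdot,\cdot),0)$ factors by Harnack's inequality, after bounding the separation of the two $A$-points by the common scale, with the large-scale case (where $A=z_1$) handled separately. Your explicit case check of the distance comparability and your appeal to \eqref{eq:HuntIneq} for the $C^{1,\mathrm{Dini}}$ consequence fill in steps the paper leaves implicit.
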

	\begin{proof}
		To begin, observe that $|A_r(\xi)-\zeta|\sim|A_r(\zeta)-\xi|$. We distinguish two cases. 
		\begin{enumerate}
			\item Let $|A_r(\xi)-\zeta|\gtrsim \rho_0$, then by Theorem \ref{thm:BogdanMartin}
			\begin{align*}
				\frac{k(A_r(\xi),\zeta)}{k(A_r(\zeta),\xi)}\sim\frac{G(A_r(\xi),0)}{G(A_r(\zeta),0)}\left(\frac{|A_r(\xi)-\zeta|}{|A_r(\zeta)-\xi|}\right)^{2-d} 
			\end{align*}
			\item	If $|A_r(\xi)-\zeta|\lesssim \rho_0$, Theorem \ref{thm:BogdanMartin} yields
			\begin{align*}
				\frac{k(A_r(\xi),\zeta)}{k(A_r(\zeta),\xi)}\sim\frac{G(A_r(\xi),0)}{G(A_r(\zeta),0)}\left(\frac{G(A_{|A_r(\zeta)-\xi|}(\xi),0)}{G(A_{|A_r(\xi)-\zeta|}(\zeta),0)}\right)^2\left(\frac{|A_r(\xi)-\zeta|}{|A_r(\zeta)-\xi|}\right)^{2-d}
			\end{align*} 
			Note that if the inequality
			\begin{align}\label{eq:HarnackMartinExchange}
				\left|A_{|A_r(\zeta)-\xi|}(\xi)-A_{|A_r(\xi)-\zeta|}(\zeta)\right|\lesssim \min\left(|A_r(\zeta)-\xi|,|A_r(\xi)-\zeta|\right)
			\end{align}
			holds true, the desired result is implied by Harnack's inequality. Adding and substracting $\xi$ and $\zeta$ in the left side of \eqref{eq:HarnackMartinExchange} and performing triangle inequality, the observation 
			\begin{align*}
				|\xi-\zeta|\leq|A_r(\xi)-\xi|+|A_r(\xi)-\zeta|\lesssim \delta(A_r(\xi))+|A_r(\xi)-\zeta|\lesssim |A_r(\xi)-\zeta|
			\end{align*}
			implies inequality \eqref{eq:HarnackMartinExchange}.
		\end{enumerate}
	\end{proof}
	
	We conclude this by summarizing the consequences of the preceding estimates for the Green function on harmonic measure and the Poisson kernel.
	\begin{lemma}\label{lem:harmeasVShausdorff}
		
		Let $D$ be a $C^{1,\mathrm{Dini}}$ domain with $\delta(0)\geq 2\rho_0$. There exists a constant $C=C(D)>0$, such that
		\begin{align}\label{eq:EquivalenceOfMeasures}
			C^{-1}\leq\frac{\omega(A)}{\mathcal{H}^{d-1}(A)}\leq C
		\end{align}
		for any Borel set $A\subset\partial D$.
		In particular, 
		\begin{align*}
			C^{-1}\leq\frac{k(z,\xi)}{p(z,\xi)}\leq C
		\end{align*}
		for all $z\in D$ and $\xi\in\partial D$.
	\end{lemma}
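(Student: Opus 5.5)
The plan is to compare $\omega=\omega^{0}$ with surface measure on small surface balls $\Delta_r(\xi)=B(\xi,r)\cap\partial D$ using Dahlberg's inequalities, and then pass to arbitrary Borel sets by a differentiation argument. The statement about the kernels will then follow by reading $k$ and $p$ as Radon--Nikodym derivatives.

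\textbf{Step 1: surface balls.} Fix $\xi\in\partial D$ and $r<\rho_0$. Lemma \ref{lem:Dahlberg} with pole $0$ (admissible since $\delta(0)\geq 2\rho_0$, so $0\notin B(\xi,2r)$ after shrinking $r$) gives
\begin{align*}
\omega(\Delta_r(\xi))\sim r^{d-2}G(A_r(\xi),0).
\end{align*}
Hunt's inequality \eqref{eq:HuntIneq} gives $G(A_r(\xi),0)\sim r$, uniformly in $\xi$; equivalently one can use \eqref{eq:GreenIneqOnLDDomains} together with $\delta(A_r(\xi))\sim r$. Hence $\omega(\Delta_r(\xi))\sim r^{d-1}$. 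A $C^1$ (in particular $C^{1,\mathrm{Dini}}$) boundary is locally a $C^1$ graph, so $\mathcal H^{d-1}(\Delta_r(\xi))\sim r^{d-1}$ uniformly for $r<\rho_0$. For $r\geq\rho_0$ both quantities are comparable to $1$ by compactness: cover $\Delta_r(\xi)$ by, and let it contain, a bounded number of balls of radius $\rho_0/2$. Therefore $\omega(\Delta_r(\xi))\sim\mathcal H^{d-1}(\Delta_r(\xi))$ for all $\xi$ and $r$.

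\textbf{Step 2: Borel sets.} Both $\omega$ and $\sigma:=\mathcal H^{d-1}|_{\partial D}$ are finite Radon measures on $\partial D$, and $\sigma$ is doubling by Step 1. Suppose first that $K$ is compact. For any open $U\supset K$, cover $K$ by balls $\Delta_{r}(\xi)\subset U$ with $\xi\in K$, and extract a finite disjoint subfamily whose fivefold enlargements cover $K$ (Vitali). Doubling of both measures together with Step 1 then gives $\omega(K)\lesssim\sum\omega(\Delta_{r_i})\sim\sum\sigma(\Delta_{r_i})\leq\sigma(U)$. Outer regularity of $\sigma$ yields $\omega(K)\lesssim\sigma(K)$, and the symmetric argument yields $\sigma(K)\lesssim\omega(K)$. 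Inner regularity extends both inequalities to all Borel $A$. This is the content of \eqref{eq:EquivalenceOfMeasures}; for null sets it should be read as mutual absolute continuity. In particular $h:=\mathrm d\omega/\mathrm d\sigma$ satisfies $C^{-1}\leq h\leq C$ $\sigma$-a.e., by the Lebesgue differentiation theorem applied on the doubling space $(\partial D,\sigma)$.

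\textbf{Step 3: kernels.} By definition $\mathrm d\omega^z=k(z,\cdot)\,\mathrm d\omega$ and $\mathrm d\omega^z=p(z,\cdot)\,\mathrm d\sigma$. Hence $p(z,\xi)=k(z,\xi)h(\xi)$, so $k/p=1/h\in[C^{-1},C]$. To get this for every $\xi$ rather than $\sigma$-a.e., I will use the pointwise limit representations of $k$ (via \eqref{eq:MartinKernelIdentity}) and of $p$ (as the inward normal derivative of the Green function, or as $\lim\omega^z(\Delta_r)/\sigma(\Delta_r)$). The ratio of these limits is $\lim_r\sigma(\Delta_r(\xi))/\omega(\Delta_r(\xi))$, which is bounded above and below by Step 1.

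The main obstacle is this pointwise-everywhere statement in Step 3, i.e.\ the existence of the limits defining $p$ and $k$ at every boundary point. If this is problematic, I would settle for choosing continuous representatives, available on $C^{1,\mathrm{Dini}}$ domains by Widman's regularity, which suffices for all uses in the paper. The quantitative core, Step 1, is an immediate consequence of Dahlberg's estimates combined with Hunt's inequality.
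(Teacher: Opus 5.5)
Your proposal is correct and follows essentially the paper's route: Dahlberg's estimate plus Hunt's inequality \eqref{eq:HuntIneq} give $\omega(\Delta_r)\sim r^{d-1}\sim\mathcal H^{d-1}(\Delta_r)$, a Vitali covering with doubling passes to general sets, and the identity $p(z,\cdot)=k(z,\cdot)\,\mathrm d\omega/\mathrm d\mathcal H^{d-1}$ (the paper's factorization $p(0,\xi)k(z,\xi)=p(z,\xi)$) handles the kernel ratio. Your passage through compact sets using outer and inner regularity justifies the extension to Borel sets more soundly than the paper's appeal to a $\cap$-stable generator, and your choice of continuous representatives addresses an a.e.-versus-everywhere point that the paper passes over.
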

	\begin{proof}
		For any $x\in\partial D$ and $0<r<\rho_0$ we have 
		\begin{align*}
			\omega(B(x,r)\cap\partial D)\sim r^{d-2} G(A_r(x),0)\sim\mathcal{H}^{d-1}(B(x,r))\frac{G(A_r(x),0)}{r}\sim\mathcal{H}^{d-1}(B(x,r)\cap \partial D),
		\end{align*}
		by Dahlberg's inequalities \eqref{eq:Dahlberg}. $\mathcal{A}:=\lbrace A\subset\partial D: A \text{ open } \rbrace$ is a $\cap$-stable generator of the Borel sets on $\partial D$, so by checking that $\mathcal H^{d-1}\sim\omega$ on $\mathcal A$, we obtain $\mathcal H^{d-1}\sim\omega$ on the Borel sets. Thus let $A\in \mathcal{A}$ and let $\lbrace B_j\rbrace$ be a countable collection of open balls with radius smaller than $\rho_0/10$ such that $B_j\subset A$ and $\cup_jB_j=A$. By Vitali's covering theorem we find a subcollection $\lbrace B_{j_i}\rbrace\subset \lbrace B_{j}\rbrace$ of pairwise disjoint balls  such that $\cup_jB_j\subset\cup_i 5B_{j_i}$. Then
		\begin{align*}
			\omega(A)\leq\sum_i\omega(5B_{j_i})\sim\sum_i\mathcal H^{d-1}(5B_{j_i})\sim \sum_i\mathcal H^{d-1}(B_{j_i})=\mathcal{H}^{d-1}\left(\cup_iB_{j_i}\right)\leq\mathcal{H}^{d-1}(A)
		\end{align*}
		The reverse inequality is similar. The second statement follows from \eqref{eq:EquivalenceOfMeasures} and the factorization $p(0,\xi)k(z,\xi)=p(z,\xi)$.		
	\end{proof}
	\newpage
	\printbibliography[heading=bibintoc, title={References}]
	\textsc{J. Fromherz, Institute of Analysis, Johannes Kepler University Linz, 				Altenberger Strasse 69, A-4040 Linz, Austria}
	
	\textit{E-mail address}: jakob.fromherz@jku.at
		
\end{document}